\title{A class of singularity of arbitrary pairs and log canonicalizations}
\author{Kenta Hashizume}
\date{2019/05/20, version 0.14}
\keywords{singularity of pairs, log canonicalization, log canonical criterion}
\subjclass[2010]{14J17, 14E30}
\address{Department of Mathematics, Graduate School of Science, 
Kyoto University, Kyoto 606-8502, Japan}
\curraddr{Graduate School of Mathematical Sciences, 
The University of Tokyo, 3-8-1 Komaba Meguro-ku Tokyo 153-8914, Japan}
\email{hkenta@ms.u-tokyo.ac.jp}
\newtheorem{thm}{Theorem}[section]
\newtheorem{lem}[thm]{Lemma}
\newtheorem{cor}[thm]{Corollary}
\newtheorem{prop}[thm]{Proposition}
\theoremstyle{definition}
\newtheorem{defn}[thm]{Definition}
\newtheorem{rem}[thm]{Remark}
\newtheorem{note}[thm]{Notation}
\newtheorem{exam}[thm]{Example}
\newtheorem*{ack}{Acknowledgments} 
\newtheorem*{divisor}{Divisors} 
\newtheorem*{sing}{Singularities of pairs} 
\newtheorem*{model}{Models}
\newtheorem{step}{Step}
\newtheorem{step2}{Step}
\newtheorem{step3}{Step}
\newtheorem*{claim*}{Claim}
\begin{document}

\maketitle

\begin{abstract}
We define a class of singularity on arbitrary pairs of a normal variety and an effective $\mathbb{R}$-divisor on it, which we call pseudo-lc in this paper. 
This is a generalization of the usual lc singularity of pairs and log canonical singularity of normal varieties introduced by de Fernex and Hacon. 
By giving examples of pseudo-lc pairs which are not lc or log canonical in the sense of de Fernex--Hacon's paper, we show that pseudo-lc singularity is a strictly extended notion of those singularities. 
We prove that pseudo-lc pairs admit a small lc modification. 
We also discuss a criterion of log canonicity. 
\end{abstract}

\tableofcontents

\section{Introduction}\label{sec1}

Throughout this paper we will work over the complex number field. 

In the birational geometry, we often deal with not only algebraic varieties but also pairs of an algebraic variety and a divisor. 
Pairs of a variety and a divisor naturally appear, for example, a curve and marked points, or an open variety and the boundary of its compactification. 
Even when we study geometric properties of higher-dimensional algebraic varieties, pairs can be a very powerful tool to work induction on dimension of varieties. 
When we deal with pairs $(X,\Delta)$, we usually assume that the log canonical divisor $K_{X}+\Delta$ is $\mathbb{R}$-Cartier. 
Using this property, we often compare log canonical divisors of two pairs which are birationally equivalent in a sense. 
For example, when we are given pairs $(X,\Delta)$ and $(X',\Delta')$ with a birational map $X\dashrightarrow X'$, we take a common resolution $f\colon Y\to X$ and $f'\colon Y\to X'$ of $X\dashrightarrow X'$ and compare $f^{*}(K_{X}+\Delta)$ and $f'^{*}(K_{X'}+\Delta')$. 
Some classes of pairs with $\mathbb{R}$-Cartier log canonical divisors and mild singularities, such as lc pairs, klt pairs, and so on (see \cite{kollar-mori}), are in particular important to study higher-dimensional algebraic varieties. 
In fact, a lot of important results in the birational geometry were proved in the framework of lc or klt pairs (for example, \cite{bchm}, \cite{fujinonon-van}, \cite{fujino-fund}, \cite{birkar-flip}, \cite{haconxu-lcc}, \cite{hmx}, \cite{birkar-bab}). 

It is difficult to carry out similar arguments on pairs whose log canonical divisors are not $\mathbb{R}$-Cartier. 
In \cite{dfh}, de Fernex and Hacon defined the pullback of arbitrary $\mathbb{Q}$-divisors. 
Using it, they defined relative log canonical divisors, multiplier ideal sheaves and classes of singularities on pairs $(X, \sum a_{i}Z_{i})$ of a normal quasi-projective variety $X$ and a formal $\mathbb{R}_{\geq 0}$-linear combination 
$\sum a_{i}Z_{i}$ of subschemes $Z_{i}\subset X$. 
They proved that multiplier ideal sheaves, log canonical pairs and log terminal pairs in the sense of \cite{dfh} have various properties similar to those on the usual pairs. 
For instance, they proved vanishing theorem of multiplier ideal sheaves and that log terminal singularities have only rational singularities.

In this paper, we study an extension of lc singularity. 
The purpose of this paper is to generalize lc singularity to a class of singularity of  pairs whose log canonical divisor is not necessarily $\mathbb{R}$-Cartier and to investigate relations between the new singularity and lc singularity or log canonical singularity introduced by \cite{dfh}. 

We deal with arbitrary pairs of a normal variety $X$ and an effective $\mathbb{R}$-divisor $\Delta$ on it, which we denote $\langle X,\Delta \rangle$ to distinguish them from pairs whose log canonical divisor is $\mathbb{R}$-Cartier. 
For any prime divisor $P$ over $X$, we define {\em discrepancy} of $P$ with respect to $\langle X,\Delta \rangle$, denoted by $\alpha(P,X,\Delta)$ in this paper (Definition \ref{defnalmostdiscrepancy}), and define {\em pseudo-lc} singularity by using it (Definition \ref{defnalmostpair}). 
We show that $\alpha(\,\cdot\,,X,\Delta)$ is a generalization of the usual discrepancy (Lemma \ref{lembasic}), and that the $b$-divisor defined with $\alpha(\,\cdot\,,X,\Delta)$ is a logarithmic analog of the relative canonical $b$-divisor as in \cite{bdff} (Theorem \ref{thmnefenvelope}). 
In particular, the class of pseudo-lc pairs contains the usual lc pairs and potentially lc pairs (see \cite[Definition 17]{kollar-logpluri}) as special cases. 
Also, we prove that pseudo-lc pairs are closely related to log canonical singularity in the sense of \cite{dfh} (Proposition \ref{propdfh}) and they appear in generalized lc pairs introduced in \cite{bz} (Proposition \ref{propgeneralizedlc}). 
By giving an example of pseudo-lc pairs which are not lc or log canonical in the sense of \cite{dfh} (Example \ref{examnotdfhlc}), we show that pseudo-lc singularity is a strictly extended notion of those singularities. 
Furthermore, for any pair with a boundary $\mathbb{R}$-divisor, we construct a log canonicalization which only extracts bad divisors measured by discrepancy. 
The following theorem is the main result of this paper.

\begin{thm}[=Theorem \ref{thmbirat}]\label{thmmain}
Let $\langle X,\Delta \rangle$ be a pair such that $\Delta$ is a boundary $\mathbb{R}$-divisor. 
Then, there is a projective birational morphism $h\colon W\to X$ from a normal variety $W$ such that
\begin{itemize}
\item
any $h$-exceptional prime divisor $E_{h}$ satisfies $\alpha(E_{h},X,\Delta)<-1$, 
\item
the reduced $h$-exceptional divisor $E_{\rm red}$ is $\mathbb{Q}$-Cartier, and
\item
if we put $\Delta_{W}=h_{*}^{-1}\Delta+E_{\rm red}$, then $K_{W}+\Delta_{W}$ is $\mathbb{R}$-Cartier and the pair $(W,\Delta_{W})$ is lc. 
\end{itemize}
\end{thm}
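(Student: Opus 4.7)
The plan is to obtain $W$ via a relative minimal model program (MMP) starting from a log resolution of $\langle X, \Delta\rangle$, the coefficients chosen so that the ``bad'' exceptional divisors carry coefficient $1$ (and so survive as lc places) while the ``good'' ones carry coefficient $<1$ (and so get contracted).

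\emph{Setup.} Choose a log resolution $f\colon Y\to X$ of $\langle X,\Delta\rangle$ so that $\Exc(f)\cup\Supp(f_{*}^{-1}\Delta)$ has simple normal crossings. Using Lemma \ref{lembasic} to evaluate $\alpha$ on divisors over $Y$, split the exceptional primes as $\Exc(f)=E^{<}+E^{\geq}$, where $E^{<}$ collects those $E_{i}$ with $\alpha(E_{i},X,\Delta)<-1$ and $E^{\geq}$ the remainder. For a small rational $0<\epsilon\ll 1$ set
\[
\Delta_{Y}\ =\ f_{*}^{-1}\Delta\ +\ E^{<}\ +\ (1-\epsilon)E^{\geq}.
\]
Since $Y$ is smooth and the support is snc, $(Y,\Delta_{Y})$ is a $\mathbb{Q}$-factorial dlt pair with $K_{Y}+\Delta_{Y}$ genuinely $\mathbb{R}$-Cartier, and it is klt along a neighborhood of $E^{\geq}$.

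\emph{MMP and output.} Run a $(K_{Y}+\Delta_{Y})$-MMP over $X$ with scaling of an $f$-ample divisor. Existence of flips and termination follow from the usual klt/dlt machinery (BCHM and its relative versions), because the klt locus surrounds the components one hopes to contract. Let $h\colon W\to X$ be the resulting minimal model. Every MMP step preserves $\mathbb{Q}$-factoriality and the dlt/lc property of the pair, so $W$ is $\mathbb{Q}$-factorial, $K_{W}+h_{*}\Delta_{Y}$ is $\mathbb{R}$-Cartier, and $(W,h_{*}\Delta_{Y})$ is lc. Once one knows that the MMP contracts exactly the components of $E^{\geq}$ and no component of $E^{<}$, the pushforward simplifies to $h_{*}\Delta_{Y}=h_{*}^{-1}\Delta+E_{\mathrm{red}}$ with $E_{\mathrm{red}}=h_{*}E^{<}$ the entire reduced exceptional divisor; conclusion (i) is then immediate, (ii) follows from $\mathbb{Q}$-factoriality of $W$, and (iii) is exactly the lc MMP output. (A limit $\epsilon\to 0$ may be invoked if one needs to remove the perturbation, but the output divisor set does not change for all sufficiently small $\epsilon$.)

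\emph{Main obstacle.} The crux is the identification of the contractions: the MMP must contract \emph{precisely} $E^{\geq}$ while preserving $E^{<}$. This is where Theorem \ref{thmnefenvelope} is indispensable: it realizes $\alpha(\,\cdot\,,X,\Delta)$ as the coefficient $b$-divisor of a logarithmic analog of the BdFF relative canonical envelope. Combining this with the negativity lemma applied on a common resolution of $Y\dashrightarrow W$ over $X$, one argues both directions. If some $E_{i}\in E^{<}$ were contracted, then the envelope characterization together with the nonnegativity of the exceptional discrepancy cycle of $Y\dashrightarrow W$ would force $\alpha(E_{i},X,\Delta)\geq -1$, a contradiction. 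Conversely, if some $E_{j}\in E^{\geq}$ survived on $W$, then reading off the envelope on $W$ via Theorem \ref{thmnefenvelope} (applied to the lc pair $(W,h_{*}\Delta_{Y})$, where the envelope is realized by $K_{W}+h_{*}\Delta_{Y}$ itself) would force $\alpha(E_{j},X,\Delta)<-1$, again a contradiction. Making this valuation-theoretic dichotomy rigorous, and carefully reconciling the definition of $\alpha$ with the birational geometry of the MMP, is the technical heart of the argument; the remaining verifications are routine dlt MMP bookkeeping.
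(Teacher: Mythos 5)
Your overall shape (log resolution, then a relative MMP over $X$ designed to contract exactly the exceptional divisors with $\alpha\geq-1$) matches the paper's, but two load-bearing steps fail. The most concrete problem is that your perturbation goes in the wrong direction. You give a divisor $E_{j}$ with $\alpha(E_{j},X,\Delta)\geq-1$ the coefficient $1-\epsilon$; but by Theorem \ref{proppairdiscrepancy} one only has $a(E_{j},X,\Delta+G)\leq\alpha(E_{j},X,\Delta)$ for every admissible $G$, and the supremum need not be attained. In the critical case $\alpha(E_{j},X,\Delta)=-1$ (which is unavoidable: whenever $\langle X,\Delta\rangle$ is pseudo-lc but not lc, the first bullet forces $W\to X$ to be small, so \emph{every} exceptional divisor of the resolution must be contracted, including those with $\alpha$ exactly $-1$), one gets $a(E_{j},X,\Delta+G)+(1-\epsilon)\leq-\epsilon<0$ for all $G$, so no negativity-lemma or envelope argument can force the contraction of $E_{j}$ — on the contrary. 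Concretely, in Example \ref{examnotlc} one computes $(K_{Y}+(1-\epsilon)S+f^{*}\Delta)|_{S}\equiv K_{X}+\Delta+\epsilon A$ under $S\simeq X$; choosing an instance where $-(K_{X}+\Delta)$ is nef, nonzero, and trivial on some curve $C_{0}$, this class is strictly positive on $C_{0}$, so $S$ survives on any minimal or log canonical model of your pair over $Z$, violating the first bullet. This is precisely why the paper keeps coefficient exactly $1$ on the divisors with $\alpha\geq-1$ (and instead lowers to $1-t$ the coefficient of those with $\alpha<-1$), and contracts the $\alpha=-1$ divisors not by negativity but by showing that $-(K+\Gamma''-tG'')$ restricted to them is pseudo-effective over $X$ via a limiting argument with discrepancy defects $\beta_{k}\to0$, and then combining this with semi-ampleness of the good minimal model.

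The second gap is the MMP itself. With the correct coefficients the pair has honest lc centers, $K_{X}+\Delta$ is not $\mathbb{R}$-Cartier, so one cannot write $K_{Y}+\Delta_{Y}$ as a pullback plus an effective exceptional divisor and reduce to BCHM; and your claim that the pair is klt near $E^{\geq}$ is false whenever components of $E^{\geq}$ meet $E^{<}$ or a coefficient-one component of $f_{*}^{-1}\Delta$. Termination here is exactly what the paper flags as the main difficulty: it requires the new Theorem \ref{thmrelmmp}, whose hypotheses (pseudo-effectivity of $-(K+\Delta)$ on normalizations of all lc centers) can only be verified after the special log resolution of Step \ref{step2.1}, built by extra blow-ups so that every lc center of $(Y,\Gamma-G)$ carries a valuation $Q$ with $\alpha(Q,X,\Delta)\geq-1$. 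Finally, the passage from the log canonical models of $(Y,\Gamma-tG)$ to $t=0$ uses Lemma \ref{lemlcmodel} together with ACC for log canonical thresholds to get lc-ness of $(W,\Delta_{W})$ at full coefficient $1$. None of these ingredients is present in your sketch, and the appeal to Theorem \ref{thmnefenvelope} does not substitute for them.
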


In the case of pseudo-lc pairs, we have the following theorem: 

\begin{thm}[see Theorem \ref{thmsmalllc}]\label{thmrellc}
Let $\langle X,\Delta \rangle$ be a pseudo-lc pair. 
Then, there is an lc modification $h\colon (W,\Delta_{W})\to X$ such that $h$ is small. 
\end{thm}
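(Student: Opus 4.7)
The plan is to deduce Theorem~\ref{thmrellc} directly from Theorem~\ref{thmmain}. The intuition is that, for a pseudo-lc pair, the first bullet of Theorem~\ref{thmmain} becomes vacuous: by Definition~\ref{defnalmostpair} the pseudo-lc condition is designed so that $\alpha(E,X,\Delta)\ge -1$ for every prime divisor $E$ over $X$, and therefore no $h$-exceptional prime divisor $E_h$ with $\alpha(E_h,X,\Delta)<-1$ can arise. Since the coefficient of any component of $\Delta$ is computed by $-\alpha$ at its own birational transform, the pseudo-lc hypothesis also forces $\Delta$ to be a boundary $\mathbb{R}$-divisor, which is exactly the hypothesis needed in order to invoke Theorem~\ref{thmmain}.

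Concretely, I would first apply Theorem~\ref{thmmain} to $\langle X,\Delta\rangle$. This produces a projective birational morphism $h\colon W\to X$ from a normal variety, together with $\Delta_W=h_*^{-1}\Delta+E_{\mathrm{red}}$, such that $(W,\Delta_W)$ is lc with $K_W+\Delta_W$ $\mathbb{R}$-Cartier, and such that every $h$-exceptional prime divisor $E_h$ satisfies $\alpha(E_h,X,\Delta)<-1$. Suppose for contradiction that $h$ admits an exceptional prime divisor $E_h$; then $\alpha(E_h,X,\Delta)<-1$ contradicts the pseudo-lc hypothesis. Hence $h$ has no exceptional divisor, i.e.\ $h$ is small, which forces $E_{\mathrm{red}}=0$ and therefore $\Delta_W=h_*^{-1}\Delta$.

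With $h$ small, $(W,h_*^{-1}\Delta)$ lc and $K_W+h_*^{-1}\Delta$ $\mathbb{R}$-Cartier, the morphism $h\colon(W,\Delta_W)\to X$ is by construction a small lc modification of $\langle X,\Delta\rangle$, which is the desired conclusion. The only real obstacle in this strategy is checking that the pseudo-lc hypothesis indeed rules out exceptional divisors with $\alpha<-1$; this reduces to a direct reading of Definition~\ref{defnalmostpair} combined with Definition~\ref{defnalmostdiscrepancy} and the comparison in Lemma~\ref{lembasic}. All of the genuine birational work has been pushed into Theorem~\ref{thmmain}, so once that comparison is in hand the proof is essentially a one-line consequence.
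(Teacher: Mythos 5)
Your reduction captures part of the paper's argument correctly: pseudo-lc does force $\Delta$ to be a boundary via Lemma \ref{lembasic} (ii), and the first bullet of Theorem \ref{thmmain} together with $\alpha(\cdot,X,\Delta)\geq -1$ does rule out $h$-exceptional divisors, so the $h$ produced there is small with $E_{\rm red}=0$. But there is a genuine gap at the last step: Definition \ref{defnlcmodification} requires $K_{W}+\Delta_{W}$ to be $h$-\emph{ample}, and the statement of Theorem \ref{thmmain} asserts only that $K_{W}+\Delta_{W}$ is $\mathbb{R}$-Cartier and $(W,\Delta_{W})$ is lc --- it says nothing about relative ampleness. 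A small morphism with $\mathbb{R}$-Cartier lc log canonical divisor need not be an lc modification (if $K_X+\Delta$ were already $\mathbb{R}$-Cartier, for instance, $K_W+\Delta_W$ would be the pullback and hence $h$-trivial on any contracted curve). The paper itself flags exactly this point: right after Corollary \ref{corsurface} it remarks that ``$(W,\Delta_{W})$ in Theorem \ref{thmbirat} is not an lc modification of $\langle X,\Delta \rangle$,'' and the accompanying Remark only guarantees $h$-ampleness of $K_W+\Delta_W-t'E_{\rm red}$ for some small $t'>0$, not of $K_W+\Delta_W$ itself.

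The fix is not to use Theorem \ref{thmmain} as a black box but to re-enter its proof, which is what the paper does in Theorem \ref{thmsmalllc}: for a pseudo-lc pair one takes a log resolution $f\colon Y\to X$ with $\Gamma=f_{*}^{-1}\Delta+(\text{reduced exceptional divisor})$, observes that the divisor $G$ of Step \ref{step2.1} vanishes, and then Steps \ref{step2.2}--\ref{step2.35} produce $W$ as the \emph{log canonical model of $(Y,\Gamma)$ over $X$}. Relative ampleness of $K_W+\Delta_W$ is then built into the construction by Definition \ref{defnlcmodel}, and smallness follows from the same discrepancy comparison you use. So your argument needs to be supplemented by this identification of $(W,\Delta_W)$ with the relative log canonical model (or some other proof of $h$-ampleness); as written, the conclusion that $h\colon(W,\Delta_W)\to X$ is an lc modification does not follow.
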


For definition of lc modification, see Definition \ref{defnlcmodification} (see also  \cite[Definition 18]{kollar-logpluri}). 
In fact, we prove a stronger result than Theorem \ref{thmrellc}, and with the result we discuss a sufficient condition of log canonicity for arbitrary pairs.  
The key ingredient of the proof of Theorem \ref{thmmain} and Theorem \ref{thmrellc} is the following theorem, a special kind of the relative log MMP. 

\begin{thm}[= Theorem \ref{thmrelmmp}]\label{thm1.3}
Let $\pi\colon X\to Z$ be a projective morphism of normal quasi-projective varieties, and let $(X,\Delta)$ be an lc pair. 
Suppose that
\begin{itemize}
\item
$-(K_{X}+\Delta)$ is pseudo-effective over $Z$, and 
\item 
for any lc center $S$ of $(X,\Delta)$ and its normalization $S^{\nu}\to S$, 
the pullback of $-(K_{X}+\Delta)$ to $S^{\nu}$ is pseudo-effective over $Z$.  
\end{itemize}
Then, $(X,\Delta)$ has a good minimal model or a Mori fiber space over $Z$. 
\end{thm}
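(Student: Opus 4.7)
I would reduce to the $\mathbb{Q}$-factorial dlt case by taking a dlt blowup $f\colon (Y,\Gamma)\to (X,\Delta)$ with $K_{Y}+\Gamma = f^{*}(K_{X}+\Delta)$. This preserves both hypotheses: $-(K_{Y}+\Gamma)$ is pseudo-effective over $Z$, and since every lc center of $(Y,\Gamma)$ maps onto an lc center of $(X,\Delta)$, the pullback of $-(K_{Y}+\Gamma)$ to the normalization of any lc center of $(Y,\Gamma)$ is pseudo-effective over $Z$. A good minimal model or Mori fiber space for $(Y,\Gamma)$ over $Z$ descends to one for $(X,\Delta)$, so I may work throughout with $(Y,\Gamma)$. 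I would then split into cases according to whether $K_{Y}+\Gamma$ is pseudo-effective over $Z$, and argue by induction on $\dim Y$.

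\textbf{Non-pseudo-effective case.} If $K_{Y}+\Gamma$ is not pseudo-effective over $Z$, I would run a $(K_{Y}+\Gamma)$-MMP over $Z$ with scaling of a general relatively ample $\mathbb{R}$-divisor. Existence of the required flips in the lc setting is known by the work of Birkar (and Hacon--Xu). For termination, I would combine special termination along the non-klt locus — which terminates because the inductive hypothesis, applied via adjunction to each lc center, provides a good minimal model or a Mori fiber space on that center — with the BCHM termination for the klt MMP on the complement of the non-klt locus. Since $K_{Y}+\Gamma$ is not pseudo-effective over $Z$, the MMP cannot terminate at a minimal model, and so it terminates at a Mori fiber space over $Z$.

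\textbf{Pseudo-effective case.} If $K_{Y}+\Gamma$ is pseudo-effective over $Z$, then combined with the hypothesis we obtain $K_{Y}+\Gamma\equiv 0$ over $Z$. For each lc center $T$ of $(Y,\Gamma)$ with normalization $T^{\nu}$, adjunction produces a dlt pair $(T^{\nu},\Gamma_{T^{\nu}})$ with $K_{T^{\nu}}+\Gamma_{T^{\nu}} = (K_{Y}+\Gamma)|_{T^{\nu}}$; both this divisor and its negative are pseudo-effective over $Z$, and its lc centers correspond to lc centers of $(Y,\Gamma)$ properly contained in $T$. Hence the inductive hypothesis in strictly smaller dimension yields a good minimal model for $(T^{\nu},\Gamma_{T^{\nu}})$, which, being numerically trivial over $Z$, forces $K_{T^{\nu}}+\Gamma_{T^{\nu}}$ to be $\mathbb{R}$-linearly trivial (hence semi-ample) over $Z$. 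I would then invoke the now-standard principle that an lc pair whose log canonical divisor is pseudo-effective over the base and admits a good minimal model after restriction to the normalization of every lc center itself admits a good minimal model over the base, concluding that $(Y,\Gamma)$ has a good minimal model over $Z$.

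\textbf{Main obstacle.} The pivotal difficulty is termination of the MMP in the non-pseudo-effective case: general lc termination is open, so the argument must extract termination from the combined force of pseudo-effectivity of $-(K_{Y}+\Gamma)$ and of its restrictions to all lc centers, using special termination fed by the induction and then a klt BCHM argument away from lc centers. The induction in the pseudo-effective case, by contrast, is essentially mechanical once adjunction is set up, though it depends on the relative version of the ``good models on lc centers $\Rightarrow$ good model globally'' statement, which must be carefully invoked in the form needed here.
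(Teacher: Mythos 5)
Your high-level strategy (dlt blow-up, induction on dimension via adjunction to lc centers, special termination, then abundance) is the same as the paper's, but the pseudo-effective case has a genuine gap. From $\pm(K_{Y}+\Gamma)$ both pseudo-effective over $Z$ you may only conclude that $K_{Y}+\Gamma$ has relative numerical dimension $0$, i.e.\ is numerically trivial on a \emph{general} fibre of $Y\to Z$; it need not be numerically trivial over $Z$ (a $\pi$-exceptional divisor already exhibits the difference). Consequently your claim that $(K_{Y}+\Gamma)|_{T^{\nu}}$ is pseudo-effective over $Z$ for every lc center $T$ does not follow: relative pseudo-effectivity is not preserved under restriction to a subvariety, and lc centers are not general. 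Producing exactly this statement is the technical heart of the paper's argument: it first arranges $K_{X}+\Delta\sim_{\mathbb{R},Z}G+H$ with $G,H\geq 0$ and ${\rm Supp}\,G\subset{\rm Supp}\llcorner\Delta\lrcorner$, runs an MMP with scaling producing models $X_{i}$ on which $K_{X_{i}}+\Delta_{i}+e_{i}H_{i}$ is nef over $Z$ with $e_{i}\to 0$, and only then obtains pseudo-effectivity of $(K+\Delta)$ and of $-(K+\Delta)$ restricted to the centers by restricting these nef divisors, comparing discrepancies through common resolutions and the negativity lemma, and passing to the limit (Steps \ref{step2} and \ref{step5} of the paper's proof).

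The second problem is the ``now-standard principle'' you invoke to conclude: no citable theorem of that form exists in the generality needed, and proving a workable version of it here is precisely the content of the proof. What is available is special termination, and to make it yield full termination one needs the MMP to act only inside $\llcorner\Gamma\lrcorner$ --- guaranteed by ${\rm Supp}\,G\subset{\rm Supp}\llcorner\Gamma\lrcorner$, so that no separate ``klt BCHM step on the complement'' is needed (nor is one available in the form you describe) --- together with log minimal models on the adjoint pairs of the centers, which the induction hypothesis supplies only after the pseudo-effectivity statements above are in place. Finally, semi-ampleness of the resulting minimal model is not automatic from numerical triviality on fibres: the paper passes to a projective lc closure (Step \ref{step0.5}) and applies the nef and log abundant criterion of Fujino--Gongyo (Lemma \ref{lemneflogabund}). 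Your non-pseudo-effective case is essentially unproblematic and is dispatched in one sentence in the paper.
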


The proof of Theorem \ref{thm1.3} is in Section \ref{sec3}, and the proof of the main result is in Section \ref{sec4}.  
To prove the main result for a given pair $\langle X,\Delta \rangle$, we take a log resolution $Y\to X$ of $\langle X,\Delta \rangle$, then we run a relative log MMP  for a lc pair $(Y,\Delta_{Y})$ and apply Theorem \ref{thm1.3} to construct a log canonical model of $(Y,\Delta_{Y})$ over $X$. 
In our situation, known results (for example, results in \cite{birkar-flip}, \cite{haconxu-lcc} and \cite{has-mmp}) 
are insufficient for the termination of the log MMP because lc centers of $(Y,\Delta_{Y})$ have only weak property. 
Therefore, we need to establish a new relative log MMP in more general setting. 
Theorem \ref{thm1.3} is suitable for our situation, and it plays a crucial role in the proof of the main result. 

We note that the notions of pseudo-lc pairs and lc pairs coincide in the case of surfaces (Corollary \ref{corsurface}), and pseudo-lc pairs in Example \ref{examnotlc} or Example \ref{examnotdfhlc} include threefolds. 
So a gap between pseudo-lc singularity and lc singularity or log canonical singularity in the sense of \cite{dfh} arises when the dimension of the variety is greater than $2$.  

By Theorem \ref{thmrellc}, we obtain two important theorem on pseudo-lc pairs.

\begin{thm}[=Theorem \ref{thmfinite}]
Let $\langle X,\Delta \rangle$ be a pseudo-lc pair such that $\Delta$ is a $\mathbb{Q}$-divisor. 
Then, the graded sheaf of $\mathcal{O}_{X}$-algebra $\bigoplus_{m\geq0}\mathcal{O}_{X}(\llcorner m(K_{X}+\Delta)\lrcorner)$ is finitely generated. 
If $X$ is projective and the minimal model theory holds, then the log canonical ring  
$\bigoplus_{m\geq0}H^{0}(X, \mathcal{O}_{X}(\llcorner m(K_{X}+\Delta)\lrcorner))$
is a finitely generated $\mathbb{C}$-algebra. 
\end{thm}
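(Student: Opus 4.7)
The plan is to reduce the finite generation question on the pseudo-lc pair $\langle X,\Delta\rangle$ to the corresponding question on a genuine lc pair with $\mathbb{Q}$-Cartier log canonical divisor, using Theorem \ref{thmrellc} as the bridge. Applying that theorem to $\langle X,\Delta\rangle$ furnishes a small lc modification $h\colon(W,\Delta_{W})\to X$. Since $\Delta$ is a $\mathbb{Q}$-divisor and $h$ is small, $\Delta_{W}=h_{*}^{-1}\Delta$ is also a $\mathbb{Q}$-divisor, and because $(W,\Delta_{W})$ is lc the divisor $K_{W}+\Delta_{W}$ is $\mathbb{Q}$-Cartier; by the definition of lc modification (Definition \ref{defnlcmodification}), it is moreover $h$-ample.

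Next I would transport the graded algebra across $h$. Smallness means there are no $h$-exceptional prime divisors, so $h_{*}^{-1}$ yields a bijection between prime Weil divisors on $X$ and on $W$ that commutes with round-downs of $\mathbb{Q}$-divisors. Choosing canonical divisors with $h_{*}K_{W}=K_{X}$, one has $h_{*}\llcorner m(K_{W}+\Delta_{W})\lrcorner=\llcorner m(K_{X}+\Delta)\lrcorner$ for all $m\geq 0$, and since $h_{*}\mathcal{O}_{W}(D)$ is reflexive for any Weil divisor $D$ on $W$ and agrees with $\mathcal{O}_{X}(h_{*}D)$ off a codimension-two subset of $X$, the two reflexive sheaves coincide. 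This gives an isomorphism of graded $\mathcal{O}_{X}$-algebras
\[
\bigoplus_{m\geq0}\mathcal{O}_{X}\bigl(\llcorner m(K_{X}+\Delta)\lrcorner\bigr)\;\cong\;h_{*}\!\bigoplus_{m\geq0}\mathcal{O}_{W}\bigl(\llcorner m(K_{W}+\Delta_{W})\lrcorner\bigr),
\]
and taking global sections identifies the log canonical rings in the projective case.

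For the first assertion, $K_{W}+\Delta_{W}$ is $h$-ample and $\mathbb{Q}$-Cartier, so some positive multiple $m_{0}(K_{W}+\Delta_{W})$ is an $h$-ample Cartier divisor. The classical fact that $\bigoplus_{k\geq0}h_{*}\mathcal{O}_{W}(km_{0}(K_{W}+\Delta_{W}))$ is a finitely generated $\mathcal{O}_{X}$-algebra for a relatively ample line bundle under a projective morphism, combined with finite generation of the full algebra from a Veronese subalgebra, yields the relative statement. For the second assertion, the isomorphism above reduces the projective case to finite generation of the log canonical ring of the projective lc pair $(W,\Delta_{W})$ with $\mathbb{Q}$-Cartier $K_{W}+\Delta_{W}$; this follows once $(W,\Delta_{W})$ admits a good minimal model or a Mori fiber space, which is exactly what the assumed minimal model theory provides. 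The only delicate step is the identification of sheaves in the second paragraph, and it rests entirely on the smallness of $h$ and the reflexivity of divisorial sheaves on the normal variety $X$; the rest is a standard consequence of ampleness and lc minimal model theory.
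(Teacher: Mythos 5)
Your proposal is correct and follows essentially the same route as the paper: the paper's proof consists of invoking Theorem \ref{thmsmalllc} to obtain the small lc modification and then citing \cite[Lemma 6.2]{kollar-mori} for the relative finite generation, which is precisely the Veronese/relative-ampleness argument you spell out by hand, and the projective case is handled identically via the identification of global sections across the small morphism.
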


\begin{thm}[=Theorem \ref{thmkodaira}, Kodaira type vanishing theorem]\label{thmvanish}
Let $\pi\colon X\to Z$ be a projective morphism of normal varieties and $\langle X,\Delta \rangle$ be a pseudo-lc pair. 
Let $D$ be a Weil divisor on $X$ such that $D-(K_{X}+\Delta)$ is $\pi$-ample. 

Then, $R^{i}\pi_{*}\mathcal{O}_{X}(D)=0$ for any $i>0$. 
\end{thm}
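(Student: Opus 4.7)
\medskip

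\noindent\textbf{Proof proposal.}
The strategy is to lift the problem to a small lc modification of $\langle X,\Delta\rangle$ and reduce it to the Koll\'ar--Fujino vanishing theorem for lc pairs.

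By Theorem~\ref{thmrellc} there is a small projective birational morphism $h\colon(W,\Delta_{W})\to X$ such that $(W,\Delta_{W})$ is lc and $K_{W}+\Delta_{W}$ is $\mathbb{R}$-Cartier and $h$-ample. Set $D_{W}=h_{*}^{-1}D$. Because $h$ contracts no divisor, one has $h_{*}\mathcal{O}_{W}(D_{W})=\mathcal{O}_{X}(D)$, and the $\mathbb{R}$-Cartier divisors
\[
D_{W}-(K_{W}+\Delta_{W})\quad\text{and}\quad h^{*}\bigl(D-(K_{X}+\Delta)\bigr)
\]
agree on $W$ (they coincide on the isomorphism locus of $h$ and both are $\mathbb{R}$-Cartier). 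Consequently $D_{W}-(K_{W}+\Delta_{W})$ is $(\pi\circ h)$-semiample, in particular $(\pi\circ h)$-nef and $(\pi\circ h)$-big, while $D_{W}=(K_{W}+\Delta_{W})+h^{*}(D-(K_{X}+\Delta))$ is the sum of an $h$-ample divisor and an $h$-numerically trivial one, hence $h$-ample.

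I would then apply the Koll\'ar--Fujino vanishing theorem for lc pairs to $(W,\Delta_{W})$, both relative to $\pi\circ h$ and relative to $h$, to obtain
\[
R^{j}h_{*}\mathcal{O}_{W}(D_{W})=0 \quad (j>0) \qquad \text{and} \qquad R^{i}(\pi\circ h)_{*}\mathcal{O}_{W}(D_{W})=0 \quad (i>0).
\]
The Leray spectral sequence for $\pi\circ h$, together with the push-forward identity $h_{*}\mathcal{O}_{W}(D_{W})=\mathcal{O}_{X}(D)$, then yields
\[
R^{i}\pi_{*}\mathcal{O}_{X}(D)=R^{i}\pi_{*}\bigl(h_{*}\mathcal{O}_{W}(D_{W})\bigr)=R^{i}(\pi\circ h)_{*}\mathcal{O}_{W}(D_{W})=0
\]
for every $i>0$, which is exactly the claim.

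The main obstacle is verifying the precise hypotheses of Fujino's vanishing, in particular the log bigness condition on $(W,\Delta_{W})$. Although $D_{W}-(K_{W}+\Delta_{W})$ is globally nef and big, it is the pullback under the small map $h$ of an ample divisor, so its restriction to a higher-codimensional lc center of $(W,\Delta_{W})$ that is contracted by $h$ may fail to be big. My intended workaround is a perturbation: by Kodaira's lemma write $D_{W}-(K_{W}+\Delta_{W})\sim_{\mathbb{R}}A+E$ with $A$ genuinely $(\pi\circ h)$-ample and $E$ effective, then absorb a small multiple of $E$ into the boundary so as to replace $(W,\Delta_{W})$ by a klt pair, where Kawamata--Viehweg vanishing applies without the log big constraint; an analogous perturbation handles the relative vanishing over $X$. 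The fact that $D$ is only a Weil divisor is a secondary point resolved by the identity $D_{W}=(K_{W}+\Delta_{W})+h^{*}(D-(K_{X}+\Delta))$, which exhibits $D_{W}$ as an $\mathbb{R}$-Cartier divisor on $W$ and reduces the vanishing to the standard setting.
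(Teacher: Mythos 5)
Your opening reduction coincides with the paper's: pass to the small lc modification $h\colon (W,\Delta_W)\to X$ of Theorem \ref{thmsmalllc}, note $h_*\mathcal{O}_W(D_W)=\mathcal{O}_X(D)$ and $D_W=K_W+\Delta_W+h^*A$ with $A=D-(K_X+\Delta)$. The gap is in how you then obtain the vanishing. The proposed workaround --- writing $D_W-(K_W+\Delta_W)\sim_{\mathbb{R}}A'+E$ with $A'$ ample and $E\geq 0$ and absorbing $\epsilon E$ into the boundary to reach a klt pair --- does not work here: $(W,\Delta_W)$ is only lc, and the effective divisor $E$ produced by Kodaira's lemma has no reason to avoid the non-klt locus of $(W,\Delta_W)$. (Its support must contain every curve contracted by $\pi\circ h$ on which $h^*A$ is numerically trivial, hence all of ${\rm Exc}(h)$, and in any case lc centers of $(W,\Delta_W)$ can sit anywhere, e.g.\ inside coefficient-one components of $\Delta_W$.) So $(W,\Delta_W+\epsilon E)$ is in general not even lc, and the reduction to Kawamata--Viehweg collapses; this is exactly why Kodaira-type vanishing for lc pairs is genuinely deeper than the klt case and is not provable by perturbation. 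A second unresolved point: $D_W$ is an integral Weil divisor that is merely $\mathbb{R}$-Cartier, so $\mathcal{O}_W(D_W)$ need not be invertible and no standard vanishing theorem applies to it directly on $W$.

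The paper resolves both issues at once: it takes a log resolution $g\colon Y\to W$, writes $K_Y+\Gamma=g^*(K_W+\Delta_W)+E$, rounds $g^*D_W+E$ down to a genuine Cartier divisor $L$ on the smooth $Y$ (absorbing the fractional, $g$-exceptional part into the boundary), and applies Fujino's vanishing theorem \cite[Theorem 5.6.2 (ii)]{fujino-book} to the resulting log smooth pair. Since the twist $L-(K_Y+{\rm boundary})=g^*h^*A$ is the pullback of a $\pi$-ample divisor from $X$, that theorem gives $R^i\pi_*R^j(h\circ g)_*\mathcal{O}_Y(L)=0$ for all $i>0$ and $j\geq 0$ with no log bigness hypothesis; taking $j=0$ and using $(h\circ g)_*\mathcal{O}_Y(L)=h_*\mathcal{O}_W(D_W)=\mathcal{O}_X(D)$ finishes the proof. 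This also removes your reliance on the Leray spectral sequence and on $R^jh_*\mathcal{O}_W(D_W)=0$ for $j>0$, which your argument additionally needs and which does not follow from relative Kawamata--Viehweg either, since $h^*A$ is only $h$-numerically trivial and log bigness over $X$ fails on lc centers contracted by $h$. To repair your outline, replace the perturbation step by the Ambro--Fujino torsion-freeness/vanishing theorem applied on a log resolution as above.
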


In Section \ref{sec5}, we study gaps between pseudo-lc and lc singularities in detail. 
As an application of Theorem \ref{thmsmalllc}, which is a strong version of Theorem \ref{thmrellc}, we prove the following theorem: 

\begin{thm}[=Theorem \ref{thm5.1}]\label{thmlccondition}
Let $X$ be a normal quasi-projective variety, and let $\Delta$ be a boundary $\mathbb{R}$-divisor. 
\begin{enumerate}
\item[(1)]
There is $\mathfrak{D}_{1}$ a finite set of prime divisors over $X$ such that if 
$${\rm sup}\{a(P,X,\Delta+G)|\,G\geq0, K_{X}+\Delta+G{\rm \; is\;}\mathbb{R}{\rm \mathchar`-Cartier}\}\geq-1$$ 
for all $P\in \mathfrak{D}_{1}$, then $\langle X,\Delta \rangle$ has a small lc modification. 
In particular, when $\Delta$ is a $\mathbb{Q}$-divisor, the graded sheaf of $\mathcal{O}_{X}$-algebra $\bigoplus_{m\geq0}\mathcal{O}_{X}(\llcorner m(K_{X}+\Delta)\lrcorner)$ is finitely generated. 
\item[(2)]
Suppose that $\langle X,\Delta \rangle$ has a small lc modification. 
Let $x \in X$ be a closed point. 
Then, there is $\mathfrak{D}_{2}$ a finite set of prime divisors over $X$ such that $K_{X}+\Delta$ is $\mathbb{R}$-Cartier and $( X,\Delta )$ is lc in a neighborhood of $x$ if and only if the following relation holds for any $P\in \mathfrak{D}_{2}$. 
\begin{equation*} \begin{split} &{\rm sup}\{a(P,X,\Delta+G)|\,G\geq0, K_{X}+\Delta+G{\rm \; is\;}\mathbb{R}{\rm \mathchar`-Cartier}\}\\ =&{\rm inf}\{a(P,X,\Delta-G')|\,G'\geq0, K_{X}+\Delta-G'{\rm \; is\;}\mathbb{R}{\rm \mathchar`-Cartier}\}. \end{split} \end{equation*}
\end{enumerate}
\end{thm}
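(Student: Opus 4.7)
For Part (1), the plan is to invoke Theorem \ref{thmbirat} to obtain a projective birational morphism $h\colon W\to X$ whose $h$-exceptional prime divisors are precisely those with $\alpha<-1$, with $(W,\Delta_W)$ lc and $K_W+\Delta_W$ $\mathbb{R}$-Cartier. I take $\mathfrak{D}_1$ to be this finite set of $h$-exceptional prime divisors. For any prime divisor $P$ over $X$ not in $\mathfrak{D}_1$, a direct computation using the lc-ness of $(W,\Delta_W)$ together with how $P$ sits on $W$ (as a component of $\Delta_W-E_{\rm red}$ or as a divisor over the lc pair $(W,\Delta_W)$) already yields $\alpha(P,X,\Delta)\ge -1$. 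Hence the hypothesis of (1) promotes the inequality $\alpha(\,\cdot\,,X,\Delta)\ge -1$ from $\mathfrak{D}_1$ to every prime divisor over $X$, meaning that $\langle X,\Delta\rangle$ is pseudo-lc. Theorem \ref{thmsmalllc} then produces a small lc modification, and when $\Delta$ is a $\mathbb{Q}$-divisor the finite generation of $\bigoplus_{m\geq 0}\mathcal{O}_X(\lfloor m(K_X+\Delta)\rfloor)$ follows from Theorem \ref{thmfinite}.

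For Part (2), the ``only if'' direction is immediate: if $K_X+\Delta$ is $\mathbb{R}$-Cartier on a neighborhood of $x$, one may take $G=G'=0$, so both the supremum and infimum are realized and equal the ordinary discrepancy $a(P,X,\Delta)$. For the ``if'' direction, I let $h\colon(W,\Delta_W)\to X$ be the small lc modification produced by Theorem \ref{thmsmalllc}, fix a log resolution $g\colon Y\to W$ of $(W,\Delta_W)$ over an open neighborhood of $h^{-1}(x)$, and define $\mathfrak{D}_2$ as the (finite) set of prime divisors on $Y$ whose image under $h\circ g$ contains $x$. Under the hypothesis sup $=$ inf on $\mathfrak{D}_2$, I would argue that for any admissible pair $G,G'\geq 0$ with $K_X+\Delta+G$ and $K_X+\Delta-G'$ both $\mathbb{R}$-Cartier near $x$, the coefficients of $h^*(K_X+\Delta+G)-h^*(K_X+\Delta-G')$ along each $P\in \mathfrak{D}_2$ must vanish. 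Since $\mathfrak{D}_2$ collects the divisorial valuations centered over $x$ on the chosen resolution, this forces $G+G'$ to be disjoint from a neighborhood of $x$, so that $K_X+\Delta$ itself is $\mathbb{R}$-Cartier near $x$. Once this is established, the smallness of $h$ identifies $h^*(K_X+\Delta)$ with $K_W+\Delta_W$, and the lc-ness of $(W,\Delta_W)$ transfers to $(X,\Delta)$ near $x$.

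The main obstacle is the ``if'' direction of (2): turning equality of suprema and infima along the finite set $\mathfrak{D}_2$ of test valuations into a genuine local $\mathbb{R}$-Cartier statement. This requires a criterion to the effect that a Weil $\mathbb{R}$-divisor is $\mathbb{R}$-Cartier near $x$ exactly when its ``divisorial defect'' vanishes along every divisorial valuation centered over $x$, together with the fact that the small lc modification provided by Theorem \ref{thmsmalllc} exhausts the relevant such valuations via a single dlt/log resolution, making $\mathfrak{D}_2$ finite. The rest of the argument — extracting pseudo-lc-ness in (1) and propagating lc-ness down from $(W,\Delta_W)$ in (2) — is comparatively routine once Theorem \ref{thmbirat} and Theorem \ref{thmsmalllc} are in hand.
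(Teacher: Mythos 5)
There are genuine gaps in both parts. For (1), your key step is the assertion that, once $h\colon W\to X$ from Theorem \ref{thmbirat} is in hand, the lc-ness of $(W,\Delta_W)$ gives $\alpha(P,X,\Delta)\geq -1$ for every prime divisor $P$ not in $\mathfrak{D}_1$, so that the hypothesis upgrades to pseudo-lc-ness. This does not work: writing $h^{*}(K_X+\Delta+G)=K_W+\Delta_W+N_G$ with $N_G\geq 0$, one only gets $a(P,X,\Delta+G)\leq a(P,W,\Delta_W)$, hence $\alpha(P,X,\Delta)\leq a(P,W,\Delta_W)$ --- the \emph{wrong} direction. A lower bound on $\alpha(P,X,\Delta)$ requires exhibiting divisors $G$ with $a(P,X,\Delta+G)$ close to $a(P,W,\Delta_W)$, which is exactly the hard content and is generally false (the paper is careful never to claim that a small lc modification implies pseudo-lc-ness). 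Moreover, with your choice $\mathfrak{D}_1=\{h\text{-exceptional divisors}\}$, every member satisfies $\alpha<-1$ by Theorem \ref{thmbirat}, so the hypothesis of (1) is unsatisfiable unless $\mathfrak{D}_1=\emptyset$, rendering the statement vacuous. The paper instead takes $\mathfrak{D}_1$ to be the $f$-exceptional divisors of a log resolution $f\colon Y\to X$ \emph{together with one divisor $E'_S$ with $a(E'_S,Y,\Gamma)=-1$ for each lc center $S$ of $(Y,\Gamma)$}; the hypothesis on this finite set is precisely what is needed to verify the two conditions of Theorem \ref{thmsmalllc} (via Theorem \ref{proppairdiscrepancy}), with no detour through pseudo-lc-ness.

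For (2), the ``if'' direction is where your argument breaks down. First, your $\mathfrak{D}_2$ (``prime divisors on $Y$ whose image under $h\circ g$ contains $x$'') is infinite unless you restrict to exceptional divisors. More seriously, the mechanism ``sup $=$ inf forces the coefficients of $(h\circ g)^{*}(G+G')$ along $\mathfrak{D}_2$ to vanish, hence $G+G'$ is disjoint from a neighborhood of $x$, hence $K_X+\Delta$ is $\mathbb{R}$-Cartier near $x$'' is not valid: the hypothesis only bounds a supremum against an infimum, so for individual $G,G'$ the relevant coefficients need not vanish, and even their vanishing along finitely many exceptional valuations would not force $G+G'$ to avoid $x$ (nor is ``defect vanishes along all valuations over $x$'' equivalent to being $\mathbb{R}$-Cartier near $x$ --- that is only a numerically Cartier condition, cf.\ Lemma \ref{lemnumercartier}). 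The paper's actual argument uses the hypothesis differently: the sandwich ${\rm sup}\leq a(P,W,\Delta_W)\leq {\rm inf}$ (valid because $h$ is small) together with ${\rm sup}={\rm inf}$ yields $a(P,W,\Delta_W)=\alpha(P,X,\Delta)$ for $P\in\mathfrak{D}_2$; then Theorem \ref{proppairdiscrepancy} provides divisors $G_n$ with errors $\beta_{n,P}\to 0$, and an intersection computation against a general curve $\xi\subset f^{-1}(x)$ gives $g^{*}(K_W+\Delta_W)\cdot\xi\leq 0$, contradicting the $h$-ampleness of $K_W+\Delta_W$ unless $h^{-1}(x)$ is a point. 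The $h$-ampleness of the lc modification is the crucial ingredient your proposal never uses.
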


Theorem \ref{thmlccondition} gives a way to check local log canonicity in two steps by using the usual discrepancies of finitely many prime divisors.  

As a corollary of Theorem \ref{thmlccondition}, we obtain a necessary and sufficient condition of log canonicity for pseudo-lc pairs. 

\begin{thm}[=Corollary \ref{corlccriterion}]\label{thm1.7}
Let $\langle X,\Delta \rangle$ be a pair such that $X$ is quasi-projective. 
Then, $K_{X}+\Delta$ is $\mathbb{R}$-Cartier and $( X,\Delta )$ is lc if and only if $\langle X,\Delta \rangle$ is pseudo-lc and the following equation holds for any prime divisor $P$ over $X$. 
\begin{equation*}
\begin{split}
&{\rm sup}\{a(P,X,\Delta+G)|\,G\geq0, K_{X}+\Delta+G{\rm \; is\;}\mathbb{R}{\rm \mathchar`-Cartier}\}\\
=&{\rm inf}\{a(P,X,\Delta-G')|\,G'\geq0, K_{X}+\Delta-G'{\rm \; is\;}\mathbb{R}{\rm \mathchar`-Cartier}\}.
\end{split}
\end{equation*} 
\end{thm}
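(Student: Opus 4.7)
\emph{Proof proposal.} Split the equivalence into two directions; the reverse direction is the substantive one, since the forward direction is a short computation.

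For the forward direction, assume $K_{X}+\Delta$ is $\mathbb{R}$-Cartier and $(X,\Delta)$ is lc. Then $\langle X,\Delta\rangle$ is pseudo-lc, as this class was designed to contain the usual lc pairs (see Lemma \ref{lembasic} and the discussion preceding Theorem \ref{thmmain}). Since $K_{X}+\Delta$ is $\mathbb{R}$-Cartier, any $G\geq 0$ making $K_{X}+\Delta+G$ $\mathbb{R}$-Cartier has $G$ itself $\mathbb{R}$-Cartier. Taking a log resolution $\mu\colon Y\to X$ that extracts $P$, the standard discrepancy formula gives $a(P,X,\Delta+G)=a(P,X,\Delta)-\mathrm{ord}_{P}(\mu^{*}G)\leq a(P,X,\Delta)$, with equality at $G=0$, and symmetrically $a(P,X,\Delta-G')\geq a(P,X,\Delta)$, with equality at $G'=0$. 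Hence the supremum and the infimum both equal $a(P,X,\Delta)$.

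For the reverse direction, assume $\langle X,\Delta\rangle$ is pseudo-lc and the displayed equality holds for every prime divisor $P$ over $X$. Theorem \ref{thmrellc} supplies a small lc modification of $\langle X,\Delta\rangle$, so the running hypothesis of Theorem \ref{thmlccondition}(2) is satisfied. Fix any closed point $x\in X$ and let $\mathfrak{D}_{2}$ be the finite set of prime divisors over $X$ produced by Theorem \ref{thmlccondition}(2) at $x$. By assumption the equality holds on every $P\in\mathfrak{D}_{2}$, so that theorem delivers $\mathbb{R}$-Cartierness of $K_{X}+\Delta$ and log canonicity of $(X,\Delta)$ in a neighborhood of $x$. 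Since $x$ was arbitrary and both conditions are local on $X$, they hold globally, which is what we wanted.

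The only conceivable obstacle is the transition from the hypothesis "pseudo-lc" to having Theorem \ref{thmlccondition}(2) available; this transition is exactly what Theorem \ref{thmrellc} is set up to perform, since that theorem produces a small lc modification from pseudo-lc-ness alone. Modulo that invocation, the argument is a direct assembly of the two theorems cited, together with one elementary discrepancy computation, and no additional MMP or termination input is needed.
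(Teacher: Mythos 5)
Your proof is correct and follows essentially the same route as the paper: the paper derives Corollary \ref{corlccriterion} directly from Theorem \ref{thm5.1}, using the forward-direction computation $\sup=\inf=a(P,X,\Delta)$ exactly as you do, and obtaining the small lc modification needed for Theorem \ref{thm5.1}(2) from pseudo-lc-ness (your appeal to Theorem \ref{thmrellc} is interchangeable with the paper's Theorem \ref{thm5.1}(1), since both rest on Theorem \ref{thmsmalllc}). The only detail worth making explicit is that pseudo-lc-ness forces $\Delta$ to be a boundary divisor (Lemma \ref{lembasic}(ii)), which is needed to invoke Theorem \ref{thmrellc} and Theorem \ref{thm5.1}.
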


We also give the proof of Theorem \ref{thm1.7} using the notion of numerically Cartier divisors (see \cite[Definition 5.2]{bdffu}). 
As we will see, Theorem \ref{thm1.7} can be regarded as an lc analog of \cite[Corollary 5.17]{bdffu}. 
We would like to remark that the proof is also an application of Theorem \ref{thm1.3} (or Lemma \ref{lemneflogabund}). 
For details, see Section \ref{sec5}. 

The contents of this paper are as follows: 
In Section \ref{sec2}, we collect definitions and some results on the log MMP. 
In Section \ref{sec3}, we show a special kind of the relative log MMP, which is a generalization of \cite[Theorem 1.1]{has-mmp}. 
In Section \ref{sec4}, which is the main part of this paper, we define pseudo-lc singularity and prove basic properties of pseudo-lc pairs, the main theorem and other results. 
In Section \ref{sec5}, we prove Theorem \ref{thmlccondition} and Theorem \ref{thm1.7}.

\begin{ack}
The author was partially supported by JSPS KAKENHI Grant Number JP16J05875. 
The topic of this paper came from a discussion with Professor Yuji Odaka. 
The author would like to thank him for answering questions, informing the author of the paper \cite{dfh}, and giving comments. 
The author is grateful to Professor Kento Fujita for fruitful discussions, answering questions and giving comments. 
The author is grateful to Professor Osamu Fujino for comments on previous version of  Theorem \ref{thmlccondition} and Theorem \ref{thm1.7}. 
The author thanks Professor J\'anos Koll\'ar for comments. 
\end{ack}

\section{Preliminaries}\label{sec2}
In this section, we collect definitions and some important theorems. 

\subsection{Definitions}
We collect some definitions. 

\begin{divisor}
Let $\pi\colon X\to Z$ be a projective morphism of normal varieties. 
We use the standard definition of $\pi$-nef $\mathbb{R}$-divisor, $\pi$-ample $\mathbb{R}$-divisor, $\pi$-semi-ample $\mathbb{R}$-divisor, $\pi$-big $\mathbb{R}$-divisor and $\pi$-pseudo-effective $\mathbb{R}$-divisor.
\end{divisor}

\begin{sing}
In this paper, we deal with two kinds of pairs. 

We recall definition of the usual pairs. 
A {\em sub-pair} $(X,\Delta)$ consists of a normal variety $X$ and an $\mathbb{R}$-divisor $\Delta$ such that $K_{X}+\Delta$ is $\mathbb{R}$-Cartier. 
When $\Delta$ is effective, we call $(X,\Delta)$ a {\em pair}. 
When coefficients of $\Delta$ belong to $[0,1]$, the divisor $\Delta$ is called a {\em boundary divisor}. 
When we write $(X,{\rm Supp}\Delta)$, we pay attention to $X$ and the support of $\Delta$. 
Therefore, $(X,{\rm Supp}\Delta)$ simply denotes a pair of a variety and a subscheme of pure codimension one. 

Let $(X,\Delta)$ be a sub-pair and let $P$ be a prime divisor over $X$, that is, a prime divisor on a normal variety $Y$ with a projective birational morphism $Y\to X$. 
Then, $a(P,X,\Delta)$ denotes the discrepancy of $P$ with respect to $(X,\Delta)$. 
When $(X,\Delta)$ is a pair, we use definitions of Kawamata log terminal (klt, for short) pair, log canonical (lc, for short) pair and divisorially log terminal (dlt, for short) pair as in \cite{kollar-mori}. 
An {\em lc center} of $(X,\Delta)$ is the image on $X$ of a prime divisor $P$ over $X$ satisfying $a(P,X,\Delta)=-1$. 

We also deal with arbitrary pairs of a normal variety $X$ and an effective $\mathbb{R}$-divisor $\Delta$ on it. 
When we do not assume that $K_{X}+\Delta$ is $\mathbb{R}$-Cartier, we denote the pair of $X$ and $\Delta$ by $\langle X,\Delta \rangle$ to distinguish from the usual pairs. 
\end{sing}

\begin{model}\label{deflogbir}
We use the definition of weak lc model, log minimal model, good minimal model and Mori fiber space as in \cite[Section 2]{birkar-flip}. 
We freely use the result of dlt blow-up for usual pairs (see, for example, \cite[Theorem 4.4.21]{fujino-book})
\end{model}

\begin{rem}\label{remmodels}
Let $(X,\Delta)$ be an lc pair and $(X',\Delta')$ be a log minimal model of $(X,\Delta)$. 
Let $(X'',\Delta'')$ be a $\mathbb{Q}$-factorial dlt pair such that $K_{X''}+\Delta''$ is nef, $X''$ and $X'$ are isomorphic in codimension one, and $\Delta''$ is the birational transform of $\Delta'$ on $X''$. 
Then, $(X'',\Delta'')$ is also a log minimal model of $(X,\Delta)$. 
Moreover, if $(X',\Delta')$ is a good minimal model of $(X,\Delta)$, then $(X'',\Delta'')$ is  a good minimal model of $(X,\Delta)$. 
\end{rem}

\begin{defn}[Log canonical model]\label{defnlcmodel}
Let $X\to Z$ be a projective morphism from a normal variety to a variety, and let $(X,\Delta)$ be an lc pair. 
A weak log canonical model $(X',\Delta')$ of $(X,\Delta)$ over $Z$ is a {\em log canonical model} if $K_{X'}+\Delta'$ is ample over $Z$. 
\end{defn}

\begin{defn}[Lc modification, {\cite[Definition 18]{kollar-logpluri}}]\label{defnlcmodification}
Let $\langle X,\Delta \rangle$ be a pair such that $\Delta$ is a boundary $\mathbb{R}$-divisor. 
Let $f \colon Y\to X$ be a projective birational morphism from a normal variety $Y$, and let $\Gamma$ be the sum of $f_{*}^{-1}\Delta$ and all $f$-exceptional prime divisors with coefficients $1$. 
Then the pair $\langle Y,\Gamma \rangle$ is an {\em lc modification} of $\langle X,\Delta \rangle$ if $K_{Y}+\Gamma$ is an $f$-ample $\mathbb{R}$-Cartier divisor and the pair $(Y,\Gamma)$ is lc. 
An lc modification $(Y,\Gamma)$ of $\langle X,\Delta \rangle$ is {\em small} if $f \colon Y\to X$ is small. 
\end{defn}

From definition, an lc modification is unique up to isomorhpism if it exists. 

\subsection{Results related to the log MMP}
In this subsection, we collect three results on the log MMP. 

In this paper, we use the following two results without any mention. 

\begin{thm}[{\cite[Theorem 4.1]{birkar-flip}}]\label{thmtermi}
Let $(X,\Delta)$ be a $\mathbb{Q}$-factorial lc pair such that $(X,0)$ is klt. 
Let $\pi\colon X \to Z$ be a projective morphism of normal quasi-projective varieties.
 
If there is a log minimal model of $(X,\Delta)$ over $Z$, any $(K_{X}+\Delta)$-MMP over $Z$ with scaling of an ample divisor terminates. 
\end{thm}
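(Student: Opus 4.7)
The plan is to combine the existence of the given log minimal model with the klt hypothesis on $(X,0)$ to reduce the termination statement to a termination result that is accessible by BCHM-type machinery, and then to rule out the remaining ``small scaling'' case by comparing the MMP with the minimal model via a common resolution.

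First I would set up the notation. Let the $(K_X+\Delta)$-MMP over $Z$ with scaling of an ample divisor $H$ be
\[
(X,\Delta)=(X_0,\Delta_0)\dashrightarrow(X_1,\Delta_1)\dashrightarrow\cdots,
\]
let $H_i$ be the birational transform of $H$ on $X_i$, and let $\lambda_i\geq 0$ be the associated scaling numbers, characterized by $K_{X_i}+\Delta_i+\lambda_i H_i$ being nef over $Z$ but trivial on the chosen extremal ray $R_i$. The sequence $\lambda_i$ is monotone non-increasing; set $\lambda:=\lim_{i\to\infty}\lambda_i$. Using the given log minimal model, after a dlt blow-up I may assume it is $\mathbb{Q}$-factorial dlt, call it $(Y,\Delta_Y)$, with $K_Y+\Delta_Y$ nef over $Z$. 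The klt hypothesis on $(X,0)$ is preserved on each $X_i$ because the MMP only improves singularities, so for every small $\epsilon>0$ the perturbed pair $(X_i,(1-\epsilon)\Delta_i)$ is klt.

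Suppose for contradiction the MMP does not terminate, and split into two cases. If $\lambda>0$, I would rescale $H$ so that I only need to handle a tail of the MMP for which $\lambda_i$ stays bounded away from $0$, and then show that this tail is in fact a $(K_{X_i}+(1-\epsilon)\Delta_i+\epsilon' H_i)$-MMP for suitable $\epsilon,\epsilon'>0$, i.e.\ an MMP on a klt pair equipped with the log minimal model obtained from $(Y,\Delta_Y)$ by adding the pushforward of $\epsilon' H$; termination then follows from the classical termination of MMP with scaling for klt pairs admitting a minimal model. The more delicate case is $\lambda=0$. Here I would take common log resolutions $p_i\colon W_i\to X_i$ and $q_i\colon W_i\to Y$ and use the negativity lemma to write $p_i^{\ast}(K_{X_i}+\Delta_i)=q_i^{\ast}(K_Y+\Delta_Y)+E_i$ with $E_i\geq 0$ and $p_i$-exceptional. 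Each flip or divisorial contraction strictly decreases $E_i$ along some valuation, and after finitely many steps no $p_i$-exceptional divisor with nontrivial support in $E_i$ can be an lc place of $(Y,\Delta_Y)$; a special termination argument restricted to the finitely many lc centers of $(Y,\Delta_Y)$ then forces the MMP to terminate.

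The main obstacle is the case $\lambda=0$: one cannot simply invoke BCHM termination because the perturbation argument of the previous case breaks down when the scaling numbers accumulate at zero. What makes it work is combining the discrepancy comparison $E_i\geq 0$ on a common resolution with special termination on the lc centers of the minimal model; controlling how the flips in the tail of the MMP affect divisors that are exceptional over $Y$, and then checking that the klt assumption on $(X,0)$ rules out infinitely many flips centered on non-lc places, is where the technical heart of the argument lies.
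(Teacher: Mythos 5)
First, a remark on the comparison you asked for: the paper does not prove this statement at all. It is quoted verbatim from Birkar \cite{birkar-flip} (Theorem 4.1 there) and used as a black box; the paper says explicitly that it will be used ``without any mention.'' So the only meaningful benchmark is the proof in the cited reference, and your outline does follow its global strategy: split on $\lambda=\lim\lambda_i$, reduce the case $\lambda>0$ to klt termination by perturbation using the klt-ness of $(X,0)$ and the ampleness of $H$, and attack the case $\lambda=0$ by comparing against the given log minimal model and invoking special termination.

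There are, however, genuine gaps in both cases as written. For $\lambda>0$, the inequality $(K_{X_i}+(1-\epsilon)\Delta_i+\epsilon' H_i)\cdot R_i<0$ is not automatic: you control $(K_{X_i}+\Delta_i)\cdot R_i<0$ and $H_i\cdot R_i>0$, but $\Delta_i\cdot R_i$ may be negative, so subtracting $\epsilon\Delta_i$ can destroy negativity on the ray. The correct perturbation is numerical: each step in the tail is a step of the $(K_{X_i}+\Delta_i+\tfrac{\lambda}{2}H_i)$-MMP, since $(K_{X_i}+\Delta_i+\tfrac{\lambda}{2}H_i)\cdot R_i=(\tfrac{\lambda}{2}-\lambda_i)\,H_i\cdot R_i<0$, and one then replaces $\epsilon\Delta+\tfrac{\lambda}{2}H$ by a general ample $\mathbb{R}$-divisor in its $\mathbb{R}$-linear equivalence class to land on a klt pair with big boundary, for which termination with scaling is unconditional by \cite{bchm} (note also that $(Y,\Delta_Y+\epsilon' H_Y)$ is not automatically a log minimal model of $(X,\Delta+\epsilon' H)$, so the route through the given minimal model is itself not immediate). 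For $\lambda=0$, the observation that each flip strictly increases some discrepancy, hence strictly decreases $E_i$ along some valuation, is true but does not yield termination: the valuations range over an infinite set, and this monotonicity is exactly what is available in general, where termination of flips is open. Moreover $E_i$ is exceptional over $Y$, not over $X_i$ (a divisor living on $X_i$ but contracted by $X\dashrightarrow Y$ can carry a positive coefficient), and special termination applies to the lc centers of the pairs $(X_i,\Delta_i)$ occurring in the MMP, not of $(Y,\Delta_Y)$; since $(X,\Delta)$ is only lc, one must first pass to dlt modifications and lift the MMP, then run the induction on dimension, and finally still dispose of the flips whose loci avoid the non-klt locus by a separate klt termination-with-scaling input. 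That endgame is where the actual content of Birkar's theorem lies, and the proposal does not supply it.
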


\begin{lem}[{\cite[Lemma 2.15]{has-mmp}}]\label{lembirequiv}
Let $\pi\colon X \to Z$ be a projective morphism of normal quasi-projective varieties, and let $(X,\Delta)$ be an lc pair. 
Let $(Y,\Gamma)$ be an lc pair such that there is a projective birational morphism $f\colon Y\to X$ and we can write $K_{Y}+\Gamma=f^{*}(K_{X}+\Delta)+E$ with an $f$-exceptional divisor $E\geq0$. 

Then, $(X,\Delta)$ has a weak lc model (resp.~a log minimal model, a good minimal model) over $Z$ if and only if  $(Y,\Gamma)$ has a weak lc model (resp.~a log minimal model, a good minimal model) over $Z$. 
\end{lem}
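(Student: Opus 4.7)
My plan is to establish both implications by combining common log resolutions, the negativity lemma, and standard MMP arguments.

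$(\Leftarrow)$ Suppose $(Y,\Gamma)$ admits a log minimal model $(Y',\Gamma')$ over $Z$ via $\phi\colon Y\dashrightarrow Y'$, and set $\psi := \phi\circ f^{-1}\colon X\dashrightarrow Y'$. The natural candidate for a log minimal model of $(X,\Delta)$ is not literally $(Y',\Gamma')$, since the coefficients in $\Gamma'$ of $\psi^{-1}$-exceptional divisors may be strictly less than $1$; instead, define $\Delta'$ on $Y'$ as $\psi_{*}\Delta$ plus the reduced sum of $\psi^{-1}$-exceptional prime divisors, so that $\Delta' - \Gamma' = \sum(1-\gamma_D)\phi_{*}D \geq 0$ is an effective $\psi^{-1}$-exceptional divisor (the sum running over $f$-exceptional $D\subset Y$ with $\phi_{*}D\neq 0$, where $\gamma_D$ is the coefficient of $D$ in $\Gamma$). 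On a common resolution $p\colon W\to Y$, $g\colon W\to Y'$, $q := f\circ p\colon W\to X$, the negativity lemma for the MMP $\phi$ gives $p^*(K_Y+\Gamma) = g^*(K_{Y'}+\Gamma') + F$ with $F \geq 0$ and $g$-exceptional; substituting the hypothesis yields
$$q^*(K_X+\Delta) = g^*(K_{Y'}+\Gamma') + F - p^*E,$$
where $p^*E$ is $q$-exceptional because $E$ is $f$-exceptional. Running a $(K_{Y'}+\Delta')$-MMP over $Z$ with scaling of an ample divisor (after passing to a dlt modification if necessary) then contracts the correction $\Delta' - \Gamma'$ and produces a log minimal model of $(X,\Delta)$ over $Z$, with termination ensured by Theorem \ref{thmtermi}.

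$(\Rightarrow)$ Suppose $(X',\Delta')$ is a log minimal model of $(X,\Delta)$ over $Z$. Take a common log resolution $\mu\colon\tilde Y\to Y$ that also resolves $Y\dashrightarrow X'$, and set $\tilde\Gamma := \mu_{*}^{-1}\Gamma + \sum_j E_j$, summing over the $\mu$-exceptional prime divisors. Then $(\tilde Y,\tilde\Gamma)$ is $\mathbb{Q}$-factorial dlt and $K_{\tilde Y}+\tilde\Gamma = \mu^*(K_Y+\Gamma) + \tilde E$ with $\tilde E \geq 0$ and $\mu$-exceptional. By the already-established $(\Leftarrow)$ direction applied to $\mu$, it suffices to construct a log minimal model of $(\tilde Y,\tilde\Gamma)$ over $Z$. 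Using the induced morphism $\nu\colon\tilde Y\to X'$, one writes $K_{\tilde Y}+\tilde\Gamma = \nu^*(K_{X'}+\Delta') + \mathcal{F}$ with $\mathcal{F} \geq 0$ and $\nu$-exceptional, so running a $(K_{\tilde Y}+\tilde\Gamma)$-MMP over $X'$ with scaling of an ample divisor contracts $\mathcal{F}$ and terminates by Theorem \ref{thmtermi}, producing the required model.

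The main obstacle is the $(\Leftarrow)$ direction: $f$-exceptional divisors in $\Gamma$ that $\phi$ does not contract but whose coefficients are strictly less than $1$ prevent $(Y',\Gamma')$ itself from serving as a log minimal model of $(X,\Delta)$ in the standard sense, necessitating the auxiliary MMP described above. Once termination is in hand, the weak lc and good minimal model cases follow by the same scheme, with nefness (respectively semi-ampleness) over $Z$ preserved at each step.
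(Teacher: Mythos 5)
This lemma is not proved in the paper at all: it is imported verbatim from \cite[Lemma 2.15]{has-mmp} and used as a black box, so there is no in-paper argument to compare yours against. Judged on its own, your proposal has the right architecture (reduce both directions to auxiliary relative MMPs after adjusting boundaries on a common model, and correctly identifies the real obstruction, namely the $f$-exceptional divisors with coefficient $<1$ in $\Gamma$ that survive to $Y'$), and the $(\Rightarrow)$ direction is essentially sound once one checks that $\mathcal{F}\geq 0$ is genuinely $\nu$-exceptional (this does hold, but it uses that a weak lc model of $(X,\Delta)$ can only extract divisors $P$ with $a(P,X,\Delta)=-1$, which forces ${\rm coeff}_{P}\Gamma=1$; you should make that verification explicit).

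The genuine gap is in the termination step of the $(\Leftarrow)$ direction. Theorem \ref{thmtermi} only guarantees termination of a $(K_{Y'}+\Delta')$-MMP with scaling \emph{given} that $(Y',\Delta')$ already has a log minimal model over $Z$ --- but the existence of such a model is precisely what you are trying to produce, so the appeal is circular. What you actually have is $K_{Y'}+\Delta'=(K_{Y'}+\Gamma')+F$ with $K_{Y'}+\Gamma'$ nef over $Z$ and $F\geq 0$ exceptional over $X$; you need a separate argument (e.g.\ that every $(K_{Y'}+\Delta')$-negative extremal ray has locus inside ${\rm Supp}\,F$, combined with a special-termination or degenerate-divisor argument in the spirit of \cite[Lemma 3.3 and Theorem 3.5]{birkar-flip}) to show this MMP terminates and contracts all of $F$. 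Likewise, the assertion that the output is a log minimal model of $(X,\Delta)$ (the strict discrepancy inequalities for divisors contracted at either stage) and the claim that the weak lc and good minimal model cases ``follow by the same scheme'' are stated without verification; the good minimal model case in particular requires showing that, once $F$ is contracted, $K_{Y''}+\Delta''$ is crepant to the pullback of $K_{Y'}+\Gamma'$ on a common resolution so that semi-ampleness transfers. These are fillable with standard tools, but as written the proof is incomplete at its crucial step.
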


We close this section with the following lemma. 
It plays an important role in the proof of Theorem \ref{thmmain}. 

\begin{lem}\label{lemlcmodel} Let $\pi\colon X\to Z$ be a projective morphism of normal varieties, which are not necessarily quasi-projective. 
Let $(X,\Delta)$ be a $\mathbb{Q}$-factorial lc pair such that $(X,0)$ is klt, and let $D$ be an $\mathbb{R}$-divisor on $X$ such that $(X,\Delta+D)$ is lc. 
Suppose that $(X,\Delta+tD)$ has the log canonical model over $Z$ for any $0\leq t<1$.

Then, there is a birational contraction $\phi\colon X\dashrightarrow Y$ over $Z$ and a positive real number $t_{0}$ such that for any $0<t\leq t_{0}$, the pair $(Y,\Delta_{Y}+tD_{Y})$ is the log canonical model of $(X,\Delta+tD)$ over $Z$, where $\Delta_{Y}$ and $D_{Y}$ are the birational transforms of $\Delta$ and $D$ on $Y$, respectively. 
In particular, $D_{Y}$ is $\mathbb{R}$-Cartier. 
\end{lem}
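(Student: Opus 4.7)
The plan is to build the desired $Y$ through two successive runs of the MMP and then take a common ample model that works for every sufficiently small $t$.

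First, since $(X,\Delta)$ has a log canonical model over $Z$ by hypothesis and $(X,0)$ is klt, Theorem \ref{thmtermi} applies: a $(K_X+\Delta)$-MMP over $Z$ with scaling of an ample divisor (working locally over $Z$ to handle the non-quasi-projective setting) terminates at a $\mathbb{Q}$-factorial log minimal model $X\dashrightarrow X_1$, on which $K_{X_1}+\Delta_1$ is nef over $Z$. Next, I would choose $t_\star\in(0,1)$ small enough so that, in a subsequent $(K+\Delta+t_\star D)$-MMP started from $X_1$, every contracted extremal ray $R$ satisfies $(K+\Delta)\cdot R=0$. This can be arranged because only finitely many extremal rays are relevant on each intermediate variety by the cone theorem, and because the MMP terminates in finitely many steps (Theorem \ref{thmtermi} applied to $(X,\Delta+t_\star D)$ via the hypothesized lc model and Lemma \ref{lembirequiv}). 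With this choice, the MMP ends at a $\mathbb{Q}$-factorial $X_2$ on which $K+\Delta+t_\star D$ is nef; since only $(K+\Delta)$-trivial rays are contracted, a standard negativity argument shows that $K+\Delta$ remains nef on $X_2$ as well.

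On $X_2$, both $K+\Delta$ and $K+\Delta+t_\star D$ are nef over $Z$, so by convex combination $K+\Delta+tD$ is nef over $Z$ for every $t\in[0,t_\star]$; combined with the hypothesized lc models and Lemma \ref{lembirequiv}, it is semi-ample over $Z$ for each $t\in(0,t_\star)$. For any curve $C$ on $X_2$, the function $t\mapsto(K+\Delta+tD)\cdot C$ is linear on $[0,t_\star]$ and non-negative at both endpoints, hence either strictly positive on $(0,t_\star)$ or identically zero. Consequently, for every $t\in(0,t_\star)$ the set of curves on which $K+\Delta+tD$ is trivial coincides with the $t$-independent set $\{C:(K+\Delta)\cdot C=D\cdot C=0\}$, and the resulting ample model $\psi\colon X_2\to Y$ is the same morphism for all such $t$. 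Taking $\phi\colon X\dashrightarrow Y$ to be the composition with $X\dashrightarrow X_2$ and picking any $t_0\in(0,t_\star)$, I obtain a birational contraction such that $(Y,\Delta_Y+tD_Y)$ is the log canonical model of $(X,\Delta+tD)$ over $Z$ for every $t\in(0,t_0]$. Choosing two distinct values $t_1,t_2\in(0,t_0]$, the $\mathbb{R}$-Cartier difference $(K_Y+\Delta_Y+t_2D_Y)-(K_Y+\Delta_Y+t_1D_Y)=(t_2-t_1)D_Y$ forces $D_Y$ to be $\mathbb{R}$-Cartier.

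The main obstacle will be the choice of $t_\star$ in the second step: arranging that the $(K+\Delta+t_\star D)$-MMP contracts only $(K+\Delta)$-trivial extremal rays requires an inductive argument using finiteness of the MMP and the cone theorem on the intermediate models; one must iteratively shrink $t_\star$ at each step while still guaranteeing a uniform positive bound from overall termination.
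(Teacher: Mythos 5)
Your proposal is correct and follows essentially the same two-stage strategy as the paper's proof: run a $(K_{X}+\Delta)$-MMP to a minimal model, then a $(K+\Delta)$-trivial $(K+\Delta+t_{\star}D)$-MMP (justified by the length of extremal rays, which is exactly the point the paper also invokes with the same brevity), and finally identify a single ample model valid for all small $t$ --- the paper does this last step by passing to the ample model $Y_{0}$ of $(X,\Delta)$ and using openness of ampleness for a convex combination, whereas you compare null curves directly on $X_{2}$, which is an equivalent bookkeeping. The one point you should fold explicitly into the choice of $t_{\star}$ is that the first MMP $X\dashrightarrow X_{1}$ must also remain a sequence of $(K+\Delta+tD)$-negative steps for $t\leq t_{\star}$ (automatic after shrinking, since there are finitely many steps, and stated explicitly in the paper), as otherwise $(X_{2},\Delta_{2}+tD_{2})$ need not be a weak lc model of the original pair $(X,\Delta+tD)$ and the resulting $Y$ need not be its log canonical model.
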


\begin{proof}
Note that the divisor $K_{X}+\Delta$ is big over $Z$ by Definition \ref{defnlcmodel}. 

First, we prove the lemma in the case when $Z$ is quasi-projective. 
Since the log canonical model is in particular a weak lc model with semi-ample log canonical divisor, $(X,\Delta+tD)$ has a good  minimal model over $Z$ for any $0\leq t<1$. 
Let $(X,\Delta)\dashrightarrow (X',\Delta')$ be a sequence of steps of the $(K_{X}+\Delta)$-MMP over $Z$ to a good minimal model, and $X'\to Y_{0}$ be the contraction over $Z$ induced by $K_{X'}+\Delta'$, where $\Delta'$ is the birational transform of $\Delta$ on $X'$. 
Let $D'$ (resp.~$\Delta_{Y_{0}}$) be the birational transform of $D$ (resp.~$\Delta$) on $X'$ (resp.~$Y_{0}$). 
By construction, $K_{Y_{0}}+\Delta_{Y_{0}}$ is ample over $Z$. 
We can find $0<t'_{0}<1$ such that the birational map $X\dashrightarrow X'$ is a sequence of steps of the $(K_{X}+\Delta+t'D)$-MMP for any $0\leq t'\leq t'_{0}$. 
Since $(X,\Delta+tD)$ has a good  minimal model over $Z$ for any $0\leq t<1$, we can run the $(K_{X'}+\Delta'+t'_{0}D')$-MMP over $Z$ and get a good minimal model $(X',\Delta'+t'_{0}D')\dashrightarrow (X'',\Delta''+t'_{0}D'')$. 
By the argument of the length of extremal rays and replacing $t'_{0}$ if necessary, we may assume that  in each step of the $(K_{X'}+\Delta'+t'_{0}D')$-MMP, the birational transform of $K_{X'}+\Delta'$ is trivial over the exremal contraction. 
Since $K_{X'}+\Delta'$ is the pullback of $K_{Y_{0}}+\Delta_{Y_{0}}$, which is ample over $Z$, we see that the $(K_{X'}+\Delta'+t'_{0}D')$-MMP is the $(K_{X'}+\Delta'+t'_{0}D')$-MMP over $Y_{0}$. 
So we have the following diagram. 
$$
\xymatrix
{
X\ar@{-->}[r]\ar[dr]_{\pi}&X'\ar@{-->}[rr]\ar[dr]&&X''\ar[dl]\\
&Z&Y_{0}\ar[l]
}
$$
Since the divisor $K_{X''}+\Delta''+t'_{0}D''$ is semi-ample over $Z$, it is semi-ample over $Y_{0}$. 
Let $X''\to Y$ be the contraction over $Y_{0}$ induced by $K_{X''}+\Delta''+t'_{0}D''$. 
Let $g\colon Y\to Y_{0}$ be the natural morphism, and $\Delta_{Y}$ and $D_{Y}$ be the birational transforms of $\Delta$ and $D$ on $Y$, respectively. 
Then, we have  $K_{Y}+\Delta_{Y}=g^{*}(K_{Y_{0}}+\Delta_{Y_{0}})$, and the divisor $K_{Y}+\Delta_{Y}+t'_{0}D_{Y}$ is ample over $Y_{0}$. 
Since $K_{Y_{0}}+\Delta_{Y_{0}}$ is ample over $Z$, we can find $t_{0}$ such that $0<t_{0}< t'_{0}$ and for any $0<t\leq t_{0}$, the divisor
$$K_{Y}+\Delta_{Y}+tD_{Y}=\frac{t}{t'_{0}}(K_{Y}+\Delta_{Y}+t'_{0}D_{Y})+\left(1-\frac{t}{t'_{0}}\right)g^{*}(K_{Y_{0}}+\Delta_{Y_{0}})$$
is ample over $Z$. 
By construction, for any $0<t\leq t_{0}$, the birational map $X\dashrightarrow X''$ is a sequence of steps of the $(K_{X}+\Delta+tD)$-MMP over $Z$. 
Since $K_{X''}+\Delta''+tD''$ is the pullback of $K_{Y}+\Delta_{Y}+tD_{Y}$, we see that the pair $(Y,\Delta_{Y}+tD_{Y})$ is the log canonical model of $(X,\Delta+tD)$ over $Z$ for any $0<t\leq t_{0}$.
Therefore, the lemma holds true when $Z$ is quasi-projective. 

From now on, we prove the general case. 
We cover $Z$ by a finitely many affine open subset $\{U_{i}\}_{i}$, and we put $V_{i}=\pi^{-1}(U_{i})$. 
By the quasi-projective case of the lemma, for each $i$, there is $t_{i}>0$ and a birational contraction  $V_{i}\dashrightarrow Y_{i}$ over $U_{i}$ such that for any $0<t\leq t_{i}$, the pair $(Y_{i},\Delta_{Y_{i}}+tD_{Y_{i}})$ is the log canonical model of $(V_{i}, \Delta|_{V_{i}}+tD|_{V_{i}})$ over $U_{i}$. 
Set $t''={\rm min}\{t_{i}\}_{i}$ and construct $Y$ by gluing all $Y_{i}$. 
By construction, for any $0<t \leq t''$, the pair $(Y,\Delta_{Y}+tD_{Y})$ is the log canonical model of $(X,\Delta+tD)$ over $Z$. 
Therefore, the birational map $X\dashrightarrow Y$ over $Z$ is the desired one. 
\end{proof}

\section{A spacial kind of relative log MMP}\label{sec3}

In this section, we show a special kind of the relative log MMP (Theorem \ref{thmrelmmp}), which plays a crucial role in the proof of Theorem \ref{thmmain}. 

\begin{defn}
Let $X$ be a normal projective variety, and let $D$ be an $\mathbb{R}$-Cartier $\mathbb{R}$-divisor $D$ on $X$. 

First, we define the {\em invariant  Iitaka dimension} of $D$, denoted by $\kappa_{\iota}(X,D)$, as follows (see also \cite[Definition 2.5.5]{fujino-book}):  
If there is an $\mathbb{R}$-divisor $E\geq 0$ such that $D\sim_{\mathbb{R}}E$, set $\kappa_{\iota}(X,D)=\kappa(X,E)$. 
Here, the right hand side is the usual Iitaka dimension of $E$. 
Otherwise, we set $\kappa_{\iota}(X,D)=-\infty$. 
We can check that $\kappa_{\iota}(X,D)$ is well-defined, i.e., when there is $E\geq 0$ such that $D\sim_{\mathbb{R}}E$, $\kappa_{\iota}(X,D)$ does not depend on the choice of $E$. 
By definition, we have $\kappa_{\iota}(X,D)\geq0$ if and only if $D$ is $\mathbb{R}$-linearly equivalent to an effective $\mathbb{R}$-divisor. 

Next, we define the {\em numerical dimension} of $D$, denoted by $\kappa_{\sigma}(X,D)$, as follows (see also \cite[V, 2.5 Definition]{nakayama}): 
For any Cartier divisor $A$ on $X$, we set
$$
\sigma(D;A)={\rm max}\left\{k\in \mathbb{Z}_{\geq0}\middle|\, \underset{m\to \infty}{\rm lim}{\rm sup}\frac{{\rm dim}H^{0}(X,\mathcal{O}_{X}(\llcorner mD \lrcorner+A))}{m^{k}}>0\right\}
$$
if ${\rm dim}H^{0}(X,\mathcal{O}_{X}(\llcorner mD \lrcorner+A))>0$ for infinitely many $m>0$, and otherwise we set $\sigma(D;A):=-\infty$. 
Then, we define 
$$\kappa_{\sigma}(X,D):={\rm max}\{\sigma(D;A)\,|\,A{\rm\; is\; a\;Cartier\;divisor\;on\;}X\}.$$

Let $X\to Z$ be a projective morphism from a normal variety to a variety, and let $D$ be an $\mathbb{R}$-Cartier $\mathbb{R}$-divisor on $X$. 
Then, the {\em relative numerical dimension} of $D$ over $Z$ is defined by $\kappa_{\sigma}(F,D|_{F})$, where $F$ is a sufficiently general fiber of the Stein factorization of $X\to Z$ (see \cite[2.2]{has-mmp}). 
\end{defn}

\begin{rem}\label{remdiv}
We write down basic properties of the invariant Iitaka dimension and the numerical dimension. 
\begin{enumerate}
\item
Let $D_{1}$ and $D_{2}$ be $\mathbb{R}$-Cartier $\mathbb{R}$-divisors on a normal projective variety $X$. 
\begin{itemize}
\item
Suppose that $D_{1}\sim_{\mathbb{R}}D_{2}$.
Then, we have $\kappa_{\iota}(X,D_{1})=\kappa_{\iota}(X,D_{2})$ and $\kappa_{\sigma}(X,D_{1})=\kappa_{\sigma}(X, D_{2})$. 
\item
Suppose that we have $D_{1}\sim_{\mathbb{R}}N_{1}$ and $D_{2}\sim_{\mathbb{R}}N_{2}$ for  $\mathbb{R}$-divisors $N_{1}\geq0$ and $N_{2}\geq0$ respectively such that ${\rm Supp}N_{1}={\rm Supp}N_{2}$. 
Then, we have $\kappa_{\iota}(X,D_{1})=\kappa_{\iota}(X,D_{2})$ and $\kappa_{\sigma}(X,D_{1})=\kappa_{\sigma}(X, D_{2})$. 
\end{itemize}
\item
Let $f\colon Y \to X$ be a surjective morphism of normal projective varieties and $D$ an $\mathbb{R}$-Cartier $\mathbb{R}$-divisor on $X$. 
\begin{itemize}
\item
We have $\kappa_{\iota}(X,D)=\kappa_{\iota}(Y,f^{*}D)$ and $\kappa_{\sigma}(X,D)=\kappa_{\sigma}(Y,f^{*}D)$.
\item
Suppose that $f$ is birational. 
Let $D'$ be an $\mathbb{R}$-Cartier $\mathbb{R}$-divisor on $Y$ such that  $D'=f^{*}D+E$ for some effective $f$-exceptional divisor $E$. 
Then, we have $\kappa_{\iota}(X,D)=\kappa_{\iota}(Y,D')$ and $\kappa_{\sigma}(X,D)=\kappa_{\sigma}(Y, D')$.
\end{itemize}
\end{enumerate}
\end{rem}

\begin{defn}[Relatively abundant and relatively log abundant divisor]\label{defnabund}
Let $\pi\colon X\to Z$ be a projective morphism from a normal variety to a variety, and let $D$ be an $\mathbb{R}$-Cartier $\mathbb{R}$-divisor on $X$. 
We say $D$ is $\pi$-{\em abundant} or {\em abundant over} $Z$ if the equality $\kappa_{\iota}(F,D|_{F})=\kappa_{\sigma}(F,D|_{F})$ holds,  where $F$ is a sufficiently general fiber of the Stein factorization of $\pi$. 

Let $\pi\colon X\to Z$ and $D$ be as above, and let $(X,\Delta)$ be an lc pair. 
We say $D$ is $\pi$-{\em log abundant} with respect to $(X,\Delta)$ when $D$ is $\pi$-abundant and the pullback of $D$ to the normalization of any lc center of $(X,\Delta)$ is abundant over $Z$. 
\end{defn}

The following lemma is the $\mathbb{R}$-divisor version of \cite[Theorem 4.12]{fujino-gongyo}. 

\begin{lem}
\label{lemneflogabund}
Let $\pi\colon X\to Z$ be a morphism of normal projective varieties, and let $(X,\Delta)$ be an lc pair such that $\Delta$ is an $\mathbb{R}$-divisor. 
Suppose that $K_{X}+\Delta$ is $\pi$-nef and $\pi$-log abundant with respect to $(X,\Delta)$. 

Then, $K_{X}+\Delta$ is $\pi$-semi-ample. 
\end{lem}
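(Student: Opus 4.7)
The plan is to reduce the statement to the $\mathbb{Q}$-divisor version \cite[Theorem 4.12]{fujino-gongyo} by expressing $K_{X}+\Delta$ as a positive $\mathbb{R}$-linear combination of $\mathbb{Q}$-divisor log canonical divisors that inherit both the $\pi$-nef and the $\pi$-log abundance properties. First, I would take a dlt blow-up $f\colon (Y,\Gamma)\to (X,\Delta)$ satisfying $K_{Y}+\Gamma=f^{*}(K_{X}+\Delta)$. Since $\pi$-semi-ampleness descends along $f$ via the projection formula, and since $\pi$-log abundance transfers to $(Y,\Gamma)$ because every lc center of $(Y,\Gamma)$ is birational to an lc center of $(X,\Delta)$ and pullbacks preserve both $\kappa_{\iota}$ and $\kappa_{\sigma}$ by Remark \ref{remdiv}, I may replace $(X,\Delta)$ with $(Y,\Gamma)$ and assume that $(X,\Delta)$ is $\mathbb{Q}$-factorial dlt.

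Next, I would run a Shokurov polytope construction. Writing $\Delta=\sum a_{i}D_{i}$ in terms of prime components, the set of boundaries $\Delta'=\sum a'_{i}D_{i}$ with $(X,\Delta')$ dlt, $\lfloor \Delta'\rfloor=\lfloor \Delta\rfloor$, and $K_{X}+\Delta'$ $\pi$-nef forms a rational polytope containing $\Delta$ in its relative interior (with respect to coefficients $<1$), by the cone theorem for dlt pairs with bounded differences. This produces $\mathbb{Q}$-boundaries $\Delta_{1},\ldots,\Delta_{k}$ and positive real numbers $r_{1},\ldots,r_{k}$ with $\sum r_{j}=1$, $\Delta=\sum r_{j}\Delta_{j}$, each $(X,\Delta_{j})$ dlt with exactly the same lc centers as $(X,\Delta)$, and every $K_{X}+\Delta_{j}$ being $\pi$-nef.

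The crux is then to upgrade $\pi$-nefness to $\pi$-log abundance for each $K_{X}+\Delta_{j}$ with respect to $(X,\Delta_{j})$. Since the lc centers coincide with those of $(X,\Delta)$, it suffices to fix the normalization $S^{\nu}$ of any lc center $S$ (including $X$ itself) and verify that $(K_{X}+\Delta_{j})|_{S^{\nu}}$ is abundant over $Z$. One has the decomposition
\[
(K_{X}+\Delta)|_{S^{\nu}}=\sum_{j}r_{j}\,(K_{X}+\Delta_{j})|_{S^{\nu}}
\]
as a positive $\mathbb{R}$-linear combination of nef $\mathbb{Q}$-divisors whose sum is abundant. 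Nefness together with an intersection-number argument lets one bound $\kappa_{\sigma}$ of each summand from above by $\kappa_{\sigma}$ of the total, while the hypothesis $\kappa_{\iota}\bigl((K_{X}+\Delta)|_{S^{\nu}}\bigr)\geq 0$ together with the standard inequality $\kappa_{\iota}\leq \kappa_{\sigma}$ gives the matching lower bound on each $\kappa_{\iota}\bigl((K_{X}+\Delta_{j})|_{S^{\nu}}\bigr)$, forcing equality summand-by-summand. Granted $\pi$-log abundance, \cite[Theorem 4.12]{fujino-gongyo} applies to each $(X,\Delta_{j})$ and yields that every $K_{X}+\Delta_{j}$ is $\pi$-semi-ample, so the conclusion follows from the elementary fact that a positive real combination of $\pi$-semi-ample divisors is $\pi$-semi-ample.

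The principal obstacle will be this preservation of log abundance under the polytope decomposition: neither $\kappa_{\iota}$ nor $\kappa_{\sigma}$ is continuous in the $\mathbb{R}$-divisor class, so the argument must be carried out on each lc center individually and must use both the nefness of the $K_{X}+\Delta_{j}$ and the specific convex-combination structure. If the direct estimate above proves too optimistic, a fallback is to choose the polytope decomposition more carefully, picking effective $\mathbb{R}$-linear representatives of $(K_{X}+\Delta)|_{S^{\nu}}$ with a common support and then perturbing coefficients so that all the $(K_{X}+\Delta_{j})|_{S^{\nu}}$ are represented by effective $\mathbb{Q}$-divisors supported on the same set, which by the second bullet of Remark \ref{remdiv} guarantees that their invariant Iitaka and numerical dimensions match those of the total sum by construction.
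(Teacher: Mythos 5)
Your skeleton (dlt blow-up, rational polytope of nearby boundaries, apply \cite[Theorem 4.12]{fujino-gongyo} to the rational vertices, recombine) is the same as the paper's, but the step you yourself identify as the crux is where the main-line argument breaks. For nef divisors the upper bound $\kappa_{\sigma}\bigl((K_{X}+\Delta_{j})|_{S^{\nu}}\bigr)\leq\kappa_{\sigma}\bigl((K_{X}+\Delta)|_{S^{\nu}}\bigr)$ is fine, but abundance of the sum gives no lower bound on $\kappa_{\iota}$ of the summands: a positive combination of nef divisors, none of which is $\mathbb{R}$-linearly equivalent to an effective divisor of the right dimension (or to any effective divisor at all), can perfectly well be ample. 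So "forcing equality summand-by-summand" is a non sequitur, and the inequality $\kappa_{\iota}\leq\kappa_{\sigma}$ points in the wrong direction for what you need. Your fallback --- effective representatives with a common support, so that Remark \ref{remdiv}(1) applies --- is the correct fix and is exactly what the paper does, but only for the total space: the polytope $\mathcal{T}_{(X)}$ is cut out by the rational condition $K_{X}+\Delta'\sim_{\mathbb{R}}N'$ with ${\rm Supp}\,N'={\rm Supp}\,N$.

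What your proposal is missing is the device the paper uses on the lc centers, namely induction on dimension. Since $K_{S}+\Delta_{S}=(K_{X}+\Delta)|_{S}$ is again nef and log abundant for the dlt adjunction pair, the induction hypothesis makes it \emph{semi-ample}, so it can be written as an $\mathbb{R}_{>0}$-combination of semi-ample $\mathbb{Q}$-divisors $A_{j}$; one then cuts the polytope by the rational affine condition $(K_{X}+\Delta')|_{S}\in\sum_{j}\mathbb{R}_{\geq0}A_{j}$, which forces the restriction of every rational approximant to be semi-ample, hence abundant, with no need to track effective representatives or supports on $S$ at all. Without this (or a simultaneous common-support convex-geometry argument carried out on every lc center at once, which is possible but is precisely the content you would still have to supply), the log abundance of the $(X,\Delta_{j})$ is not established and \cite[Theorem 4.12]{fujino-gongyo} cannot be invoked. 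A further minor omission: the paper first adds the pullback of a sufficiently ample divisor on $Z$ to reduce to the absolute case before starting the induction, which you would also need in order to quote the absolute statements cleanly.
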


\begin{proof}
We prove the lemma when $(X,\Delta)$ is not klt because the klt case of the lemma can be proved with a very similar idea to non-klt case and a simpler argument than the proof of non-klt case. 
By adding the pullback of a sufficiently ample divisor on $Z$, we may assume that the divisor $K_{X}+\Delta$ is globally nef and log abundant with respect to $(X,\Delta)$. 
We show that $K_{X}+\Delta$ is semi-ample by induction on ${\rm dim}\,X$.  
So we may assume that $Z$ is a point. 

By taking a dlt blow-up, we may assume that $(X,\Delta)$ is $\mathbb{Q}$-factorial dlt. 
Since $K_{X}+\Delta$ is abundant, there is $N\geq0$ such that $K_{X}+\Delta\sim_{\mathbb{R}}N$. 
Let $\mathcal{L}\subset{\rm WDiv}_{\mathbb{R}}(X)$ be the set of boundary $\mathbb{R}$-divisors $\Delta'$ such that $(X,\Delta')$ is lc, ${\rm Supp}\Delta'={\rm Supp}\Delta$ and $\llcorner \Delta' \lrcorner=\llcorner \Delta \lrcorner$. 
By an argument of convex geometry, we see that the set
\begin{equation*}\left\{ \Delta'\in \mathcal{L}
 \left|\begin{array}{l}
 \bullet \; (X,\Delta'){\rm \; is \; dlt,\;}\\
 \bullet \; K_{X}+\Delta' {\rm \; is \; nef,\;and\;}\\
 \bullet \; K_{X}+\Delta'\sim_{\mathbb{R}}N'  {\rm \; for \; an \;}N'\geq0{\rm \; such \; that \;} {\rm Supp}N'={\rm Supp}N.
 \end{array}\right.\right\}\end{equation*}
contains a rational polytope $\mathcal{T}_{(X)}\subset \mathcal{L}$ in which $\Delta$ is contained. 
By shrinking $\mathcal{T}_{(X)}$, we can assume that lc centers of $(X,\Delta')$ coincide with those of $(X,\Delta)$ for any $\Delta'\in \mathcal{T}_{(X)}$. 
By Remark \ref{remdiv} (1), $K_{X}+\Delta'$ is abundant for any $\Delta'\in \mathcal{T}_{(X)}$.  

We fix an lc center $S$ of $(X,\Delta)$. 
Note that $S$ is also an lc center of $(X,\Delta')$ for any $\Delta'\in \mathcal{T}_{(X)}$. 
By construction, any divisor $\Delta'\in \mathcal{L}$ can be written as $\llcorner \Delta \lrcorner +\sum_{i} d_{i}'D_{i}$, where $0\leq d_{i}'<1$ and $D_{i}$ are prime divisors which are components of $\Delta-\llcorner \Delta \lrcorner$. 
Then ${\rm Supp}D_{i}\nsupseteq S$. 
Since $X$ is $\mathbb{Q}$-factorial, for any component $D_{i}$ of $\Delta-\llcorner \Delta \lrcorner$, the restriction $D_{i}|_{S}$ is well-defined as an effective $\mathbb{Q}$-Cartier $\mathbb{Q}$-divisor on $S$.  
We also see that the divisor $(K_{X}+\llcorner \Delta \lrcorner)|_{S}$ is a $\mathbb{Q}$-Cartier $\mathbb{Q}$-divisor on $S$. 
We define an $\mathbb{R}$-divisor $\Delta_{S}$ by adjunction $K_{S}+\Delta_{S}=(K_{X}+\Delta )|_{S}$. 
Then, $K_{S}+\Delta_{S}$ is semi-ample by the induction hypothesis. 
Therefore, if we write $\Delta=\llcorner \Delta \lrcorner +\sum d_{i}D_{i}$ with real numbers $0< d_{i}<1$, we have 
$K_{S}+\Delta_{S}=(K_{X}+\llcorner \Delta \lrcorner)|_{S}+\sum_{i} d_{i} (D_{i}|_{S})$
 and it can be written as an $\mathbb{R}_{>0}$-linear combination of finitely many (not necessarily effective) semi-ample $\mathbb{Q}$-divisors $\{A_{j}\}_{j}$. 
We can write $(K_{X}+ \Delta')|_{S}=(K_{X}+\llcorner \Delta \lrcorner)|_{S}+\sum_{i} d_{i}' (D_{i}|_{S})$ for any $\Delta'\in \mathcal{L}$. 
From these facts and an argument of convex geometry, the set
\begin{equation*}\left\{ \Delta'\in \mathcal{T}_{(X)}
 \left|\begin{array}{l}(K_{X}+\Delta')|_{S}=\sum_{j}a_{j}A_{j}{\rm ,\,where\;}a_{j}\in \mathbb{R}_{\geq 0}\!\!
\end{array}\right.\right\}\end{equation*}
contains a rational polytope $\mathcal{T}_{(S)}\ni \Delta$. 

We consider 
$$\mathcal{T}=\bigcap
_{\substack {S:{\rm \;lc \;center}\\{\rm \quad\; of\;}(X,\Delta)}}
\mathcal{T}_{(S)},$$
which is a rational polytope containing $\Delta$. 
We pick positive real numbers $r_{1},\,\cdots,r_{m}$ and $\mathbb{Q}$-divisors $\Delta^{(1)},\,\cdots,\Delta^{(m)}\in \mathcal{T}$ such that $\sum_{k=1}^{m} r_{k}=1$ and $\sum_{k=1}^{m} r_{k}\Delta^{(k)}=\Delta.$ 
By construction of $\mathcal{T}$, for any $\Delta'\in \mathcal{T}$, the divisor $K_{X}+\Delta'$ is nef and log abundant with respect to $(X,\Delta')$. 
By \cite[Theorem 4.12]{fujino-gongyo}, $K_{X}+\Delta^{(k)}$ are semi-ample. 
Since $K_{X}+\Delta=\sum_{k=1}^{m} r_{k}(K_{X}+\Delta^{(k)})$, we see that $K_{X}+\Delta$ is semi-ample. 
So we complete the proof. 
\end{proof}

\begin{thm}\label{thmrelmmp}
Let $\pi\colon X\to Z$ be a projective morphism of normal quasi-projective varieties and $(X,\Delta)$ be an lc pair. 
Suppose that
\begin{itemize}
\item
$-(K_{X}+\Delta)$ is pseudo-effective over $Z$, and 
\item 
for any lc center $S$ of $(X,\Delta)$ and its normalization $S^{\nu}\to S$, 
the pullback of $-(K_{X}+\Delta)$ to $S^{\nu}$ is pseudo-effective over $Z$.  
\end{itemize}
Then, $(X,\Delta)$ has a good minimal model or a Mori fiber space over $Z$. 
\end{thm}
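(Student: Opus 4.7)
The plan is to argue by induction on $\dim X$, along the strategy of \cite[Theorem 1.1]{has-mmp}, splitting according to whether $K_X+\Delta$ is $\pi$-pseudo-effective. I would first take a dlt blow-up $f\colon(Y,\Gamma)\to(X,\Delta)$ and use Lemma \ref{lembirequiv} to transfer the conclusion, so that I may assume $(X,\Delta)$ is $\mathbb{Q}$-factorial dlt (in particular $(X,0)$ is klt, enabling Theorem \ref{thmtermi}). The hypothesis on lc centers is preserved under the dlt blow-up because every lc center of $(Y,\Gamma)$ is either the strict transform of, or an $f$-exceptional divisor mapping onto, an lc center of $(X,\Delta)$, and pseudo-effectivity of an $\mathbb{R}$-Cartier divisor is preserved under pullback by a projective surjective morphism.

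If $K_X+\Delta$ is not $\pi$-pseudo-effective, I run a $(K_X+\Delta)$-MMP over $Z$ with scaling of a $\pi$-ample divisor; each step exists because $(X,0)$ is klt, and the MMP terminates at a Mori fiber space over $Z$ by the non-pseudo-effectivity. If $K_X+\Delta$ is $\pi$-pseudo-effective, then both $\pm(K_X+\Delta)$ are $\pi$-pseudo-effective, and on a sufficiently general fiber $F$ of the Stein factorization of $\pi$ the divisor $K_F+\Delta|_F=(K_X+\Delta)|_F$ is numerically trivial. Log abundance for lc pairs with numerically trivial log canonical divisor yields $K_F+\Delta|_F\sim_{\mathbb{R}}0$, so $K_X+\Delta$ is $\pi$-abundant. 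To promote this to $\pi$-log abundance with respect to $(X,\Delta)$, for each lc center $S$ of $(X,\Delta)$ I use dlt adjunction to write $(K_X+\Delta)|_{S^\nu}=K_{S^\nu}+\Delta_{S^\nu}$ with $(S^\nu,\Delta_{S^\nu})$ lc, and observe that the lc centers of $(S^\nu,\Delta_{S^\nu})$ are dominated by lc centers of $(X,\Delta)$ strictly contained in $S$; hence the inductive hypothesis applies (its assumptions being inherited from the given restriction condition on $(X,\Delta)$) and yields a good minimal model of $(S^\nu,\Delta_{S^\nu})$ over $Z$, in particular abundance of $(K_X+\Delta)|_{S^\nu}$ over $Z$.

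Finally, I run a $(K_X+\Delta)$-MMP over $Z$ with scaling of a $\pi$-ample divisor. Since $\pi$-log abundance of the log canonical divisor is preserved under each step of the dlt MMP, the limit model $(X',\Delta')$ has $K_{X'}+\Delta'$ $\pi$-nef and $\pi$-log abundant with respect to $(X',\Delta')$, so Lemma \ref{lemneflogabund} applied after a relative compactification gives $\pi$-semi-ampleness, hence a good minimal model. The main obstacle is the termination of this MMP: I would handle it by using the established $\pi$-log abundance to build a weak lc model via a Gongyo--Hacon--Xu type construction, and then invoke Theorem \ref{thmtermi} to conclude that the MMP with scaling terminates.
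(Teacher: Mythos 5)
Your reduction steps (dlt blow-up via Lemma \ref{lembirequiv}, the Mori fiber space case when $K_X+\Delta$ is not $\pi$-pseudo-effective, and the observation that the hypotheses force relative numerical dimension $0$ so that $K_X+\Delta$ is $\pi$-abundant, with $\pi$-log abundance supplied by induction on the lc centers of the dlt model) are all consistent with what the paper does, and Lemma \ref{lemneflogabund} is indeed the tool used at the very end (Step 6 of the paper's proof). But there is a genuine gap at the point you yourself flag: the termination of the $(K_X+\Delta)$-MMP over $Z$. Theorem \ref{thmtermi} only converts the \emph{existence} of a log minimal model into termination of the MMP with scaling, so your plan needs an independent construction of a weak lc model for a pseudo-effective, $\pi$-log abundant lc pair of relative numerical dimension $0$ whose lc centers need not dominate $Z$. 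No such result is available among the cited references --- the introduction of the paper states explicitly that the results of [B1], [HX] and [H2] are insufficient here precisely because the lc centers only satisfy the weak pseudo-effectivity condition --- and ``a Gongyo--Hacon--Xu type construction'' is not a proof but a restatement of the problem. A secondary unsupported claim is that $\pi$-log abundance with respect to $(X,\Delta)$ is preserved under each step of the MMP: preservation on the ambient variety is Remark \ref{remdiv}, but for an lc center $T$ the induced map to its transform $T'$ is only a birational map along which discrepancies move in one direction, and comparing $\kappa_\iota$ and $\kappa_\sigma$ of the two restrictions is not covered by Remark \ref{remdiv}(2).

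The paper's route around this is quite different in its core mechanism. After reducing to $X$, $Z$ projective (via lc closures), it builds a special dlt model on which $K_X+\Delta\sim_{\mathbb{R},Z}G+H$ with ${\rm Supp}\,G\subset{\rm Supp}\,\llcorner\Delta\lrcorner$, every lc center of $(X,\Delta-tG)$ dominating $Z$, and $(X,\Delta+tH)$ dlt for small $t$. It then runs the $(K_{X_1}+\Delta_1)$-MMP with scaling of $e_1H_1$ along a sequence $e_i\to 0$ so that each intermediate model is a good minimal model of $(X,\Delta+e_iH)$; since the MMP takes place inside ${\rm Supp}\,\llcorner\Delta_1\lrcorner$, termination is obtained by Fujino's special termination, which reduces the problem to the existence of log minimal models on lc centers $S_m$ of the models $X_m$ \emph{appearing along the MMP}. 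The delicate part (Step 5 of the paper) is verifying that these $(S_m,\Delta_{S_m})$ still satisfy the inductive hypothesis, i.e.\ that $-(K_{X_m}+\Delta_m)|_{\Upsilon_m}$ is pseudo-effective over $Z$; this is done by comparing discrepancies of $K_X+\Delta+e_iH$ and $K_{X_i}+\Delta_i+e_iH_i$ on common resolutions and letting $e_i\to 0$. Your proposal applies the induction only to the lc centers of the initial model, which is not enough for the special termination argument, and it supplies no substitute for it. To repair the proposal you would need to incorporate something equivalent to the paper's Steps 1--5.
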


\begin{proof}
We may assume that $(X,\Delta)$ is not klt because otherwise the theorem follows from  \cite[Theorem 1.2]{has-mmp}. 
We prove Theorem \ref{thmrelmmp} by induction on the dimension of $X$. 
The basic strategy is the same as \cite[Proof of Theorem 1.2]{has-mmp}. 
We can assume that $\pi$ is a contraction and $K_{X}+\Delta$ is pseudo-effective over $Z$.

\begin{step}\label{step0.5}
In this step, we show that we may assume $X$ and $Z$ are projective. 

Let $Z\hookrightarrow Z^{c}$ be an open immersion to a normal projective variety $Z^{c}$. 
Thanks to \cite[Corollary 1.3]{has-mmp}, there is an lc closure $(X^{c},\Delta^{c})$ of $(X,\Delta)$, that is, a projective lc pair $(X^{c},\Delta^{c})$ such that $X$ is an open subset of $X^{c}$ and $(X^{c}|_{X},\Delta^{c}|_{X})=(X,\Delta)$, and there is a projective morphism $\pi^{c}\colon X^{c}\to Z^{c}$. 
By construction of lc closures, we have $\pi^{c}|_{X}=\pi$ and ${\pi^{c}}^{-1}(Z)=X$. 
Furthermore, we can construct $(X^{c},\Delta^{c})$ so that any lc center $S^{c}$ of $(X^{c},\Delta^{c})$ intersects $X$ (see \cite[Corollary 1.3]{has-mmp}). 
Then, the divisor $-(K_{X^{c}}+\Delta^{c})$ is pseudo-effective over $Z^{c}$ and for any lc center $S^{c}$ of $(X^{c},\Delta^{c})$, the pullback of $-(K_{X^{c}}+\Delta^{c})$ to the normalization of $S^{c}$ is pseudo-effective over $Z^{c}$ because relative numerical dimension of any $\mathbb{R}$-Cartier $\mathbb{R}$-divisor is determined on a sufficiently general fiber of the given morphism. 
Hence, we see that the morphism $(X^{c},\Delta^{c})\to Z^{c}$ satisfies the hypothesis of Theorem \ref{thmrelmmp}. 
If $(X^{c},\Delta^{c})$ has a good minimal model over $Z^{c}$, by restricting it over $Z$, we obtain a good minimal model of $(X,\Delta)$ over $Z$. 

In this way, by replacing $(X,\Delta)$ and $Z$ with $(X^{c},\Delta^{c})$ and $Z^{c}$, we may assume that $X$ and $Z$ are projective. 
\end{step}

\begin{step}\label{step1}
From this step to Step \ref{step5}, we prove that $(X,\Delta)$ has a log minimal model over $Z$. 
In this step, we construct a dlt blow-up with good properties. 
The strategy is the same as in \cite[Step 3 in the proof of Theorem 1.2]{has-mmp}. 

By the hypothesis, the relative numerical dimension of $K_{X}+\Delta$ over $Z$ is $0$. 
So there is $E\geq 0$ on $X$ such that $K_{X}+\Delta \sim_{\mathbb{R},Z}E$. 
Since $Z$ is projective, by adding the pullback of an ample divisor to $E$, we may assume that ${\rm Supp}E$ contains any lc center of $(X,\Delta)$ which is vertical over $Z$. 

We take a log resolution $f\colon \overline{X}\to X$ of $(X,{\rm Supp}(\Delta+E))$ and a log smooth model $(\overline{X},\overline{\Delta})$ of $(X,\Delta)$ (see \cite[Definition 2.9]{has-trivial} for definition of log smooth model). 
As in \cite[Proof of Lemma 2.10]{has-trivial}, by replacing $(\overline{X},\overline{\Delta})$ with a higher model, we may assume that we can write $\overline{\Delta}=\overline{\Delta}'+\overline{\Delta}''$ with $\overline{\Delta}'\geq0$ and $\overline{\Delta}''\geq0$ such that $\overline{\Delta}''$ is reduced and vertical over $Z$, and all lc centers of $(\overline{X},\overline{\Delta}')$ dominate $Z$. 
We can decompose $f^{*}E=\overline{G}+\overline{H}$ with $\overline{G}\geq0$ and $\overline{H}\geq0$ such that $\overline{G}$ and $\overline{H}$ have no common components, ${\rm Supp}\overline{G}\subset{\rm Supp}\llcorner \overline{\Delta}\lrcorner$ and no component of $\overline{H}$ is a component of $\llcorner \overline{\Delta}\lrcorner$. 
Since $(\overline{X},{\rm Supp}(\overline{\Delta}+\overline{H}))$ is log smooth, for any  $t>0$, if $ \overline{\Delta}+t\overline{H}$ is a boundary divisor then $(\overline{X},\overline{\Delta}+t\overline{H})$ is dlt. 
Since 
support of $E$ contains any lc center of $(X,\Delta)$ vertical over $Z$, we have ${\rm Supp}\overline{\Delta}''\subset{\rm Supp}\overline{G}$ . 
Moreover, since all lc centers of $(\overline{X},\overline{\Delta}')$ dominate $Z$, all lc centers of $(\overline{X},\overline{\Delta}-t\overline{G})$ dominate $Z$ for any $t>0$. 

We construct a dlt blow-up $(X_{0},\Delta_{0})\to (X,\Delta)$ by running the $(K_{\overline{X}}+\overline{\Delta})$-MMP over $X$. 
Let $G_{0}$ and $H_{0}$ be the birational transforms of $\overline{G}$ and $\overline{H}$ on $X_{0}$, respectively. 
By arguments of the log MMP, we can find $t_{0}>0$ such that for any $0<t\leq t_{0}$, the pair $(X_{0},\Delta_{0}+tH_{0})$ is dlt and all lc centers of $(X_{0},\Delta_{0}-tG_{0})$ dominate $Z$. 

In this way, by replacing $(X,\Delta)$, we can assume that $(X,\Delta)$ is $\mathbb{Q}$-factorial dlt and $K_{X}+\Delta\sim_{\mathbb{R},Z}G+H$ such that $G$ and $H$ satisfy
\begin{itemize}
\item
$G\geq0$, $H\geq0$, and $G$ and $H$ have no common components,
\item
${\rm Supp}G\subset {\rm Supp}\llcorner \Delta \lrcorner$, 
\item
any lc center of $(X,\Delta-tG)$ dominates $Z$ for any $0<t$, and 
\item
there is $t_{0}>0$ such that for any $0<t\leq t_{0}$, the pair $(X,\Delta+tH)$ is dlt. 
\end{itemize}
\end{step}

\begin{step}\label{step2}
Pick $0<\epsilon\leq t_{0}$ so that $\Delta-\epsilon G\geq0$, where $t_{0}$ is as in the fourth condition in Step \ref{step1}. 
In this step, we construct a strictly decreasing infinite sequence $\{e_{i}\}_{i\geq1}$ of real numbers and a sequence of birational maps over $Z$
$$X \dashrightarrow X_{1}\dashrightarrow X_{2}\dashrightarrow \cdots \dashrightarrow X_{i}\dashrightarrow \cdots$$
such that if we put $\Delta_{i}$ and $H_{i}$ as the birational transforms of $\Delta$ and $H$ on $X_{i}$ respectively, then
\begin{enumerate}
\item
$0<e_{i}<\epsilon$ and ${\rm lim}_{i\to \infty}e_{i}=0$, 
\item
$X \dashrightarrow X_{1}$ is a sequence of steps of the $(K_{X}+\Delta+e_{1}H)$-MMP over $Z$ to a good minimal model, 
\item
the sequence $X_{1}\dashrightarrow \cdots\dashrightarrow X_{i}\dashrightarrow \cdots$ is a sequence of finitely many steps of the $(K_{X_{1}}+\Delta_{1})$-MMP over $Z$ with scaling of $e_{1}H_{1}$, and 
\item
for any $i$, the pair $(X_{i},\Delta_{i}+e_{i}H_{i})$ is a good minimal model of $(X_{1},\Delta_{1}+e_{i}H_{1})$  over $Z$ and it is also a good minimal model of $(X,\Delta+e_{i}H)$ over $Z$. 
\end{enumerate}
By (4), if we set $\lambda_{i}={\rm inf}\{\mu \in \mathbb{R}_{\geq0}\,|\,K_{X_{i}}+\Delta_{i}+\mu H_{i} {\rm \; is \;nef\; over\;}Z\}$, then $\lambda_{i}\leq e_{i}$.  

Pick a strictly decreasing infinite sequence $\{e_{i}\}_{i\geq1}$ of positive real numbers such that $e_{i}<\epsilon$ for any $i\geq1$ and ${\rm lim}_{i\to \infty}e_{i}=0$. 
By conditions of Step \ref{step1}, the pairs $(X,\Delta+e_{i}H)$ and $(X,\Delta-\frac{e_{i}}{1+e_{i}}G)$ are dlt, and we have 
$$K_{X}+\Delta+e_{i}H\sim_{\mathbb{R},Z}(1+e_{i})\Bigl(K_{X}+\Delta-\frac{e_{i}}{1+e_{i}}G\Bigr).$$
Moreover, all lc centers of $(X,\Delta-\frac{e_{i}}{1+e_{i}}G)$ dominate $Z$ and the relative numerical dimension of $K_{X}+\Delta-\frac{e_{i}}{1+e_{i}}G$ over $Z$ is $0$ for any $i$. 
By \cite[Proposition 3.3]{has-mmp}, the pair $(X,\Delta-\frac{e_{i}}{1+e_{i}}G)$ has a good minimal model over $Z$, hence $(X,\Delta+e_{i}H)$ has a good minimal model over $Z$ for any $i$. 
By running the $(K_{X}+\Delta+e_{i}H)$-MMP over $Z$, we obtain a good minimal model
$(X,\Delta+e_{i}H)\dashrightarrow (X_{i},\Delta_{i}+e_{i}H_{i})$ over $Z$. 
Then, the log MMP only occurs in ${\rm Supp}(G+H)$, 
which does not depend on $i$. 
By replacing $\{e_{i}\}_{i\geq1}$ with a subsequence, we may assume that all birational maps $X \dashrightarrow X_{i}$ contract the same divisors, which implies that all $X_{i}$ are isomorphic in codimension one.

For any $0<t\leq e_{1}$, the pair $(X_{1},\Delta_{1}+tH_{1})$ has a good minimal model over $Z$. 
Indeed, we have 
$K_{X_{1}}+\Delta_{1}+tH_{1}\sim_{\mathbb{R},Z}(1+t)(K_{X_{1}}+\Delta_{1}-\frac{t}{1+t}G_{1})$ and the relative numerical dimension of $K_{X_{1}}+\Delta_{1}-\frac{t}{1+t}G_{1}$ over $Z$ is $0$, where $G_{1}$ is the birational transform of $G$ on $X_{1}$. 
Moreover, all lc centers of $(X_{1},\Delta_{1}-\frac{t}{1+t}G_{1})$ dominate $Z$. 
To check this, pick any prime divisor $P$ over $X_{1}$ such that $a(P,X_{1}, \Delta_{1}-\frac{t}{1+t}G_{1})=-1$. 
Since $(X_{1},\Delta_{1})$ is lc, we have $a(P,X_{1}, \Delta_{1}-\frac{e_{1}}{1+e_{1}}G_{1})=-1$. 
Since the birational map $X\dashrightarrow X_{1}$ is also a sequence of steps of the $(K_{X}+\Delta-\frac{e_{1}}{1+e_{1}}G)$-MMP, we have $a(P,X,\Delta-\frac{e_{1}}{1+e_{1}}G)=-1$. 
By the third condition in Step \ref{step1}, $P$ dominates $Z$. 
Thus, all lc centers of $(X_{1},\Delta_{1}-\frac{t}{1+t}G_{1})$ dominate $Z$. 
By \cite[Proposition 3.3]{has-mmp}, the pair $(X_{1},\Delta_{1}-\frac{t}{1+t}G_{1})$ has a good minimal model over $Z$, and so does $(X_{1},\Delta_{1}+tH_{1})$. 

Put $X'_{1}=X_{1}$ (resp.~$\Delta'_{1}=\Delta_{1}$, $H'_{1}=H_{1}$). 
By \cite[Lemma 2.14]{has-mmp}, we get a sequence of steps of the $(K_{X'_{1}}+\Delta'_{1})$-MMP over $Z$ with scaling of $e_{1}H'_{1}$
$$(X'_{1},\Delta'_{1})\dashrightarrow \cdots \dashrightarrow (X'_{j},\Delta'_{j})\dashrightarrow \cdots$$
such that if we set $\lambda'_{j}={\rm inf}\{\mu\in\mathbb{R}_{\geq0}\,|\,K_{X'_{j}}+\Delta'_{j}+\mu H'_{j}{\rm \; is\; nef\;over\;}Z\}$, where $H'_{j}$ is the birational transform of $H'_{1}$ on $X'_{j}$, then the $(K_{X'_{1}}+\Delta'_{1})$-MMP terminates after finitely many steps or we have ${\rm lim}_{j\to \infty}\lambda'_{j}=0$ when it does not terminate. 

For any $i\geq1$, pick the minimum $k_{i}$ such that $K_{X'_{k_{i}}}+\Delta'_{k_{i}}+e_{i}H'_{k_{i}}$ is nef over $Z$. 
Such $k_{i}$ exists since ${\rm lim}_{j\to \infty}\lambda'_{j}=0$, and we have $k_{1}=1$. 
By construction, the pair $(X'_{k_{i}},\Delta'_{k_{i}}+e_{i}H'_{k_{i}})$ is a good minimal model of $(X'_{1}, \Delta'_{1}+e_{i}H'_{1})$ over $Z$. 
We check that $(X'_{k_{i}},\Delta'_{k_{i}}+e_{i}H'_{k_{i}})$ is a good minimal model of $(X, \Delta+e_{i}H)$ over $Z$. 
Recall that for any $i$, the pair $(X_{i},\Delta_{i}+e_{i}H_{i})$ is a good minimal model of $(X,\Delta+e_{i}H)$ over $Z$, which was constructed at the start of this step, and all $X_{i}$ are isomorphic in codimension one. 
Since we put $X_{1}=X'_{1}$, $X'_{1}$ and $X_{i}$ are isomorphic in codimension one  for any $i$. 
Since ${\rm lim}_{i\to \infty}e_{i}=0$, the divisor $K_{X'_{1}}+\Delta'_{1}$ is the limit of movable divisors over $Z$. 
Then, the $(K_{X'_{1}}+\Delta'_{1})$-MMP contains only flips, and hence $X'_{k_{i}}$ and $X_{i}$ are isomorphic in codimension one. 
By Remark \ref{remmodels}, the pair $(X'_{k_{i}},\Delta'_{k_{i}}+e_{i}H'_{k_{i}})$ is a good minimal model of $(X,\Delta+e_{i}H)$ over $Z$. 

By abuse of notations, we put $X_{i}=X'_{k_{i}}$ (resp.~$\Delta_{i}=\Delta'_{k_{i}}$, $H_{i}=H'_{k_{i}}$) for any $i$. 
Note that after putting them, for any $i\geq 2$, the birational map $X\dashrightarrow X_{i}$ may not be a sequence of steps of the $(K_{X}+\Delta+e_{i}H)$-MMP.  
By construction, $\{e_{i}\}_{i\geq1}$ and 
$$X \dashrightarrow X_{1}\dashrightarrow X_{2}\dashrightarrow \cdots \dashrightarrow X_{i}\dashrightarrow \cdots$$
satisfy (1), (2), (3) and (4) stated at the start of this step. 
Indeed, (1) and (2) follow from the argument in the second paragraph.  
The conditions (3) and (4) follow from the arguments in the fourth paragraph and the fifth paragraph, respectively. 
\end{step}

\begin{step}\label{step3}
Suppose that the above $(K_{X_{1}}+\Delta_{1})$-MMP over $Z$ with scaling of $e_{1}H_{1}$ terminates. 
Then $X_{l}\simeq X_{l+1}\simeq \cdots$ for some $l$, and hence, for any $i\geq l$, the pair $(X_{l},\Delta_{l}+e_{i}H_{l})$ is a good minimal model of $(X,\Delta+e_{i}H)$ over $Z$ by (4) in Step \ref{step2}. 
Then, we have $a(P,X,\Delta+e_{i}H)\leq a(P,X_{l},\Delta_{l}+e_{i}H_{l})$ for any prime divisor $P$ over $X$. 
By considering the limit $i\to \infty$, we have $a(P,X,\Delta)\leq a(P,X_{l},\Delta_{l})$. 
So the pair $(X_{l},\Delta_{l})$ is a weak lc model of $(X,\Delta)$ over $Z$, and thus, we see that $(X,\Delta)$ has a log minimal model over $Z$. 

Therefore, to show the existence of log minimal model of $(X,\Delta)$ over $Z$, we only have to prove the termination of the $(K_{X_{1}}+\Delta_{1})$-MMP.
\end{step}

\begin{step}\label{step4}
Since we have $K_{X_{1}}+\Delta_{1}+e_{1}H_{1}\sim_{\mathbb{R},Z}(1+e_{1})(K_{X_{1}}+\Delta_{1}-\frac{e_{1}}{1+e_{1}}G_{1})$ and ${\rm Supp}G_{1}\subset{\rm Supp}\llcorner \Delta_{1}\lrcorner$, the $(K_{X_{1}}+\Delta_{1})$-MMP only occurs in ${\rm Supp}\llcorner \Delta_{1}\lrcorner$ (see, for example, \cite[Step 2 in the proof of Proposition 5.4]{has-trivial}). 

Suppose that the $(K_{X_{1}}+\Delta_{1})$-MMP does not terminate. 
We get a contradiction by the argument of the special termination (\cite{fujino-sp-ter}). 
We note that $(X_{1},\Delta_{1}+e_{1}H_{1})$ is $\mathbb{Q}$-factorial dlt and any lc center of the pair is an lc center of $(X_{1},\Delta_{1})$. 
Therefore, for any $i$, the pair $(X_{i},\Delta_{i})$ is $\mathbb{Q}$-factorial dlt and any lc center of it is normal. 
There is $m>0$ such that for any lc center $S_{m}$ of $(X_{m},\Delta_{m})$ and  any $i \geq m$, the indeterminacy locus of the birational map $X_{m}\dashrightarrow X_{i}$ does not contain $S_{m}$ and the restriction of the map to $S_{m}$ induces a birational map. 
For any lc center $S_{m}$ of $(X_{m},\Delta_{m})$, let $S_{i}$ be the lc center of $(X_{i},\Delta_{i})$ birational to $S_{m}$, and we define $\Delta_{S_{i}}$ on $S_{i}$ by adjunction $K_{S_{i}}+\Delta_{S_{i}}=(K_{X_{i}}+\Delta_{i})|_{S_{i}}$. 
In this step and the next step, we prove that for any $S_{m}$, there is $i_{0}\geq m$ such that the induced birational map $(S_{i}, \Delta_{S_{i}})\dashrightarrow (S_{i+1}, \Delta_{S_{i+1}})$ is an isomorphism for any $i\geq i_{0}$. 
If we can prove this, the $(K_{X_{1}}+\Delta_{1})$-MMP must terminate (see \cite{fujino-sp-ter}), and we get a contradiction. 

We prove the assertion by induction on the dimension of $S_{m}$. 
Let $\Upsilon_{m}\subset S_{m}$ be an lc center of $(X_{m},\Delta_{m})$. 
As in \cite{fujino-sp-ter}, by replacing $m$, we may assume that for any $i\geq m$, if $\Upsilon_{m}\subsetneq S_{m}$ then the map $(\Upsilon_{m}, \Delta_{\Upsilon_{m}})\dashrightarrow (\Upsilon_{i}, \Delta_{\Upsilon_{i}})$ is an isomorphism. 
Moreover, as in \cite{fujino-sp-ter}, by replacing $m$ again, we may assume that if $\Upsilon_{m}= S_{m}$ then 
the map $\Upsilon_{m}\dashrightarrow \Upsilon_{i}$ is small  and the birational transform of $\Delta_{\Upsilon_{m}}$ on $\Upsilon_{i}$ is $\Delta_{\Upsilon_{i}}$.
Let $(T_{m},\Psi_{m})\to (S_{m},\Delta_{S_{m}})$ be a dlt blow-up. 
We set $H_{T_{m}}$ as the pullback of $H_{m}|_{S_{m}}$ to $T_{m}$. 
By the argument as in \cite{fujino-sp-ter} (see also \cite[Remark 2.10]{birkar-flip}), we obtain a diagram 
$$
\xymatrix
{
(T_{m},\Psi_{m}) \ar[d] \ar@{-->}[r]&
\cdots  \ar@{-->}[r]& (T_{i},\Psi_{i}) \ar[d]\ar@{-->}[r]&\cdots \\
(S_{m},\Delta_{S_{m}})\ar@{-->}[r]&\cdots \ar@{-->}[r]&(S_{i},\Delta_{S_{i}})\ar@{-->}[r]&\cdots
}
$$
such that 
\begin{itemize}
\item
$(T_{i},\Psi_{i})\to(S_{i},\Delta_{S_{i}})$ is a dlt blow-up, and
\item
the upper horizontal sequence of birational maps is a sequence of steps of the $(K_{T_{m}}+\Psi_{m})$-MMP over $Z$ with scaling of $e_{m}H_{T_{m}}$. 
\end{itemize}
We prove that the $(K_{T_{m}}+\Psi_{m})$-MMP over $Z$ must terminate. 
If we can prove this, then we can find $i_{0}\geq m$ such that the induced map $(S_{i}, \Delta_{S_{i}})\dashrightarrow (S_{i+1}, \Delta_{S_{i+1}})$ is an isomorphism for any $i\geq i_{0}$. 
By \cite[Theorem 4.1 (iii)]{birkar-flip}, to prove the termination of the $(K_{T_{m}}+\Psi_{m})$-MMP, it is sufficient to prove that $(T_{m},\Psi_{m})$ has a log minimal model over $Z$. 
Since the morphism $(T_{m},\Psi_{m})\to (S_{m},\Delta_{S_{m}})$ is a dlt blow-up, it is sufficient to prove that $(S_{m},\Delta_{S_{m}})$ has a log minimal model over $Z$. 
\end{step}

\begin{step}\label{step5}
We prove that $(S_{m},\Delta_{S_{m}})$ has a log minimal model over $Z$ by using the induction hypothesis of Theorem \ref{thmrelmmp}. 
Since $(X_{m},\Delta_{m})$ is $\mathbb{Q}$-factorial dlt, $S_{m}$ and all lc centers of $(S_{m},\Delta_{S_{m}})$ are lc centers of $(X_{m},\Delta_{m})$ contained in $S_{m}$. 
Since the divisors $K_{S_{i}}+\Delta_{S_{i}}+e_{i}H_{i}|_{S_{i}}$ are nef over $Z$ and since the map $S_{m}\dashrightarrow S_{i}$ is small, 
by recalling ${\rm lim}_{i\to \infty}e_{i}=0$, we see that 
$K_{S_{m}}+\Delta_{S_{m}}$ is pseudo-effective over $Z$. 
From these facts, it is sufficient to check that $-(K_{X_{m}}+\Delta_{m})|_{\Upsilon_{m}}$ is pseudo-effective over $Z$ for any lc center $\Upsilon_{m}\subset S_{m}$ of $(X_{m},\Delta_{m})$.  

Recall that for any $i\geq m$ and any lc center $\Upsilon_{m}\subset S_{m}$ of $(X_{m},\Delta_{m})$, the induced map $\Upsilon_{m}\dashrightarrow \Upsilon_{i}$ is in particular small and the birational transform of $\Delta_{\Upsilon_{m}}$ on $\Upsilon_{i}$ is $\Delta_{\Upsilon_{i}}$
We put $H_{\Upsilon_{i}}=H_{i}|_{\Upsilon_{i}}$. 
Then $H_{\Upsilon_{i}}\geq0$ and the birational transform of $H_{\Upsilon_{m}}$ on $\Upsilon_{i}$ is $H_{\Upsilon_{i}}$. 
By construction of the map $(X,\Delta+e_{i}H)\dashrightarrow(X_{i},\Delta_{i}+e_{i}H_{i})$ (see (2) and (3) in Step \ref{step2}), there is an lc center $\Upsilon$ of $(X,\Delta)$ such that the birational map $X\dashrightarrow X_{i}$ induces a birational map $\Upsilon\dashrightarrow \Upsilon_{i}$. 
We put $H_{\Upsilon}=H|_{\Upsilon}$, and we define $\Delta_{\Upsilon}$ on $\Upsilon$ by adjunction $K_{\Upsilon}+\Delta_{\Upsilon}=(K_{X}+\Delta)|_{\Upsilon}$. 
Then $H_{\Upsilon}\geq0$. 
By (2) and (3) in Step \ref{step2}, for any $i\geq m$, there is a common log resolution $Y_{i}\to X$ and $Y_{i}\to {X_{i}}$ of $X\dashrightarrow X_{i}$ and a subvariety $\Upsilon_{Y_{i}}\subset Y_{i}$ birational to $\Upsilon$ and $\Upsilon_{i}$ such that the induced morphisms $\Upsilon_{Y_{i}}\to \Upsilon$ and $\Upsilon_{Y_{i}}\to \Upsilon_{i}$ form a common resolution of the map $\Upsilon\dashrightarrow \Upsilon_{i}$. 
Using (4) in Step \ref{step2} and the negativity lemma, by taking pullbacks of $K_{X}+\Delta+e_{i}H$ and $K_{X_{i}}+\Delta_{i}+e_{i}H_{i}$ to $\Upsilon_{Y_{i}}$ and comparing coefficients, we see that $a(Q,\Upsilon,\Delta_{\Upsilon}+e_{i}H_{\Upsilon})\leq a(Q,\Upsilon_{i},\Delta_{\Upsilon_{i}}+e_{i}H_{\Upsilon_{i}})$ for any prime divisor $Q$ over $\Upsilon$. 

Since $(X_{m},\Delta_{m})$ is $\mathbb{Q}$-factorial dlt, the pair $(\Upsilon_{m},\Delta_{\Upsilon_{m}})$ is dlt. 
So there is a small $\mathbb{Q}$-factorialization $\Upsilon'\to \Upsilon_{m}$. 
Then, $\Upsilon'$ and $\Upsilon_{i}$ are isomorphic in codimension one for any $i\geq m$ because $\Upsilon_{m}$ and $\Upsilon_{i}$ are isomorphic in codimension one. 
We denote the pullback of $K_{\Upsilon_{m}}+\Delta_{\Upsilon_{m}}$ to $\Upsilon'$ by $K_{\Upsilon'}+\Delta_{\Upsilon'}$. 
We take a common resolution $\varphi\colon\overline{\Upsilon}\to \Upsilon$ and $\varphi'\colon\overline{\Upsilon}\to \Upsilon'$ of the birational map $\Upsilon\dashrightarrow \Upsilon'$. 
For any $i\geq m$, we take a common resolution $\tau\colon\overline{\Upsilon}_{i}\to \overline{\Upsilon}$ and $\tau_{i}\colon\overline{\Upsilon}_{i}\to \Upsilon_{i}$ of the birational map $\overline{\Upsilon}\dashrightarrow \Upsilon_{i}$. 
We have the following diagram.
$$
\xymatrix@R=12pt
{
&\overline{\Upsilon}\ar[ddl]_{\varphi}\ar[d]^{\varphi'}&\overline{\Upsilon}_{i}\ar[l]_{\tau}\ar[ddr]^{\tau_{i}}\\
&\Upsilon' \ar[d]\\
\Upsilon\ar@{-->}[r]&\Upsilon_{m}\ar@{-->}[r]&\cdots \ar@{-->}[r]&\Upsilon_{i}\ar@{-->}[r]&\cdots
}
$$
Since we have $a(Q,\Upsilon,\Delta_{\Upsilon}+e_{i}H_{\Upsilon})\leq a(Q,\Upsilon_{i},\Delta_{\Upsilon_{i}}+e_{i}H_{\Upsilon_{i}})$ for any prime divisor $Q$ over $\Upsilon$, we have 
$$\tau^{*}\varphi^{*}(K_{\Upsilon}+\Delta_{\Upsilon}+e_{i}H_{\Upsilon})-\tau_{i}^{*}(K_{\Upsilon_{i}}+\Delta_{\Upsilon_{i}}+e_{i}H_{\Upsilon_{i}})\geq0.$$
Therefore, 
$$-\tau_{i}^{*}(K_{\Upsilon_{i}}+\Delta_{\Upsilon_{i}})+e_{i}\tau^{*}\varphi^{*}H_{\Upsilon}\geq-\tau^{*}\varphi^{*}(K_{\Upsilon}+\Delta_{\Upsilon})+e_{i}\tau_{i}^{*}H_{\Upsilon_{i}}.$$
By the hypothesis of Theorem \ref{thmrelmmp}, $-(K_{\Upsilon}+\Delta_{\Upsilon})$ is pseudo-effective over $Z$. 
Thus, the divisor $-\tau_{i}^{*}(K_{\Upsilon_{i}}+\Delta_{\Upsilon_{i}})+e_{i}\tau^{*}\varphi^{*}H_{\Upsilon}$ is pseudo-effective over $Z$ since $H_{\Upsilon_{i}}\geq0$. 
We have $\varphi'_{*}\tau_{*}\tau_{i}^{*}(K_{\Upsilon_{i}}+\Delta_{\Upsilon_{i}})=K_{\Upsilon'}+\Delta_{\Upsilon'}$ since $\Upsilon'$ and $\Upsilon_{i}$ are isomorphic in codimension one. 
By taking the birational transform on $\Upsilon'$, we see that the divisor 
$-(K_{\Upsilon'}+\Delta_{\Upsilon'})+e_{i}\varphi'_{*}\varphi^{*}H_{\Upsilon}$
 is pseudo-effective over $Z$ for any $i$. 
Note that $\Upsilon'$ is $\mathbb{Q}$-factorial. 
Since ${\rm lim}_{i\to \infty}e_{i}=0$, the divisor $-(K_{\Upsilon'}+\Delta_{\Upsilon'})$ is pseudo-effective over $Z$. 
So we see that $-(K_{\Upsilon_{m}}+\Delta_{\Upsilon_{m}})$ is pseudo-effective over $Z$. 

In this way, the restriction $-(K_{X_{m}}+\Delta_{m})|_{\Upsilon_{m}}$ is pseudo-effective over $Z$ for any lc center $\Upsilon_{m}\subset S_{m}$ of $(X_{m},\Delta_{m})$. 
By the induction hypothesis of Theorem \ref{thmrelmmp}, $(S_{m},\Delta_{S_{m}})$ has a log minimal model over $Z$. 
Therefore, we can find $i_{0}\geq m$ such that the induced birational map $(S_{i}, \Delta_{S_{i}})\dashrightarrow (S_{i+1}, \Delta_{S_{i+1}})$ is an isomorphism for any $i\geq i_{0}$ (see Step \ref{step4}). 
Then, by the argument of the special termination (\cite{fujino-sp-ter}), the $(K_{X_{1}}+\Delta_{1})$-MMP over $Z$ must terminate. 

In this way, we see that $(X,\Delta)$ has a log minimal model (see Step \ref{step3}). 
\end{step}

\begin{step}\label{step6}
By running the $(K_{X}+\Delta)$-MMP over $Z$, we can obtain a log minimal model $(X,\Delta)\dashrightarrow(X_{\rm min},\Delta_{\rm min})$ over $Z$. 
Then, the numerical dimension of $K_{X_{\rm min}}+\Delta_{\rm min}$ over $Z$ is $0$, and for any lc center $S'$ of $(X_{\rm min},\Delta_{\rm min})$, the numerical dimension of $(K_{X_{\rm min}}+\Delta_{\rm min})|_{S'}$ over $Z$ is $0$. 
Since $X$ and $Z$ are both projective, we can apply Lemma \ref{lemneflogabund}. 
Therefore, the divisor $K_{X_{\rm min}}+\Delta_{\rm min}$ is semi-ample, and $(X_{\rm min},\Delta_{\rm min})$ is a good minimal model over $Z$. 
\end{step}
So we are done. 
\end{proof}

The following result is not used in this paper, but it is interesting on its own. 

\begin{cor}
Let $\pi\colon X\to Z$ be a projective morphism of normal quasi-projective varieties, and let $(X,\Delta)$ be an lc pair. 
Suppose that there is an $\mathbb{R}$-divisor $B\geq0$ on $X$ such that
\begin{itemize}
\item
$-(K_{X}+\Delta+B)$ is nef over $Z$, and 
\item 
$(X,\Delta+\epsilon B)$ is lc for a real number $\epsilon>0$.  
\end{itemize}
Then, $(X,\Delta)$ has a good minimal model or a Mori fiber space over $Z$. 
\end{cor}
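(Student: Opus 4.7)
The plan is to apply Theorem \ref{thmrelmmp} to $(X,\Delta)$ over $Z$; all that is needed is to verify the two pseudo-effectivity hypotheses, after which the conclusion follows at once.

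For the first hypothesis, note that $B$ is $\mathbb{R}$-Cartier, being the difference of the $\mathbb{R}$-Cartier divisors $K_X+\Delta+B$ and $K_X+\Delta$. The decomposition
$$-(K_X+\Delta)=-(K_X+\Delta+B)+B$$
writes $-(K_X+\Delta)$ as the sum of a $\pi$-nef and an effective $\mathbb{R}$-Cartier divisor, so it is pseudo-effective over $Z$.

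For the second hypothesis, let $S$ be an lc center of $(X,\Delta)$ with normalization $\nu\colon S^\nu\to X$. The main technical point, which I expect to be the only nontrivial step, is to show that $S\not\subset\Supp(B)$. To see this, take a log resolution $f\colon Y\to X$ of $(X,\Delta+B)$ and a prime divisor $P$ on $Y$ with $f(P)=S$ that is an lc place of $(X,\Delta)$. Writing $f^{*}(K_X+\Delta)=K_Y+\Gamma$ and $f^{*}B=E\geq 0$, the lc-ness of $(X,\Delta+\epsilon B)$ forces every coefficient of $\Gamma+\epsilon E$ to be at most $1$, and since $P$ already has coefficient $1$ in $\Gamma$, its coefficient in $E$ must vanish. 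Hence the generic point of $P$ lies outside $\Supp(f^{*}B)=f^{-1}(\Supp(B))$, and therefore $S=f(P)\not\subset\Supp(B)$.

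Once this containment is in hand, $\nu^{*}B$ is a well-defined effective $\mathbb{R}$-Cartier divisor on $S^\nu$, and the analogous decomposition
$$\nu^{*}(-(K_X+\Delta))=\nu^{*}(-(K_X+\Delta+B))+\nu^{*}B$$
expresses the left-hand side as a sum of a divisor nef over $Z$ and an effective divisor, hence pseudo-effective over $Z$. Both hypotheses of Theorem \ref{thmrelmmp} are now verified, and the theorem yields a good minimal model or a Mori fiber space of $(X,\Delta)$ over $Z$.
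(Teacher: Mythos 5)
Your proposal is correct and takes exactly the route of the paper, whose entire proof consists of the remark that the hypotheses of Theorem \ref{thmrelmmp} can be checked; your argument (the decomposition $-(K_X+\Delta)=-(K_X+\Delta+B)+B$ and the observation that no lc center of $(X,\Delta)$ is contained in $\Supp B$, forced by the log canonicity of $(X,\Delta+\epsilon B)$) supplies precisely that check. One minor point: choose the log resolution $f\colon Y\to X$ high enough that an lc place of $(X,\Delta)$ with center $S$ actually appears as a divisor $P$ on $Y$ (e.g.\ by blowing up the relevant stratum), since on an arbitrary log resolution $S$ need only be the image of a stratum of the coefficient-one part.
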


\begin{proof}
We can check that the morphism $(X,\Delta)\to Z$ satisfies the hypothesis of Theorem \ref{thmrelmmp}. 
Therefore, the corollary follows from Theorem \ref{thmrelmmp}. 
\end{proof}

\section{Pseudo-lc pairs}\label{sec4}

In this section, a pair $\langle X,\Delta \rangle$ simply denotes a pair of a normal variety $X$ and an $\mathbb{R}$-divisor $\Delta\geq0$ on it. 
In particular, we do not assume $K_{X}+\Delta$ to be $\mathbb{R}$-Cartier. 
When $K_{X}+\Delta$ is $\mathbb{R}$-Cartier, we denote the pair of $X$ and $\Delta$ by $(X,\Delta)$ as usual. 

\begin{defn}\label{defnalmostdiscrepancy}
Let $\langle X,\Delta \rangle$ be a pair and let $P$ be a prime divisor over $X$, that is, a prime divisor on a higher birational model $Y\to X$. 
We define the {\em discrepancy} $\alpha(P,X,\Delta)$ of $P$ with respect to $\langle X,\Delta \rangle$ as follows: 

We fix $K_{X}$ as a Weil divisor. 
We denote the image of $P$ on $X$ by $c_{X}(P)$. 
Let $f\colon Y\to X$ be a projective birational morphism from a normal variety $Y$ such that $P$ is a prime divisor on $Y$. 
We fix $K_{Y}$ so that $f_{*}K_{Y}=K_{X}$ as Weil divisors. 
The divisor $K_{Y}$ depends on the choice of $K_{X}$. 
For any affine open subset $U\subset X$ such that $U\cap c_{X}(P)\neq \emptyset$, we put $K_{U}=K_{X}|_{U}$, $V=f^{-1}(U)$, $f_{V}=f|_{V}$ and $K_{V}=K_{Y}|_{V}$. 
For any $\mathbb{R}$-divisor $B_{U}\geq0$ on $U$ such that $K_{U}+\Delta|_{U}+B_{U}$ is $\mathbb{R}$-Cartier, we define 
$$\alpha_{\langle X,\Delta \rangle}(P,U,B_{U})={\rm coeff}_{P|_{V}}\bigl(K_{V}-f_{V}^{*}(K_{U}+\Delta|_{U}+B_{U})\bigr).$$
By the standard argument, $\alpha_{\langle X,\Delta \rangle}(P,U,B_{U})$ does not depend on the choice of $K_{X}$ and $f\colon Y\to X$. 
We define
$$\alpha(P,X,\Delta):=\underset{U,B_{U}}{\rm sup}\{\alpha_{\langle X,\Delta \rangle}(P,U,B_{U})\},$$
where $U$ runs over all affine open subsets of $X$ such that $U\cap c_{X}(P)\neq \emptyset$, and $B_{U}$ runs over all effective $\mathbb{R}$-divisors on $U$ such that $K_{U}+\Delta|_{U}+B_{U}$ is $\mathbb{R}$-Cartier. 
\end{defn}

\begin{defn}\label{defnalmostpair}
Let $\langle X,\Delta \rangle$ be a pair. 
We say the pair $\langle X,\Delta \rangle$ is {\em pseudo-lc} if the inequality $\alpha(P,X,\Delta)\geq -1$ holds for any prime divisor $P$ over $X$. 
\end{defn}

We show three basic properties of discrepancy defined above.

\begin{lem}\label{lembasic}
Let $\langle X,\Delta \rangle$ be a pair and $P$ be a prime divisor over $X$. 
\begin{enumerate}
\item[(i)]
If $K_{X}+\Delta$ is $\mathbb{R}$-Cartier, then $\alpha(P,X,\Delta)=a(P,X,\Delta)$, where the right hand side is the usual discrepancy. 
\item[(ii)]
If  $P$ is a divisor on $X$, then $\alpha(P,X,\Delta)=-{\rm coeff}_{P}(\Delta)$. 
\item[(iii)]
Let $0\leq\Delta'\leq\Delta$ be an $\mathbb{R}$-divisor. 
Then $\alpha(P,X,\Delta)\leq \alpha(P,X,\Delta')$.
\end{enumerate}
In particular, if $\langle X,\Delta \rangle$ is pseudo-lc and $K_{X}+\Delta$ is $\mathbb{R}$-Cartier, then $(X,\Delta)$ is lc. 
\end{lem}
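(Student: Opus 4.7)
The plan is to treat the three clauses in order; each reduces, after unwinding Definition~\ref{defnalmostdiscrepancy}, to a short manipulation, with one genuinely geometric construction needed for (ii). For (i), since $K_{X}+\Delta$ is $\mathbb{R}$-Cartier, the choice $B_{U}=0$ is always admissible and the defining formula reduces to the usual discrepancy $a(P,X,\Delta)$, so $\alpha(P,X,\Delta)\ge a(P,X,\Delta)$. Conversely, for any admissible $(U,B_{U})$, the $\mathbb{R}$-divisor $B_{U}=(K_{U}+\Delta|_{U}+B_{U})-(K_{U}+\Delta|_{U})$ is $\mathbb{R}$-Cartier and effective, hence $f_{V}^{\,*}B_{U}\ge0$ and
\[
K_{V}-f_{V}^{\,*}(K_{U}+\Delta|_{U}+B_{U})=\bigl(K_{V}-f_{V}^{\,*}(K_{U}+\Delta|_{U})\bigr)-f_{V}^{\,*}B_{U},
\]
whose coefficient at $P|_{V}$ is $a(P,X,\Delta)-{\rm coeff}_{P|_{V}}(f_{V}^{\,*}B_{U})\le a(P,X,\Delta)$. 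Taking the supremum gives the reverse inequality, hence equality.

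For (ii), with $P$ a prime divisor on $X$ itself, take $Y=X$ and $f=\mathrm{id}$ in the defining formula; then $V=U$, $K_{V}=K_{U}$, and
\[
\alpha_{\langle X,\Delta\rangle}(P,U,B_{U})=-{\rm coeff}_{P}(\Delta)-{\rm coeff}_{P}(B_{U})\le -{\rm coeff}_{P}(\Delta).
\]
For the reverse inequality I must exhibit admissible pairs with ${\rm coeff}_{P}(B_{U})=0$. Starting from any admissible $(U_{0},B_{U_{0}})$, shrink $U_{0}$ to an affine neighborhood $U$ of a general point of $P$ on which $P$ is a Cartier divisor cut out by a single equation $g=0$, chosen so that $\mathrm{div}(g)|_{U}=P\cap U$ with no extra components, and set
\[
B:=B_{U_{0}}|_{U}-{\rm coeff}_{P}(B_{U_{0}})\cdot\mathrm{div}(g)|_{U}.
\]
Then $B\ge0$ (the $P$-coefficient drops from ${\rm coeff}_{P}(B_{U_{0}})$ to $0$, and no other component of $B_{U_{0}}$ is affected), the sum $K_{U}+\Delta|_{U}+B$ differs from $(K_{U_{0}}+\Delta|_{U_{0}}+B_{U_{0}})|_{U}$ by a principal divisor and so remains $\mathbb{R}$-Cartier, and ${\rm coeff}_{P}(B)=0$. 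This admissible pair realizes the upper bound. This construction of $B$ is the main geometric step of the proof.

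For (iii), if $(U,B_{U})$ is admissible for $\langle X,\Delta\rangle$ then $B'_{U}:=B_{U}+(\Delta-\Delta')|_{U}\ge0$ and $K_{U}+\Delta'|_{U}+B'_{U}=K_{U}+\Delta|_{U}+B_{U}$ is $\mathbb{R}$-Cartier, so $(U,B'_{U})$ is admissible for $\langle X,\Delta'\rangle$ and the two formulas produce the same coefficient at $P|_{V}$; taking the supremum gives $\alpha(P,X,\Delta)\le\alpha(P,X,\Delta')$. The final ``in particular'' clause is immediate from (i): under its hypothesis the pseudo-lc condition $\alpha(P,X,\Delta)\ge-1$ becomes $a(P,X,\Delta)\ge-1$ for every prime divisor $P$ over $X$, which is exactly the lc condition on $(X,\Delta)$.
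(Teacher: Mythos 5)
Your proofs of (i), (iii) and the final clause coincide with the paper's argument: $B_{U}=0$ gives one inequality in (i), effectivity of $f_{V}^{*}B_{U}$ gives the other, and in (iii) the reassignment $B'_{U}=B_{U}+(\Delta-\Delta')|_{U}$ is exactly the paper's identity $\alpha_{\langle X,\Delta\rangle}(P,U,B_{U})=\alpha_{\langle X,\Delta'\rangle}(P,U,(\Delta-\Delta')|_{U}+B_{U})$. The only divergence is the lower bound in (ii), where the paper simply picks an affine open $U$ meeting $P$ inside the smooth locus of $X$ (which exists by normality, since $P$ has codimension one) and takes $B_{U}=0$; this single choice both shows the supremum is over a nonempty set and realizes the value $-{\rm coeff}_{P}(\Delta)$. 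Your route --- correcting an arbitrary admissible $B_{U_{0}}$ by subtracting ${\rm coeff}_{P}(B_{U_{0}})\cdot\mathrm{div}(g)$ for a local equation $g$ of $P$ --- is also valid, since subtracting an $\mathbb{R}$-multiple of a principal divisor preserves $\mathbb{R}$-Cartierness, but it silently presupposes that at least one admissible pair $(U_{0},B_{U_{0}})$ exists; that presupposition is needed for the stated equality (otherwise the supremum over the empty set would be $-\infty$) and is most easily discharged by the same smooth-locus observation, which then makes your correction step unnecessary. Everything else is fine.
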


\begin{proof}
These are proved by the standard arguments. 

Firstly, we prove (i). The inequality $\alpha(P,X,\Delta)\geq a(P,X,\Delta)$ follows from the definition of $\alpha(P,X,\Delta)$. 
So we prove the inverse inequality.  
Let $f\colon Y\to X$ be a projective birational morphism such that $P$ is a prime divisor on $Y$. Let $U$ be an affine open subset of $X$ such that $U\cap c_{X}(P)\neq \emptyset$. We set $V=f^{-1}(U)$ and $f_{V}=f|_{V}$. 
For any $\mathbb{R}$-Cartier $\mathbb{R}$-divisor $B_{U} \geq 0$ on $U$,  $a(P,X,\Delta)-\alpha_{\langle X,\Delta \rangle}(P,U,B_{U})$ is the coefficient of $P|_{V}$ in 
$$(K_{V}-f_{V}^{*}(K_{U}+\Delta|_{U}))-(K_{V}-f_{V}^{*}(K_{U}+\Delta|_{U}+B_{U}))\geq0. $$ Hence, we have $a(P,X,\Delta)\geq\alpha_{\langle X,\Delta \rangle}(P,U,B_{U})$ for any $U$ and $B_{U}$. By taking the supremum, we have $a(P,X,\Delta)\geq \alpha(P,X,\Delta)$. 
So the equality holds. 

Secondly, we show (ii). For any affine open subset $U\subset X$ with $P\cap U\neq \emptyset$ and any $\mathbb{R}$-divisor $B_{U}\geq0$ on $U$ such that $K_{U}+\Delta|_{U}+B_{U}$ is $\mathbb{R}$-Cartier, we have $\alpha_{\langle X,\Delta \rangle}(P,U,B_{U})\leq -{\rm coeff}_{P}(\Delta)$. 
Then $\alpha(P,X,\Delta)\leq-{\rm coeff}_{P}(\Delta)$ by Definition \ref{defnalmostdiscrepancy}. 
We pick an affine open subset $U$ such that $P\cap U\neq \emptyset$ and $U$ is contained in the smooth locus of $X$. 
Such $U$ exists since $X$ is normal. 
Then, the divisor $K_{U}+\Delta|_{U}$ is $\mathbb{R}$-Cartier, and we have $\alpha_{\langle X,\Delta \rangle}(P,U,0)=-{\rm coeff}_{P}(\Delta)$. 
By Definition \ref{defnalmostdiscrepancy}, we have $\alpha(P,X,\Delta)\geq-{\rm coeff}_{P}(\Delta)$. 
Thus, the equality of (ii) holds. 

Finally, we show (iii). 
Put $G=\Delta-\Delta'\geq0$, and pick any prime divisor $P$ over $X$. 
For any affine open subset $U\subset X$ with $U\cap c_{X}(P)\neq \emptyset$ and any $\mathbb{R}$-divisor $B_{U}\geq 0$ on $U$ such that $K_{U}+\Delta|_{U}+B_{U}$ is $\mathbb{R}$-Cartier, we have $$\alpha_{\langle X,\Delta\rangle}(P,U,B_{U})=\alpha_{\langle X,\Delta'\rangle}(P,U,G|_{U}+B_{U})\leq \alpha(P,X,\Delta')$$ by Definition \ref{defnalmostdiscrepancy}. 
By taking the supremum, we have $\alpha(P,X,\Delta)\leq \alpha(P,X,\Delta')$. 
So we are done. 
\end{proof}

By Lemma \ref{lembasic}, we see that the usual lc pairs and potentially lc pairs are pseudo-lc pairs (for definition of potentially lc pairs, see \cite[Definition 17]{kollar-logpluri}). 
We will see later that the notion of pseudo-lc singularity is closely related to log canonical singularity introduced in \cite{dfh} and generalized lc pairs introduced in  \cite{bz} (Proposition \ref{propdfh} and Proposition \ref{propgeneralizedlc}). 

Before showing results on pseudo-lc pairs, we recall notations and definitions in \cite{dfh}. 
In \cite{dfh}, de Fernex and Hacon defined log canonical and log terminal singularities for pairs $(X, \sum a_{i}Z_{i})$ of a normal quasi-projective variety $X$ and a formal $\mathbb{R}_{\geq 0}$-linear combination 
$\sum a_{i}Z_{i}$ of subschemes $Z_{i}\subset X$. 
In this paper, we deal with the case when the subscheme part is zero. 
We note that the definition of log canonical singularity in the sense of \cite{dfh} (Definition \ref{defndfh}) is only used for comparison to pseudo-lc singularity. 

\begin{note}\label{note4.4}
For any prime divisor $P$ over $X$, we denote by $v_{P}\colon \mathbb{C}(X)\to \mathbb{Z}$ the corresponding divisorial valuation on the field of rational functions $\mathbb{C}(X)$. 

Firstly, for any Weil divisor $D$ on $X$, we define 
\begin{equation}
v_{P}^{\natural}(D):={\rm min}\left\{ v_{P}(\phi)
 \left|\begin{array}{l}\phi \in \mathcal{O}_{X}( -D)(U), U\subset X {\rm \;is\;open},\\U\cap c_{X}(P)\neq \emptyset.\end{array}\right.\right\}. 
 \end{equation}
(\cite[Definition 2.1 and Definition 2.2]{dfh}).

Secondly, for any birational morphism $f\colon Y \to X$ from a normal variety $Y$, we define
\begin{equation}
f^{\natural}D:=\sum_{\substack {E:{\rm \,prime\,divisor}\\{\rm on\,}Y}}v_{E}^{\natural}(D)E
\end{equation}
(\cite[Definition 2.6]{dfh}).

Thirdly, for any $\mathbb{Q}$-divisor $D$, we can define $v_{P}(D)$ by
\begin{equation}
v_{P}(D):=\underset{k\geq 1}{\rm inf}\frac{v_{P}^{\natural}(kD)}{k}=\underset{k\to \infty}{\rm lim}{\rm inf}\frac{v_{P}^{\natural}(kD)}{k}=\underset{k\to \infty}{\rm lim}\frac{v_{P}^{\natural}(k!D)}{k!}
\end{equation}
(\cite[Lemma 2.8 and Definition 2.9]{dfh}). 
When $D$ is $\mathbb{Q}$-Cartier, $v_{P}(D)$ coincides with the usual valuation along $P$.

Fourthly, we fix a birational morphism $f\colon Y \to X$ from a normal variety $Y$ and Weil divisors $K_{X}$ and $K_{Y}$ such that $f_{*}K_{Y}=K_{X}$. 
For any $m\geq 1$, we put
\begin{equation}
K_{m, Y/X}:=K_{Y}-\frac{1}{m}f^{\natural}(m K_{X})
\end{equation}
(\cite[Definition 3.1]{dfh}). 
Note that $K_{m, Y/X}$ does not depend on the choice of $K_{X}$ and $K_{Y}$.

Finally, for any $\mathbb{Q}$-divisor $D$ on $X$ and any birational morphism $f\colon Y \to X$ from a normal variety $Y$, the pullback of $D$ is defined by
\begin{equation}f^{*}D:=\sum_{\substack {E:{\rm \,prime\,divisor}\\{\rm on\,}Y}}v_{E}(D)E,\end{equation}
where $v_{E}(D)$ is as in Notation \ref{note4.4} (3). 
If $D$ is $\mathbb{Q}$-Cartier, $f^{*}D$ coincides with the usual pullback.
\end{note}

\begin{defn}[{\cite[Definition 7.1]{dfh}}]\label{defndfh}
Let $X$ be a normal variety. 
We say the pair $\langle X,0 \rangle$ is {\em log canonical} {\em in the sense of} {\cite{dfh}} if $X$ is quasi-projective and there is $m\geq 1$ such that 
for any projective birational morphism $f\colon Y \to X$ and any prime divisor $P$ on $Y$, coefficient of $P$ in $K_{m, Y/X}$ is not less than $-1$. 
\end{defn}

By \cite[Proposition 7.2]{dfh}, $\langle X,0 \rangle$ is log canonical in the sense of \cite{dfh} if and only if there is a  boundary $\mathbb{Q}$-divisor $\Delta$ such that $K_{X}+\Delta$ is $\mathbb{Q}$-Cartier and $(X,\Delta)$ is lc. 
Therefore, by Lemma \ref{lembasic} (i) and (iii), if $X$ is log canonical in the sense of \cite{dfh} then $\langle X,0 \rangle$ is pseudo-lc. 

Moreover, we also have the following statement: 

\begin{prop}\label{propdfh}
Let $\langle X,\Delta=\sum d_{i}\Delta_{i} \rangle$ be a pair, where $\Delta_{i}$ are (not necessarily prime or effective) Weil divisors. 
If there is $m\geq1$ such that 
$${\rm coeff}_{P}(K_{m, Y/X})-\sum d_{i}\cdot v_{P}^{\natural}(\Delta_{i})\geq-1$$
for any projective birational morphism $f\colon Y \to X$ and any prime divisor $P$ on $Y$,  
then $\langle X,\Delta \rangle$ is pseudo-lc. 
\end{prop}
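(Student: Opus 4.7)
The plan is to prove that $\alpha(P, X, \Delta) \geq -1$ for every prime divisor $P$ over $X$. Since $\alpha(P, X, \Delta)$ is defined as a supremum, it is enough, for each $P$, to exhibit a single pair $(U, B_U)$ with $U \subset X$ an affine open meeting $c_{X}(P)$, $B_U \geq 0$, and $K_U + \Delta|_U + B_U$ being $\mathbb{R}$-Cartier, such that $\alpha_{\langle X,\Delta\rangle}(P, U, B_U)$ equals the quantity ${\rm coeff}_P(K_{m,Y/X}) - \sum_i d_i v_P^{\natural}(\Delta_i)$ appearing in the hypothesis.

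Fix a projective birational morphism $f\colon Y \to X$ on which $P$ lives and compatible $K_Y, K_X$. By the very definition of $v_P^{\natural}$, we may shrink to a common affine open $U$ meeting $c_X(P)$ on which actual sections $\phi \in \mathcal{O}_X(-mK_X)(U)$ with $v_P(\phi) = v_P^{\natural}(mK_X)$ and $\psi_i \in \mathcal{O}_X(-\Delta_i)(U)$ with $v_P(\psi_i) = v_P^{\natural}(\Delta_i)$ simultaneously attain each minimum (intersect finitely many affine opens on which the respective minima are attained, shrinking once more if necessary). Set
$$D := \tfrac{1}{m}{\rm div}(\phi) + \sum_i d_i \, {\rm div}(\psi_i),$$
a principal and hence $\mathbb{R}$-Cartier $\mathbb{R}$-divisor on $U$. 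The defining inclusions translate into ${\rm div}(\phi) \geq mK_X|_U$ and ${\rm div}(\psi_i) \geq \Delta_i|_U$ as Weil divisors on $U$, so, assuming $d_i \geq 0$, we obtain $D \geq K_X|_U + \sum_i d_i \Delta_i|_U = K_U + \Delta|_U$. Put $B_U := D - K_U - \Delta|_U$; then $B_U \geq 0$ and $K_U + \Delta|_U + B_U = D$ is $\mathbb{R}$-Cartier, as required.

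Set $V := f^{-1}(U)$ and $f_V := f|_V$. The coefficient of $P|_V$ in the pullback $f_V^*{\rm div}(\phi)$ is, by the definition of the divisorial valuation, exactly $v_P(\phi)$; hence
$${\rm coeff}_{P|_V}(f_V^*D) = \tfrac{1}{m}v_P^{\natural}(mK_X) + \sum_i d_i v_P^{\natural}(\Delta_i).$$
Combining this with ${\rm coeff}_{P|_V}(K_V) = {\rm coeff}_P(K_Y)$ and the identity ${\rm coeff}_P(K_{m,Y/X}) = {\rm coeff}_P(K_Y) - \tfrac{1}{m}v_P^{\natural}(mK_X)$ yields
$$\alpha_{\langle X,\Delta\rangle}(P, U, B_U) = {\rm coeff}_P(K_{m,Y/X}) - \sum_i d_i v_P^{\natural}(\Delta_i) \geq -1$$
by the hypothesis, giving $\alpha(P, X, \Delta) \geq -1$.

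The main technical point is ensuring simultaneous attainment of all the $v_P^{\natural}$-minima on a single common affine open $U$ meeting $c_X(P)$; this is a matter of unpacking the definition of $v_P^{\natural}$ as a minimum and shrinking finitely many times. When some $d_i$ is negative, the inequality ${\rm div}(\psi_i) \geq \Delta_i|_U$ flips under multiplication by $d_i$; the remedy is to instead pick $\tilde\psi_i \in \mathcal{O}_X(\Delta_i)(U)$ attaining $v_P^{\natural}(-\Delta_i)$ and to replace $d_i{\rm div}(\psi_i)$ in $D$ by $|d_i|{\rm div}(\tilde\psi_i)$, at the price of comparing the resulting bound with the hypothesis via the pairing inequality $v_P^{\natural}(W) + v_P^{\natural}(-W) \geq 0$ that comes from the product map $\mathcal{O}_X(-W)\otimes\mathcal{O}_X(W) \to \mathcal{O}_X$.
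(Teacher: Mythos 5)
Your argument is correct and is essentially identical to the paper's own proof: the paper likewise chooses, on a common affine open $U$ meeting $c_{X}(P)$, rational functions $\phi_{0}\in\mathcal{O}_{X}(-mK_{X})(U)$ and $\phi_{i}\in\mathcal{O}_{X}(-\Delta_{i})(U)$ attaining $v_{P}^{\natural}(mK_{X})$ and $v_{P}^{\natural}(\Delta_{i})$, sets $B_{U}=\frac{1}{m}({\rm div}(\phi_{0})-mK_{U})+\sum d_{i}({\rm div}(\phi_{i})-\Delta_{i}|_{U})$, and computes $\alpha_{\langle X,\Delta\rangle}(P,U,B_{U})={\rm coeff}_{P}(K_{m,Y/X})-\sum d_{i}v_{P}^{\natural}(\Delta_{i})\geq -1$, exactly as you do. One caveat: your closing remark on negative $d_{i}$ does not actually close that case, since the pairing inequality $v_{P}^{\natural}(W)+v_{P}^{\natural}(-W)\geq 0$ bounds your modified discrepancy from \emph{above} by the hypothesized quantity rather than from below; however, the paper's proof also tacitly requires $d_{i}\geq 0$ (it needs $B_{U}=B_{0}+\sum d_{i}B_{i}\geq 0$), consistent with the de Fernex--Hacon convention of $\mathbb{R}_{\geq 0}$-linear combinations, so this case is not really in scope.
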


\begin{proof}
We pick any $P$ over $X$ and fix $f\colon Y \to X$ such that $P$ is a prime divisor on $Y$. 
By Notation \ref{note4.4} (1), there is an open subset $U_{0}\subset X$ and a rational function $\phi_{0}$ such that $U_{0}\cap c_{X}(P)\neq \emptyset$, $\phi_{0} \in \mathcal{O}_{X}(-mK_{X})(U_{0})$ and $v_{P}(\phi_{0})=v_{P}^{\natural}(mK_{X})$. 
Similarly, for any $i$, we can find $U_{i}$ and $\phi_{i}$ such that $U_{i}\cap c_{X}(P)\neq \emptyset$, $\phi_{i} \in \mathcal{O}_{X}(-\Delta_{i})(U_{i})$ and $v_{P}(\phi_{i})=v_{P}^{\natural}(\Delta_{i})$. 
By shrinking $U_{0}$ and $U_{i}$, we may assume that $U_{0}=U_{i}$ for any $i$ and $U_{0}$ is affine. 
We put $U=U_{0}$. 
We define divisors $B_{0}$ and $B_{i}$ on $U$ by
$$B_{0}:=\frac{1}{m}({\rm div}(\phi_{0})-mK_{U})\quad {\rm and}\quad B_{i}:={\rm div}(\phi_{i})-\Delta_{i}|_{U}.$$
By construction of $\phi_{0}$ and $\phi_{i}$, we have $B_{0}\geq0$ and $B_{i}\geq0$. 
Set $B_{U}=B_{0}+\sum d_{i}B_{i}$. 
Then, the divisor $K_{U}+\Delta|_{U}+B_{U}$ is $\mathbb{R}$-Cartier because 
\begin{equation*}
\begin{split}
K_{U}+\Delta|_{U}+B_{U}&=(K_{U}+B_{0})+\sum d_{i}(\Delta_{i}|_{U}+B_{i})\\
&=\frac{1}{m}{\rm div}(\phi_{0})+\sum d_{i}\cdot{\rm div}(\phi_{i}).
\end{split}
\end{equation*}
Moreover, if we set $V=f^{-1}(U)$ and $f_{V}=f|_{V}$, we have
\begin{equation*}
\begin{split}
\alpha_{\langle X,\Delta \rangle}(P,U,B_{U})=&{\rm coeff}_{P|_{V}}\bigl(K_{V}-f_{V}^{*}(K_{U}+\Delta|_{U}+B_{U})\bigr)\\
=&{\rm coeff}_{P|_{V}}\bigl(K_{V}-(\frac{1}{m}f_{V}^{*}({\rm div}(\phi_{0}))+\sum d_{i}f_{V}^{*}({\rm div}(\phi_{i})))\bigr)\\
=&{\rm coeff}_{P}(K_{Y})-\frac{1}{m}v_{P}(\phi_{0})-\sum d_{i}\cdot v_{P}(\phi_{i}).
\end{split}
\end{equation*}
We recall that $v_{P}(\phi_{0})=v_{P}^{\natural}(mK_{X})$ and $v_{P}(\phi_{i})=v_{P}^{\natural}(\Delta_{i})$. 
With the above equation, we obtain 
\begin{equation*}
\begin{split}
\alpha_{\langle X,\Delta \rangle}(P,U,B_{U})=&{\rm coeff}_{P}(K_{Y})-\frac{1}{m}v_{P}(\phi_{0})-\sum d_{i}\cdot v_{P}(\phi_{i})\\
=&{\rm coeff}_{P}(K_{Y})-\frac{1}{m}v_{P}^{\natural}(mK_{X})-\sum d_{i}\cdot v_{P}^{\natural}(\Delta_{i})\\
\geq&-1.
\end{split}
\end{equation*}
By Definition \ref{defnalmostdiscrepancy}, we have $\alpha(P,X,\Delta)\geq-1$ for any prime divisor $P$ over $X$. 
In this way, we see that $\langle X,\Delta\rangle$ is pseudo-lc. 
\end{proof}

We discuss other formulations of discrepancy in Definition \ref{defnalmostdiscrepancy} (Proposition \ref{propdiscrepancy}, Theorem \ref{proppairdiscrepancy} and Theorem \ref{thmnefenvelope}). 
In Proposition \ref{propdiscrepancy} below, we give a very simple description of discrepancy with Notation \ref{note4.4} (5).

\begin{prop}\label{propdiscrepancy}
Let $\langle X,\Delta \rangle$ be a pair such that $\Delta$ is a $\mathbb{Q}$-divisor, and let $P$ be a prime divisor over $X$. 
Let $f\colon Y\to X$ be a projective birational morphism from a normal variety $Y$ such that $P$ is a divisor on $Y$. 
Fix $K_{X}$ and $K_{Y}$ such that $f_{*}K_{Y}=K_{X}$. 
Then,
$$\alpha(P,X,\Delta)={\rm coeff}_{P}\bigl(K_{Y}-f^{*}(K_{X}+\Delta)\bigr)={\rm coeff}_{P}(K_{Y})-v_{P}(K_{X}+\Delta).$$
\end{prop}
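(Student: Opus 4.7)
The second equality is immediate from Notation \ref{note4.4}(5), which gives ${\rm coeff}_P(f^*(K_X+\Delta)) = v_P(K_X+\Delta)$. For the first equality, I would prove the two opposite inequalities between $\alpha(P,X,\Delta)$ and ${\rm coeff}_P(K_Y)-v_P(K_X+\Delta)$.

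For the upper bound, I would show that for any admissible pair $(U,B_U)$ in the definition of $\alpha$, one has ${\rm coeff}_{P|_V}(f_V^*D)\geq v_P(K_X+\Delta)$, where $D := K_U+\Delta|_U+B_U$; taking the supremum then gives $\alpha(P,X,\Delta)\leq {\rm coeff}_P(K_Y)-v_P(K_X+\Delta)$. First in the $\mathbb{Q}$-Cartier case: after shrinking $U$ to a small affine around the generic point of $c_X(P)$, write $mD={\rm div}(\phi)$; since $mB_U\geq 0$ we have ${\rm div}(\phi)\geq m(K_U+\Delta|_U)$, so $\phi\in\mathcal{O}_U(-mK_X-m\Delta)$, hence $v_P^\natural(m(K_X+\Delta))\leq v_P(\phi)$, and combined with the bound $v_P(K_X+\Delta)\leq v_P^\natural(m(K_X+\Delta))/m$ from Notation \ref{note4.4}(3), this gives $v_P(K_X+\Delta)\leq v_P(\phi)/m={\rm coeff}_{P|_V}(f_V^*D)$. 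For the general $\mathbb{R}$-Cartier case, shrink $U$ so that $D=\sum r_iL_i$ with $L_i$ effective Cartier and $r_i\in\mathbb{R}$, pick $r_i^{(n)}\in\mathbb{Q}$ with $r_i-\tfrac{1}{n}\leq r_i^{(n)}\leq r_i$, and apply the $\mathbb{Q}$-Cartier case to $D^{(n)}:=\sum(r_i^{(n)}+\tfrac{1}{n})L_i$, which is $\mathbb{Q}$-Cartier and satisfies $D^{(n)}\geq D\geq K_U+\Delta|_U$; passing to the limit $n\to\infty$ yields the inequality for $D$.

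For the lower bound, given $\epsilon>0$, choose $k$ sufficiently divisible so that $v_P^\natural(k(K_X+\Delta))/k<v_P(K_X+\Delta)+\epsilon$ (possible by the characterization $v_P(K_X+\Delta)=\lim_{n\to\infty}v_P^\natural(n!(K_X+\Delta))/n!$), and pick $\psi\in\mathcal{O}_X(-k(K_X+\Delta))(U)$ on some affine $U$ meeting $c_X(P)$ with $v_P(\psi)=v_P^\natural(k(K_X+\Delta))$, realizing the minimum in Notation \ref{note4.4}(1). Setting $B_U:=\tfrac{1}{k}{\rm div}(\psi|_U)-K_U-\Delta|_U\geq 0$, the pair $(U,B_U)$ is admissible since $K_U+\Delta|_U+B_U=\tfrac{1}{k}{\rm div}(\psi|_U)$ is principal, hence $\mathbb{Q}$-Cartier; a direct computation yields $\alpha_{\langle X,\Delta\rangle}(P,U,B_U)={\rm coeff}_P(K_Y)-v_P(\psi)/k$, which exceeds ${\rm coeff}_P(K_Y)-v_P(K_X+\Delta)-\epsilon$, and letting $\epsilon\to 0$ finishes. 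The main obstacle is the $\mathbb{R}$-Cartier case in the upper bound, since the supremum defining $\alpha$ ranges over $\mathbb{R}$-divisor perturbations while $v_P$ and $v_P^\natural$ are intrinsically rational invariants; the perturbation trick $D^{(n)}=\sum(r_i^{(n)}+\tfrac{1}{n})L_i$ is what bridges the gap, producing a $\mathbb{Q}$-Cartier divisor dominating $K_X+\Delta$ and converging to $D$.
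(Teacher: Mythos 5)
Your proof is correct and follows essentially the same route as the paper's: both directions reduce to the $\mathbb{Q}$-Cartier computation with principal multiples, the invariant $v_P^{\natural}$, and the limit formula $v_P(K_X+\Delta)=\lim_{k}v_P^{\natural}(k!(K_X+\Delta))/k!$. The only (harmless) divergence is the $\mathbb{R}$-Cartier reduction in the upper bound: the paper writes $C_{U'}=\sum_j r_jC_j$ as a convex combination of effective $\mathbb{Q}$-divisors with $K_{U'}+\Delta|_{U'}+C_j$ $\mathbb{Q}$-Cartier and uses linearity of $\alpha_{\langle X,\Delta\rangle}(P,U',\,\cdot\,)$ to bound it by one of the rational terms, whereas you approximate $D$ monotonically from above by $\mathbb{Q}$-Cartier divisors $D^{(n)}\geq D$ supported on a Cartier decomposition --- both devices are sound, yours exploiting monotonicity of $v_P^{\natural}$ and the paper's exploiting convexity.
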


\begin{proof}
The second equality is obvious from the definition of $f^{*}(K_{X}+\Delta)$. 
We prove the equality $\alpha(P,X,\Delta)={\rm coeff}_{P}(K_{Y})-v_{P}(K_{X}+\Delta).$
Pick any $m$ such that $m=k!$ for some integer $k>0$ and $m\Delta$ is a Weil divisor. 
By Notation \ref{note4.4} (1),  we can find an open subset $U \subset X$ and a rational function $\phi\in \mathcal{O}_{X}(-m(K_{X}+\Delta))(U)$ such that $U\cap c_{X}(P)\neq \emptyset$ and $v_{P}(\phi)=v_{P}^{\natural}(m(K_{X}+\Delta))$. 
By shrinking $U$, we may assume that $U$ is affine. 
We set $B_{U}=\frac{1}{m}({\rm div}(\phi)-m(K_{U}+\Delta|_{U}))$. 
Then $B_{U}\geq0$ and the divisor $K_{U}+\Delta|_{U}+B_{U}$ is $\mathbb{Q}$-Cartier. 
If we put $V=f^{-1}(U)$ and $f_{V}=f|_{V}$, then 
\begin{equation*}
\begin{split}
\alpha(P,X,\Delta)\geq\alpha_{\langle X,\Delta \rangle}(P,U,B_{U})=&{\rm coeff}_{P|_{V}}\bigl(K_{V}-f_{V}^{*}(K_{U}+\Delta|_{U}+B_{U})\bigr)\\
=&{\rm coeff}_{P}(K_{Y})-\frac{1}{m}v_{P}(\phi)\\
=&{\rm coeff}_{P}(K_{Y})-\frac{1}{m}v_{P}^{\natural}(m(K_{X}+\Delta)). 
\end{split}
\end{equation*}
By Notation \ref{note4.4} (3), we have $v_{P}(K_{X}+\Delta)={\rm lim}_{k\to \infty}\frac{v_{P}^{\natural}(k!(K_{X}+\Delta))}{k!}$. 
Therefore, considering the limit $k\to \infty$, we obtain $\alpha(P,X,\Delta)\geq{\rm coeff}_{P}(K_{Y})-v_{P}(K_{X}+\Delta)$. 

On the other hand, pick an affine open subset $U'\subset X$ and an $\mathbb{R}$-divisor $C_{U'}\geq 0$ on $U'$ such that $U'\cap c_{X}(P)\neq \emptyset$ and $K_{U'}+\Delta|_{U'}+C_{U'}$ is $\mathbb{R}$-Cartier. 
Then, there are positive real numbers $r_{1},\cdots,r_{n}$ and effective $\mathbb{Q}$-divisors $C_{1},\cdots,C_{n}$ on $U'$ such that $\sum_{j=1}^{n}r_{j}=1$, $\sum_{j=1}^{n}r_{j}C_{j}=C_{U'}$ and $K_{U'}+\Delta|_{U'}+C_{j}$ are $\mathbb{Q}$-Cartier. 
By definition of $\alpha_{\langle X,\Delta \rangle}(P,U',C_{U'})$, we have 
$\alpha_{\langle X,\Delta \rangle}(P,U',C_{U'})=\sum_{j=1}^{n}r_{j}\cdot\alpha_{\langle X,\Delta \rangle}(P,U',C_{j}).$
Therefore, we have  $\alpha_{\langle X,\Delta \rangle}(P,U',C_{U'})\leq \alpha_{\langle X,\Delta \rangle}(P,U',C_{j'})$ for some index $j'$. 
Pick a sufficiently large and divisible integer $m>0$ such that $m\Delta$ and $mC_{j'}$ are both Weil divisors and $m(K_{U'}+\Delta|_{U'}+C_{j'})$ is Cartier. 
By shrinking $U'$, we may write $m(K_{U'}+\Delta|_{U'}+C_{j'})={\rm div}(\sigma)$ with a rational function $\sigma$. 
Since $C_{j'}\geq 0$, we have $\sigma \in \mathcal{O}_{X}(-m(K_{X}+\Delta))(U')$, and therefore we obtain $v_{P}(\sigma)\geq v_{P}^{\natural}(m(K_{X}+\Delta))$ by Notation \ref{note4.4} (1). 
With Notation \ref{note4.4} (3), we have $$\frac{1}{m}v_{P}(\sigma)\geq\frac{1}{m}v_{P}^{\natural}(m(K_{X}+\Delta))\geq v_{P}(K_{X}+\Delta).$$ 
We put $V'=f^{-1}(U')$ and $f_{V'}=f|_{V'}$. 
From the above facts, for any $U'$ and $C_{U'}$, we have 
\begin{equation*}
\begin{split}
\alpha_{\langle X,\Delta \rangle}(P,U',C_{U'})\leq &\alpha_{\langle X,\Delta \rangle}(P,U',C_{j'})\\=&{\rm coeff}_{P|_{V'}}\bigl(K_{V'}-f_{V'}^{*}(K_{U'}+\Delta|_{U'}+C_{j'})\bigr)\\
=&{\rm coeff}_{P}(K_{Y})-\frac{1}{m}v_{P}(\sigma)\\
\leq&{\rm coeff}_{P}(K_{Y})-v_{P}(K_{X}+\Delta).
\end{split}
\end{equation*}
By taking the supremum, we have $\alpha(P,X,\Delta)\leq {\rm coeff}_{P}(K_{Y})-v_{P}(K_{X}+\Delta)$. 
So we obtain the desired equality. 
\end{proof}

Next, we prove that for any pair $\langle X,\Delta \rangle$ such that $X$ is quasi-projective, the discrepancy $\alpha(\,\cdot\,,X,\Delta)$ of $\langle X,\Delta \rangle$ can be approximated by the usual discrepancy of pairs $(X,\Delta+G)$ with $G\geq 0$. 

\begin{thm}\label{proppairdiscrepancy}
Let $\langle X, \Delta \rangle$ be a pair such that $X$ is quasi-projective. 
Then, for any projective birational morphism $f\colon Y\to X$ from a normal quasi-projective variety $Y$ and any real number $\epsilon>0$, there is an effective $\mathbb{R}$-divisor $G$ on $X$ such that 
\begin{itemize} \item $\Delta$ and $G$ have no common components, and 
\item $K_{X}+\Delta+G$ is $\mathbb{R}$-Cartier and $\alpha(P,X,\Delta)-a(P,X,\Delta+G)\leq\epsilon$ for any prime divisor $P$ on $Y$, where $a(P,X,\Delta+G)$ is the usual discrepancy. \end{itemize}
In particular, for any prime divisor $P$ over $X$, we have 
$$\alpha(P,X,\Delta)={\rm sup}\{a(P,X,\Delta+G)\,|\,G\geq0{\rm \; such\;that\;}K_{X}+\Delta+G {\rm \;is \;}\mathbb{R}{\rm \mathchar`-Cartier}\}.$$
\end{thm}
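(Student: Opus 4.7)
The easy half $\alpha(P,X,\Delta)\geq a(P,X,\Delta+G)$ will follow immediately from Definition \ref{defnalmostdiscrepancy}: for any admissible $G$ and any affine open $U\subset X$ meeting $c_X(P)$, $a(P,X,\Delta+G)=\alpha_{\langle X,\Delta\rangle}(P,U,G|_U)$ is one of the quantities appearing in the supremum defining $\alpha(P,X,\Delta)$. The real work is producing $G$ that realizes the supremum uniformly on $Y$. To reduce the problem, note that $f$ has only finitely many exceptional prime divisors $E_1,\ldots,E_n$; every non-exceptional prime divisor $P$ on $Y$ is the strict transform of some prime $P'$ on $X$, and since $v_P=v_{P'}$ as valuations one has $\alpha(P,X,\Delta)=-{\rm coeff}_{P'}(\Delta)$ by Lemma \ref{lembasic}(ii), so the inequality at such $P$ collapses to ${\rm coeff}_{P'}(G)\leq\epsilon$. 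It therefore suffices to produce $G\geq 0$ with $\Supp G\cap\Supp\Delta=\emptyset$, every coefficient of $G$ at most $\epsilon$, $K_X+\Delta+G$ being $\mathbb{R}$-Cartier, and $\alpha(E_i,X,\Delta)-a(E_i,X,\Delta+G)\leq\epsilon$ for every $i$.

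Assume first that $\Delta$ is a $\mathbb{Q}$-divisor. The plan is to use Proposition \ref{propdiscrepancy}, which gives $\alpha(E_i,X,\Delta)={\rm coeff}_{E_i}(K_Y)-v_{E_i}(K_X+\Delta)$, together with the convergence $v_{E_i}(K_X+\Delta)=\lim_{k\to\infty}v_{E_i}^{\natural}(k!(K_X+\Delta))/k!$ from Notation \ref{note4.4}(3). I will first choose a single highly divisible $m$ with $m\Delta$ integral, $1/m<\epsilon$, and $v_{E_i}^{\natural}(m(K_X+\Delta))/m<v_{E_i}(K_X+\Delta)+\epsilon/2$ for every $i$, then fix a very ample Cartier divisor $A$ on $X$ such that $\Supp A$ avoids every $c_X(E_i)$ and shares no component with $\Delta$ (so $v_{E_i}(A)=0$). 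For $N\gg 0$ the reflexive sheaf $\mathcal{O}_X(NA-mK_X-m\Delta)$ is globally generated, so for each $i$ I can pick a global section $\tilde\phi_i$ attaining $v_{E_i}(\tilde\phi_i)=v_{E_i}^{\natural}(m(K_X+\Delta)-NA)=v_{E_i}^{\natural}(m(K_X+\Delta))$. With an additional generic section $\tilde\phi_0$, the generic linear combination
\[
\phi\;=\;c_0\tilde\phi_0+\sum_{i=1}^{n}c_i\tilde\phi_i
\]
will simultaneously attain $v_{E_i}(\phi)=v_{E_i}^{\natural}(m(K_X+\Delta))$ at every $E_i$, and a Bertini-type argument on the linear subsystem spanned by these sections will show that the zero divisor $Z(\phi)=(\phi)+NA-mK_X-m\Delta$ is reduced and has no components in $\Supp\Delta$. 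Setting $G:=\tfrac{1}{m}Z(\phi)$ gives $K_X+\Delta+G=\tfrac{1}{m}((\phi)+NA)$ which is $\mathbb{Q}$-Cartier, every coefficient of $G$ equal to $0$ or $1/m<\epsilon$, $\Supp G\cap\Supp\Delta=\emptyset$, and a direct computation yields
\[
a(E_i,X,\Delta+G)={\rm coeff}_{E_i}(K_Y)-\tfrac{1}{m}v_{E_i}^{\natural}(m(K_X+\Delta))>\alpha(E_i,X,\Delta)-\epsilon/2.
\]

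For a general $\mathbb{R}$-divisor $\Delta$ the formula of Proposition \ref{propdiscrepancy} is not directly available, and I will instead work with the local characterization of $\alpha(E_i,X,\Delta)$: on a common affine open $U\subset X$ meeting every $c_X(E_j)$ (for instance the complement of a general hyperplane section), choose for each $i$ an $\mathbb{R}$-divisor $B_{U,i}\geq 0$ with $K_U+\Delta|_U+B_{U,i}$ $\mathbb{R}$-Cartier and $\alpha_{\langle X,\Delta\rangle}(E_i,U,B_{U,i})>\alpha(E_i,X,\Delta)-\epsilon/2$, write each $K_U+\Delta|_U+B_{U,i}$ as an $\mathbb{R}$-linear combination of principal divisors on $X$, and extend to an $\mathbb{R}$-Cartier divisor $K_X+\Delta+\tilde G_i$ on $X$ by adding a large multiple of a general ample Cartier divisor supported on $X\setminus U$ and disjoint from $\Supp\Delta$; the analog of the $\mathbb{Q}$-case linear-combination step then combines the $\tilde G_i$'s into one $G$ with the required properties. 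The hard part throughout is exactly this local-to-global step: the supremum defining $\alpha(E_i,X,\Delta)$ lives only near $c_X(E_i)$, so producing a single global $G$ that realizes all the near-suprema at once, while also keeping every coefficient of $G$ below $\epsilon$ and $\Supp G$ disjoint from $\Supp\Delta$, requires balancing the global generation of $\mathcal{O}_X(NA-mK_X-m\Delta)$, a generic linear combination of sections to attain the several local minima simultaneously, and Bertini-type genericity for the support and multiplicity control of $Z(\phi)$.
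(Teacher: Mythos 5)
Your reduction to the finitely many $f$-exceptional divisors and your $\mathbb{Q}$-divisor case are essentially the paper's own argument (Step 1 of its proof): both rest on Proposition \ref{propdiscrepancy}, the convergence in Notation \ref{note4.4} (3), global generation of a twist of $\mathcal{O}_{X}(-m(K_{X}+\Delta))$ by a large ample divisor, and a Bertini/general-member argument making $\tfrac{1}{m}Z(\phi)$ reduced, with coefficients at most $1/m<\epsilon$ and no component in $\Supp\Delta$. Your ``generic linear combination of sections attaining the several local minima'' is the same mechanism as the paper's computation that ${\rm Fix}(f^{*}|A-D|)=f^{\natural}D$; that half is fine.

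The $\mathbb{R}$-divisor case is where the proposal has a genuine gap, in two places. First, the extension step: from $B_{U,i}\geq 0$ on an affine open $U$ with $K_{U}+\Delta|_{U}+B_{U,i}$ $\mathbb{R}$-Cartier you cannot in general produce a global \emph{effective} $\tilde G_{i}$ with $K_{X}+\Delta+\tilde G_{i}$ $\mathbb{R}$-Cartier. Writing the local divisor as a combination of principal divisors (after shrinking $U$) yields a global $\mathbb{R}$-Cartier $K_{X}+\Delta+\tilde G_{i}$ whose $\tilde G_{i}$ may be negative along $X\setminus U$, and your proposed repair fails: adding a Cartier divisor supported on $X\setminus U$ does not change $\mathbb{R}$-Cartierness and, more to the point, there need not exist an effective ample (or even $\mathbb{R}$-Cartier) divisor supported on $X\setminus U$ to absorb the negative part. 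Indeed, the assertion that every near-optimal local datum $(U,B_{U})$ is realized by a global $G\geq 0$ is essentially the ``in particular'' statement of the theorem itself, which in the $\mathbb{Q}$-case is exactly what the $v^{\natural}$/global-generation machinery proves; for $\mathbb{R}$-divisors that machinery is unavailable. Second, even granting the $\tilde G_{i}$, there is no ``analog of the $\mathbb{Q}$-case linear-combination step'': for functions one has $v_{E_{i}}(\sum_{j}c_{j}\tilde\phi_{j})=\min_{j}v_{E_{i}}(\tilde\phi_{j})$ for generic $c_{j}$, but discrepancies are affine in the boundary, so $a(E_{i},X,\Delta+\sum_{j}\lambda_{j}\tilde G_{j})=\sum_{j}\lambda_{j}\,a(E_{i},X,\Delta+\tilde G_{j})$, and the terms with $j\neq i$, over which you have no control, can drag this average far below $\alpha(E_{i},X,\Delta)$; each $\tilde G_{i}$ is good only at its own $E_{i}$. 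The paper's Step 2 avoids both problems by never globalizing the local data directly: it uses the rational-polytope structure of the $\mathbb{R}$-Cartier condition on $\mathcal{E}\times\prod_{i}\mathcal{B}_{i}$ to write $\Delta=\sum_{l}r_{l}\Delta^{(l)}$ with $\mathbb{Q}$-divisors $\Delta^{(l)}$ satisfying $\alpha(E_{i},X,\Delta^{(l)})\geq\alpha(E_{i},X,\Delta)-\tfrac{2}{3}\epsilon$ for \emph{every} $i$, applies the already-proved $\mathbb{Q}$-case to each $\Delta^{(l)}$ to obtain a $G^{(l)}$ that is uniformly good at all $E_{i}$, and only then sets $G=\sum_{l}r_{l}G^{(l)}$; there the convex combination is harmless because every summand is already within $\epsilon$ of $\alpha(E_{i},X,\Delta)$ at every $E_{i}$.
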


\begin{proof}
The second assertion immediately follows from the first assertion. 
So we only prove the first assertion. 
Pick $f\colon Y\to X$ and $\epsilon>0$ as in Theorem \ref{proppairdiscrepancy}. 
By replacing $Y$ by a higher smooth model, we may assume that $Y$ is smooth. 
Fix Weil divisors $K_{X}$ and $K_{Y}$ such that $f_{*}K_{Y}=K_{X}$. 
We prove Theorem \ref{proppairdiscrepancy} in two steps. 
\begin{step3}\label{step1discre}
First we prove Theorem \ref{proppairdiscrepancy} when $\Delta$ is a $\mathbb{Q}$-divisor. 
We borrow the idea of \cite[Proof of Theorem 5.4]{dfh}. 

Let $\{E_{i}\}_{i}$ be the set of all $f$-exceptional prime divisors on $Y$. 
Since the set $\{E_{i}\}_{i}$ is a finite set, by Notation \ref{note4.4} (3), there is a sufficiently large and divisible integer $m>0$ such that $m\Delta$ is a Weil divisor and $\frac{1}{m}v_{E_{i}}^{\natural}(m(K_{X}+\Delta))\leq v_{E_{i}}(K_{X}+\Delta)+\epsilon$ for all $E_{i}$. 
We pick $m>0$ such that $\frac{1}{m}\leq \epsilon$ and $m$ satisfies the above condition. 
By Proposition \ref{propdiscrepancy}, we have
\begin{equation*}
\begin{split}
\alpha(E_{i},X,\Delta)=&{\rm coeff}(K_{Y})-v_{E_{i}}(K_{X}+\Delta)\\
\leq& {\rm coeff}(K_{Y})-\frac{1}{m}v_{E_{i}}^{\natural}(m(K_{X}+\Delta))+\epsilon.
\end{split}
\end{equation*}
Pick a Weil divisor $D\geq0$ on $X$ such that $m(K_{X}+\Delta)-D$ is Cartier, and take an ample Cartier divisor $A$ such that the sheaf $\mathcal{O}_{X}(A-D)$ is globally generated. 
We can find such $D$ and $A$ since $X$ is quasi-projective. 
By construction of $\mathcal{O}_{X}(A-D)$, we have
$${\rm min}\{v_{P}(\psi)|\,\psi \in H^{0}(X,\mathcal{O}_{X}(A-D))\}=v_{P}^{\natural}(D-A)$$
for any prime divisor $P$ on $Y$, where $v_{P}^{\natural}(\,\cdot\,)$ is as in Notation \ref{note4.4} (1).   
We define a linear system
$$|A-D|=\{A'\in |A|\;|\,A'-D\geq0\}=\{{\rm div}(\psi)+A\,|\,\psi \in H^{0}(X,\mathcal{O}_{X}(A-D))\}$$
and consider its pullback $f^{*}|A-D|:=\{f^{*}A'\,|\,A'\in |A-D|\}$. 
Then, the fixed part ${\rm Fix}(f^{*}|A-D|)$ is 
\begin{equation*}
\begin{split}
{\rm Fix}(f^{*}|A-D|)=&\sum_{P}
\bigl({\rm min}\{{\rm coeff}_{P}(f^{*}A')\,|\,A'\in |A-D|\,\}\bigr)P\\
=&\sum_{P}
\left({\rm min}\{v_{P}(\psi)|\psi \in H^{0}(X,\mathcal{O}_{X}(A-D))\}+{\rm coeff}_{P}(f^{*}A)\right)P\\
=&f^{*}A+\sum_{P} v_{P}^{\natural}(D-A)\cdot P=f^{*}A+f^{\natural}(D-A)\\
=&f^{\natural}D, 
\end{split}
\end{equation*}
where the final equality follows from \cite[Lemma 2.4]{dfh}. 
Therefore, we can find a movable Cartier divisor $M$ such that $M+f^{\natural}D\sim f^{*}A$. 
Then, $f_{*}M+D$ is Cartier. 
Thus, the divisor $m(K_{X}+\Delta)+f_{*}M=m(K_{X}+\Delta)-D+(D+f_{*}M)$ is Cartier and we have $M+f^{\natural}D=f^{*}(f_{*}M+D)$.
We pick $M\geq0$ so that $M$ is reduced and it contains no $f$-exceptional divisors or components of $f_{*}^{-1}\Delta$ in its support. 
Then 
\begin{equation*}
\begin{split}
&K_{Y}-\frac{1}{m}M-\frac{1}{m}f^{\natural}(m(K_{X}+\Delta))\\
=&K_{Y}-\frac{1}{m}M-\frac{1}{m}f^{\natural}(m(K_{X}+\Delta)-D+D)\\
=&K_{Y}-\frac{1}{m}M-\frac{1}{m}f^{\natural}D-\frac{1}{m}f^{*}(m(K_{X}+\Delta)-D)\\
=&K_{Y}-\frac{1}{m}f^{*}(f_{*}M+D)-\frac{1}{m}f^{*}(m(K_{X}+\Delta)-D)\\
=&K_{Y}-f^{*}(K_{X}+\Delta+\frac{1}{m}f_{*}M),
\end{split}
\end{equation*}
where the second equality follows from \cite[Lemma 2.4]{dfh} and that the divisor $m(K_{X}+\Delta)-D$ is Cartier. 
We recall that $m$ satisfies $\frac{1}{m}\leq \epsilon$, and also recall that we have $\alpha(E_{i},X,\Delta)\leq {\rm coeff}_{E_{i}}(K_{Y})-\frac{1}{m}v_{E_{i}}^{\natural}(m(K_{X}+\Delta))+\epsilon$ for any $f$-exceptional prime divisor $E_{i}$ on $Y$. 
Pick any prime divisor $P$ on $Y$. 
Since $M$ contains no $f$-exceptional divisors, if $P$ is $f$-exceptional,  we have 
\begin{equation*}
\begin{split}
a(P,X,\Delta+\frac{1}{m}f_{*}M)=&{\rm coeff}_{P}\bigl(K_{Y}-\frac{1}{m}M-\frac{1}{m}f^{\natural}(m(K_{X}+\Delta))\bigr)\\
=&{\rm coeff}_{P}(K_{Y})-\frac{1}{m}v_{P}^{\natural}(m(K_{X}+\Delta))\\
\geq&\alpha(P,X,\Delta)-\epsilon.
\end{split}
\end{equation*}
If $P$ is a divisor on $X$, we have 
\begin{equation*}
\begin{split}
\alpha(P,X,\Delta)-a(P,X,\Delta+\frac{1}{m}f_{*}M)=&\frac{1}{m}\cdot{\rm coeff}_{P}(f_{*}M)
\leq\frac{1}{m}\leq \epsilon, 
\end{split}
\end{equation*}
where the first equality follows from Lemma \ref{lembasic} (ii) and the second inequality follows from that $M$ is reduced. 
So $\frac{1}{m}f_{*}M$ satisfies the conditions of Theorem \ref{proppairdiscrepancy}. 
\end{step3}

\begin{step3}\label{step2discre}
From now on, we prove Theorem \ref{proppairdiscrepancy} when $\Delta$ is an $\mathbb{R}$-divisor. 

Let $\{E_{i}\}_{i}$ be the set of all $f$-exceptional prime divisors on $Y$. 
By Definition \ref{defnalmostdiscrepancy}, there are affine open subsets $U_{i}\subset X$ with $c_{X}(E_{i})\cap U_{i}\neq \emptyset$ and $\mathbb{R}$-divisors $B_{i}\geq0$ on $U_{i}$ such that $K_{U_{i}}+\Delta|_{U_{i}}+B_{i}$ are $\mathbb{R}$-Cartier and $\alpha(E_{i},X,\Delta)-\frac{\epsilon}{3}\leq \alpha_{\langle X,\Delta \rangle}(E_{i},U_{i},B_{i})$ for all $i$. 
Let $\mathcal{E}\subset{\rm WDiv}_{\mathbb{R}}(X)$ be the set of effective $\mathbb{R}$-divisors on $X$ whose support is contained in ${\rm Supp}\Delta$. 
For any $i$, let $\mathcal{B}_{i}\subset{\rm WDiv}_{\mathbb{R}}(U_{i})$ be the set of effective $\mathbb{R}$-divisors on $U_{i}$ whose support is contained in ${\rm Supp}B_{i}$. 
We identify $\mathcal{E}$ (resp.~$\mathcal{B}_{i}$) with a subset of the $\mathbb{R}$-vector space whose basis is given by all components of $\Delta$ (resp.~the $\mathbb{R}$-vector space whose basis is given by all components of $B_{i}$). 
Consider the set 
$$\Bigl\{\bigl(\Delta',(B'_{i})_{i}\bigr)\in \mathcal{E}\times \underset{i}{\prod}\mathcal{B}_{i}\Bigm|K_{U_{i}}+\Delta'|_{U_{i}}+B'_{i} {\rm \;\,is \;\, }\mathbb{R}{\rm \mathchar`-Cartier\;\, for\;\, any\;\,}i \Bigr\}$$
which contains $\bigl(\Delta,(B_{i})_{i}\bigr)$. 
By an argument of convex geometry, we see that the set contains a rational polytope in $\mathcal{E}\times \prod_{i}\mathcal{B}_{i}$ containing 
$\bigl(\Delta,(B_{i})_{i}\bigr)$. 
Therefore, we can find positive real numbers $r_{1},\cdots, r_{n}$, effective $\mathbb{Q}$-divisors $\Delta^{(1)}, \cdots ,\Delta^{(n)}$ on $X$ and effective $\mathbb{Q}$-divisors $B_{i}^{(1)}, \cdots ,B_{i}^{(n)}$ on $U_{i}$ such that $\sum_{l=1}^{n}r_{l}=1$, $\sum_{l=1}^{n}r_{l}\Delta^{(l)}=\Delta$, $\sum_{l=1}^{n}r_{l}B_{i}^{(l)}=B_{i}$ and $K_{U_{i}}+\Delta^{(l)}|_{U_{i}}+B_{i}^{(l)}$ is $\mathbb{Q}$-Cartier for any $i$. 
By choosing those $\mathbb{Q}$-divisors sufficiently close to $\Delta$ and $B_{i}$, we may assume that the inequality $ \alpha_{\langle X,\Delta \rangle}(E_{i},U_{i},B_{i})-\frac{\epsilon}{3}\leq \alpha_{\langle X,\Delta^{(l)} \rangle}(E_{i},U_{i},B_{i}^{(l)})$ holds for any $i$ and $l$. 
Then
\begin{equation*} \begin{split} 
\alpha(E_{i},X,\Delta^{(l)}) \geq&\alpha_{\langle X,\Delta^{(l)} \rangle}(E_{i},U_{i},B_{i}^{(l)})
\geq
 \alpha_{\langle X,\Delta \rangle}(E_{i},U_{i},B_{i})-\frac{\epsilon}{3}\\
\geq&\alpha(E_{i},X,\Delta)-\frac{2}{3}\epsilon,
\end{split} \end{equation*}
where the first inequality follows from Definition \ref{defnalmostdiscrepancy}. 
Furthermore, we can assume that ${\rm Supp}\Delta={\rm Supp}\Delta^{(l)}$ and all coefficients of $\Delta-\Delta^{(l)}$ belong to $[-\frac{2}{3}\epsilon, \frac{2}{3}\epsilon]$ for any $1\leq l \leq n$. 
Then, by Lemma \ref{lembasic} (ii) and the above inequality, we obtain 
$$\alpha(P,X,\Delta)-\alpha(P,X,\Delta^{(l)})\leq \frac{2}{3}\epsilon$$
for any prime divisor $P$ on $Y$. 
By the case of $\mathbb{Q}$-divisors of Theorem \ref{proppairdiscrepancy}, we can find effective $\mathbb{R}$-divisors $G^{(1)},\cdots,G^{(n)}$ on $X$ such that 
\begin{itemize}
\item
$\Delta^{(l)}$ and $G^{(l)}$ have no common components, and 
\item
$K_{X}+\Delta^{(l)}+G^{(l)}$ is $\mathbb{R}$-Cartier and $\alpha(P,X,\Delta^{(l)})-a(P,X,\Delta^{(l)}+G^{(l)})\leq\frac{\epsilon}{3}$ for any prime divisor $P$ on $Y$
\end{itemize}
for any $1\leq l \leq n$. 
We set $G=\sum_{l=1}^{n}r_{l}G^{(l)}$. 
By construction, we have 
$$K_{X}+\Delta+G=\sum_{l=1}^{n}r_{l}(K_{X}+\Delta^{(l)}+G^{(l)}),$$
and so $K_{X}+\Delta+G$ is $\mathbb{R}$-Cartier. 
Since ${\rm Supp}\Delta={\rm Supp}\Delta^{(l)}$ for any $1\leq l \leq n$ and since $\Delta^{(l)}$ and $G^{(l)}$ have no common components, we see that $\Delta$ and $G$ have no common components. 
We pick any prime divisor $P$ on $Y$. 
By construction, we have $a(P,X,\Delta+G)=\sum_{l=1}^{n}r_{l}\cdot a(P,X,\Delta^{(l)}+G^{(l)})$. 
Recalling $\sum_{l=1}^{n}r_{l}=1$, we obtain
\begin{equation*}
\begin{split}
&\alpha(P,X,\Delta)-a(P,X,\Delta+G)\\
=&\sum_{l=1}^{n}r_{l}\bigl(\alpha(P,X,\Delta)-a(P,X,\Delta^{(l)}+G^{(l)})\bigr)\\
=&\sum_{l=1}^{n}r_{l}\bigl(\alpha(P,X,\Delta)-\alpha(P,X,\Delta^{(l)})+\alpha(P,X,\Delta^{(l)})-a(P,X,\Delta^{(l)}+G^{(l)})\bigr)\\
\leq&\sum_{l=1}^{n}r_{l}\left(\frac{2}{3}\epsilon+\frac{1}{3}\epsilon\right)=\epsilon.
\end{split}
\end{equation*}
In this way, $G$ satisfies the conditions of Theorem \ref{proppairdiscrepancy}. 
\end{step3}
So we are done. 
\end{proof}

We also see that the $b$-divisor defined with discrepancies is a logarithmic analog of the relative log canonical $b$-divisor in \cite[Definition 3.1]{bdff}.  

\begin{thm}\label{thmnefenvelope}
Let $\langle X,\Delta \rangle$ be a pair, and let $f\colon Y\to X$ be a projective birational morphism from a normal variety $Y$. 
Put $D=\sum_{P}\alpha(P,X,\Delta)$, where $P$ runs over all prime divisors on $Y$. 

Then, we have $D=K_{Y}+({\rm Env}_{X}(-(K_{X}+\Delta)))_{Y},$
where $({\rm Env}_{X}(-(K_{X}+\Delta)))_{Y}$ is the trace of the  nef envelope ${\rm Env}_{X}(-(K_{X}+\Delta))$ on $Y$ $($for the definition of nef envelope, see \cite[Definition 2.3]{bdff}$)$. 
\end{thm}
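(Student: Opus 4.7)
The plan is to verify the equality coefficient-by-coefficient. For a prime divisor $P$ on $Y$, the coefficient of $P$ in $D-K_{Y}$ is $\alpha(P,X,\Delta)-{\rm coeff}_{P}(K_{Y})$. Unfolding Definition \ref{defnalmostdiscrepancy}, this equals
\begin{equation*}
\sup_{U,B_{U}} {\rm coeff}_{P|_{V}}\bigl(f_{V}^{*}(-(K_{U}+\Delta|_{U}+B_{U}))\bigr),
\end{equation*}
where $U\subset X$ runs over affine opens with $U\cap c_{X}(P)\neq\emptyset$, $B_{U}\geq 0$ runs over $\mathbb{R}$-divisors on $U$ making $K_{U}+\Delta|_{U}+B_{U}$ be $\mathbb{R}$-Cartier, $V=f^{-1}(U)$, and $f_{V}=f|_{V}$.

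The crux is to rephrase this supremum in the language of nef envelopes. Setting $H:=-(K_{U}+\Delta|_{U}+B_{U})$ produces an $\mathbb{R}$-Cartier $\mathbb{R}$-divisor on $U$ satisfying $H\leq -(K_{X}+\Delta)|_{U}$ as Weil divisors (since $B_{U}\geq0$); conversely, any such $H$ arises from the effective divisor $B_{U}:=-H-(K_{U}+\Delta|_{U})\geq 0$. Hence the expression above is the pointwise supremum, along $P$, of $f_{V}^{*}H$ as $H$ ranges over $\mathbb{R}$-Cartier $\mathbb{R}$-divisors on affine opens of $X$ dominated by $-(K_{X}+\Delta)$ in the Weil-divisor sense. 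After this reinterpretation, one checks that this is exactly how \cite[Definition 2.3]{bdff} prescribes the trace $({\rm Env}_{X}(-(K_{X}+\Delta)))_{Y}$, yielding the desired identity $D-K_{Y}=({\rm Env}_{X}(-(K_{X}+\Delta)))_{Y}$.

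The main obstacle is matching the local setup of Definition \ref{defnalmostdiscrepancy} (supremum over affine opens $U\subset X$ and $\mathbb{R}$-divisors $B_{U}$ on $U$) with the framework of \cite{bdff}, where nef envelopes are defined on an ambient variety. I would handle this either by checking that the BdFF nef envelope admits such a local description over affine opens of $X$ (the construction is evidently compatible with restriction to open subsets, by sheafification of the defining supremum), or, when $X$ is quasi-projective, by invoking Theorem \ref{proppairdiscrepancy} to replace each local pair $(U,B_{U})$ by a global effective $\mathbb{R}$-divisor $G$ on $X$ with $K_{X}+\Delta+G$ globally $\mathbb{R}$-Cartier. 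In this alternative route, the identity
\begin{equation*}
\alpha(P,X,\Delta)-{\rm coeff}_{P}(K_{Y})=\sup_{G}{\rm coeff}_{P}\bigl(f^{*}(-(K_{X}+\Delta+G))\bigr)
\end{equation*}
matches the BdFF definition directly, since each $-(K_{X}+\Delta+G)$ is a genuinely global $\mathbb{R}$-Cartier divisor bounded above by $-(K_{X}+\Delta)$.
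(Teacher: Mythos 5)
Your opening reformulation is correct: unfolding Definition \ref{defnalmostdiscrepancy} does express $\alpha(P,X,\Delta)-{\rm coeff}_{P}(K_{Y})$ as the supremum of ${\rm coeff}_{P}\bigl(f^{*}H\bigr)$ over local $\mathbb{R}$-Cartier minorants $H=-(K_{U}+\Delta|_{U}+B_{U})$ of $-(K_{X}+\Delta)$. The gap is the sentence ``one checks that this is exactly how \cite[Definition 2.3]{bdff} prescribes the trace'': that identification is not a definition-chase, it is the entire content of the theorem. The nef envelope of \cite{bdff} is built from the reflexive sheaves $\mathcal{O}_{X}(-m(K_{X}+\Delta))$ (equivalently it is the largest $X$-nef $\mathbb{R}$-Weil $b$-divisor lying below $-(K_{X}+\Delta)$), so its trace is computed from \emph{principal} minorants $-\frac{1}{m}{\rm div}(\phi)$ and a limit in $m$. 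Only one comparison with your supremum is formal: every such principal minorant is one of your $H$'s, which gives $D\geq K_{Y}+({\rm Env}_{X}(-(K_{X}+\Delta)))_{Y}$ --- the inequality the paper also treats as immediate. The reverse inequality, that an arbitrary $\mathbb{R}$-Cartier minorant does not exceed the envelope along $P$, is what must actually be proved, and your proposal supplies no argument for it. The paper does this in two nontrivial steps that your plan never touches: for $\Delta$ a $\mathbb{Q}$-divisor one decomposes $B_{U}$ into $\mathbb{Q}$-Cartier pieces, principalizes each after shrinking $U$, and lands inside the valuations $v_{P}^{\natural}$ of Notation \ref{note4.4} (this is Proposition \ref{propdiscrepancy}, combined with \cite[Remark 2.4]{bdff}); for $\Delta$ an $\mathbb{R}$-divisor one approximates $\Delta=\sum_{l}r_{l}\Delta^{(l)}$ by $\mathbb{Q}$-divisors with $\alpha(P,X,\Delta)-\alpha(P,X,\Delta^{(l)})\leq\epsilon$ and invokes the superadditivity ${\rm Env}_{X}(D+D')\geq{\rm Env}_{X}(D)+{\rm Env}_{X}(D')$ of \cite[Proposition 2.6]{bdff}. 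Whether the envelope can be recovered from Cartier data on $X$ itself is exactly the kind of subtlety behind the open questions of \cite{bdff} that this paper addresses in Remark \ref{remdiffpseudolc}, so it cannot be waved through.

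Two secondary points. Your fallback route via Theorem \ref{proppairdiscrepancy} replaces local pairs $(U,B_{U})$ by global divisors $G$, but the resulting supremum over $\mathbb{R}$-Cartier divisors $-(K_{X}+\Delta+G)$ still has to be matched with the envelope, so the same gap persists; moreover it requires $X$ quasi-projective, an assumption absent from the statement of the theorem. The localization issue you single out as the ``main obstacle'' (affine opens of $X$ versus a global ambient variety) is genuine but comparatively minor, since the envelope is defined sheaf-theoretically and restricts to open subsets; the real missing ingredients are Proposition \ref{propdiscrepancy} and the superadditivity of the nef envelope.
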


\begin{proof}
The inequality  
$D\geq K_{Y}+({\rm Env}_{X}(-(K_{X}+\Delta)))_{Y}$ follows from Definition \ref{defnalmostdiscrepancy} and \cite[Definition 2.3]{bdff} (see also \cite[Lemma 2.2]{bdff}), and the equality holds when $\Delta$ is a $\mathbb{Q}$-divisor (Proposition \ref{propdiscrepancy} and \cite[Remark 2.4]{bdff}). 
Furthermore, by the same argument as in Step \ref{step2discre} in the proof of Theorem \ref{proppairdiscrepancy}, for any $\epsilon >0$, we can find positive real numbers $r_{1},\cdots, r_{n}$ and $\mathbb{Q}$-divisors $\Delta^{(1)}, \cdots ,\Delta^{(n)}$ such that $\sum_{l=1}^{n}r_{l}=1$, $\sum_{l=1}^{n}r_{l}\Delta^{(l)}=\Delta$ and $\alpha(P,X,\Delta)-\alpha(P,X,\Delta^{(l)})\leq \epsilon$ for any $l$ and any prime divisor $P$ on $Y$. 
Then $\alpha(P,X,\Delta)-\sum_{l=1}^{n}r_{l}\alpha(P,X,\Delta^{(l)})\leq \epsilon$, and therefore 
\begin{equation*}
\begin{split}
&{\rm coeff}_{P}\bigl(K_{Y}+({\rm Env}_{X}(-(K_{X}+\Delta)))_{Y}\bigr)\\
\geq &\sum_{l=1}^{n}r_{l}\cdot{\rm coeff}_{P}\bigl(K_{Y}+({\rm Env}_{X}(-(K_{X}+\Delta^{(l)})))_{Y}\bigr)=\sum_{l=1}^{n}r_{l}\alpha(P,X,\Delta^{(l)})\\
\geq& \alpha(P,X,\Delta)-\epsilon={\rm coeff}_{P}(D)-\epsilon, 
\end{split}
\end{equation*}
where the first inequality follows from \cite[Proposition 2.6]{bdff}, and the second equality follows because $\Delta^{(l)}$ are $\mathbb{Q}$-divisors. 
Since $\epsilon$ is any positive real number, we have $D\leq K_{Y}+({\rm Env}_{X}(-(K_{X}+\Delta)))_{Y}$. 
So the equality holds.  
\end{proof}

We give two examples of pseudo-lc pairs. 
First one is pseudo-lc pairs $\langle Z,\Delta_{Z}\rangle$ which are not lc. 

\begin{exam}\label{examnotlc}
Let $(X,\Delta)$ be a projective $\mathbb{Q}$-factorial klt pair such that the Picard number $\rho(X)$ is greater than $1$ and $-(K_{X}+\Delta)$ is nef but not numerically trivial. 
We pick a very ample Cartier divisor $A$ on $X$ such that there is no real number $r$ satisfying $rA\sim_{\mathbb{R}}K_{X}+\Delta$. 
Note that we only use $\rho(X)>1$ for the existence of $A$. 
Set $Y=\mathbb{P}_{X}(\mathcal{O}_{X}\oplus \mathcal{O}_{X}(-A))$, and let $f\colon Y \to X$ be the natural morphism. 
Then 
$$K_{Y}+2S+f^{*}\Delta+f^{*}A=f^{*}(K_{X}+\Delta),$$ where $S$ is the unique section corresponding to $\mathcal{O}_{Y}(1)$. 
We note that $S$ is Cartier, $S\simeq X$ and the pair $(Y,S+f^{*}\Delta)$ is plt. 
We construct a cone $Z$ by contracting $S$. 
Let $\pi \colon Y\to Z$ be the natural morphism. 
By construction, the image $\pi(S)$ is a point. 
Moreover, we can write $S+f^{*}A\sim_{\mathbb{Q}}\pi^{*}H$ for an ample $\mathbb{Q}$-divisor $H$ on $Z$. 
We put $\Delta_{Z}=\pi_{*}f^{*}\Delta$. 

We show that $\langle Z,\Delta_{Z} \rangle$ is pseudo-lc. 
For any real number $t>0$, pick a general ample $\mathbb{R}$-divisor $A_{t}\sim_{\mathbb{R}}tA-(K_{X}+\Delta)$. 
Since we have $K_{X}+\Delta+A_{t}\sim_{\mathbb{R}}tA$, we see that $K_{Z}+\Delta_{Z}+\pi_{*}f^{*}A_{t}$ is $\mathbb{R}$-Cartier (\cite[Proposition 7.2.8]{fujino-book}). 
Then, by a simple calculation, we obtain
$$K_{Y}+f^{*}A_{t}+f^{*}\Delta+(1+t)S=\pi^{*}(K_{Z}+\Delta_{Z}+\pi_{*}f^{*}A_{t}).$$ 
Let $P$ be any prime divisor over $Z$. 
By replacing $A_{t}$ if necessary, we may assume $c_{Y}(P)\not\subset {\rm Supp}f^{*}A_{t}$. 
Then, we have $a(P,Z,\Delta_{Z}+\pi_{*}f^{*}A_{t})=a(P,Y,(1+t)S+f^{*}\Delta)$, where both hand sides are the usual discrepancies. 
By definition of $\alpha(P,Z,\Delta_{Z})$ (see Definition \ref{defnalmostdiscrepancy}), we have $\alpha(P,Z,\Delta_{Z})\geq a(P,Z,\Delta_{Z}+\pi_{*}f^{*}A_{t})$ for any $t>0$. 
Thus, we obtain $\alpha(P,Z,\Delta_{Z})\geq a(P,Y,(1+t)S+f^{*}\Delta)$ for any $t>0$. 
By the standard argument of discrepancies and since the pair $(Y,S+f^{*}\Delta)$ is plt, the function $\mathbb{R}\ni t' \mapsto a(P,Y,(1+t')S+f^{*}\Delta)$ is continuous and $a(P,Y,S+f^{*}\Delta)\geq -1$. 
Since we have $\alpha(P,Z,\Delta_{Z})\geq a(P,Y,(1+t)S+f^{*}\Delta)$ for any $t>0$, by considering the limit $t\to 0$, we obtain $\alpha(P,Z,\Delta_{Z})\geq -1$. 
Thus, we see that $\langle Z,\Delta_{Z}\rangle$ is pseudo-lc. 

We show that $\langle Z,\Delta_{Z}\rangle$ is not lc. 
It is sufficient to show that $K_{Z}+\Delta_{Z}$ is not $\mathbb{R}$-Cartier. 
Recall that there is no real number $r$ such that $rA\sim_{\mathbb{R}}K_{X}+\Delta$. 
Then, $K_{Z}+\Delta_{Z}$ is not $\mathbb{R}$-Cartier by \cite[Proposition 7.2.8]{fujino-book}. 
Thus, $\langle Z,\Delta_{Z}\rangle$ is not lc. 
\end{exam}

Next example is pseudo-lc pairs which are not log canonical in the sense of \cite{dfh}.

\begin{exam}[see also {\cite[Theorem 1.3]{yzhang}}]\label{examnotdfhlc}
Let $X$ be a normal projective variety such that $(X,0)$ is $\mathbb{Q}$-factorial klt, $-K_{X}$ is nef and there is no effective $\mathbb{Q}$-divisor $\Delta \sim_{\mathbb{Q}}-K_{X}$ such that $(X,\Delta)$ is lc. 
Such variety $X$ exists even if $X$ is a smooth surface (\cite[Example 1.1]{shokurovcompl}). 
As in Example \ref{examnotlc}, we pick a very ample divisor $A$ on $X$ and set $Y=\mathbb{P}_{X}(\mathcal{O}_{X}\oplus \mathcal{O}_{X}(-A))$. 
Note that there is no real number $r$ such that $K_{X}\sim_{\mathbb{R}}rA$ by the assumption on $K_{X}$. 
Let $f\colon Y \to X$ be the natural morphism and $\pi \colon Y\to Z$ be the contraction of the section $S$ corresponding to $\mathcal{O}_{Y}(1)$. 
We have $K_{Y}+2S+f^{*}A=f^{*}K_{X}$ and $S+f^{*}A\sim_{\mathbb{Q}} \pi^{*}H$ for an ample $H$ on $Z$. 
We also have $S \simeq X$, and $\pi(S)$ is a point. 

Since $-K_{X}$ is nef, as in the argument in the second paragraph of Example \ref{examnotlc}, we see that $\langle Z,0 \rangle$ is pseudo-lc. 
We show that $\langle Z,0\rangle$ is not log canonical in the sense of \cite{dfh}. 
If $\langle Z,0\rangle$ is log canonical in the sense of \cite{dfh}, by \cite[Proposition 7.2]{dfh}, there is a $\mathbb{Q}$-divisor $B\geq0$ on $Z$ such that $K_{Z}+B$ is $\mathbb{Q}$-Cartier and $(Z,B)$ is lc. 
Then, we can write $K_{Y}+aS+\pi_{*}^{-1}B=\pi^{*}(K_{Z}+B)$ with an $a\leq 1$, and the pair $(Y,aS+\pi_{*}^{-1}B)$ is sub-lc. 
If $a<1$, by using $S+f^{*}A\sim_{\mathbb{Q}}\pi^{*}H$, we obtain
$$K_{Y}+S+\pi_{*}^{-1}B+(1-a)f^{*}A\sim_{\mathbb{Q}}\pi^{*}(K_{Z}+B+(1-a)H).$$
By restricting to $S$, we obtain $K_{S}\sim_{\mathbb{R}}-\pi_{*}^{-1}B|_{S}-(1-a)f^{*}A|_{S}$. 
We recall $S\simeq X$. 
Since $\pi_{*}^{-1}B|_{S}\geq 0$ and $1-a>0$, we see that $-K_{X}$ is big. 
Because $-K_{X}$ is nef and $(X,0)$ is $\mathbb{Q}$-factorial klt by the hypothesis, we can find a $\mathbb{Q}$-divisor $\Delta\sim_{\mathbb{Q}}-K_{X}$ such that $(X,\Delta)$ is klt. 
But it contradicts the hypothesis of $X$. 
Thus, we see that $a=1$. 
Then $K_{Y}+S+\pi_{*}^{-1}B=\pi^{*}(K_{Z}+B)$ and the pair $(Y,S+\pi_{*}^{-1}B)$ is lc. 
By restricting to $S$, we obtain $K_{S}\sim_{\mathbb{Q}}-\pi_{*}^{-1}B|_{S}$, and if we set $\Delta_{S}=\pi_{*}^{-1}B|_{S}$, then $\Delta_{S}$ is a $\mathbb{Q}$-divisor and the pair $(S,\Delta_{S})$ is lc by adjunction. 
Since $S\simeq X$, there is an $\mathbb{Q}$-divisor $\Delta_{X}\sim_{\mathbb{Q}}-K_{X}$ such that $(X,\Delta_{X})$ is lc. 
But it contradicts the hypothesis of $X$. 
Therefore, $\langle Z,0\rangle$ is not log canonical in the sense of \cite{dfh}. 
\end{exam}

The following proposition says that pseudo-lc pairs appear in generalized lc pairs. 
For definition of generalized lc pairs, see \cite[Definition 4.1]{bz}. 

\begin{prop}\label{propgeneralizedlc}
Let $(X',\Delta'+M')$ be a generalized lc pair which comes with a data $X\to X'\to Z$ and $M$. 
Then, the pair $\langle X',\Delta' \rangle$ is pseudo-lc. 
\end{prop}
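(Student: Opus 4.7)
The plan is to verify $\alpha(P,X',\Delta')\geq -1$ for every prime divisor $P$ over $X'$ by producing, for arbitrary $\delta>0$, an affine open $U'\subset X'$ meeting $c_{X'}(P)$ and an effective divisor $G$ on $U'$, admissible in Definition \ref{defnalmostdiscrepancy}, with $\alpha_{\langle X',\Delta'\rangle}(P,U',G)\geq -1-\delta$.

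First, fix $P$ and choose a sufficiently high log resolution $\psi_Y\colon Y\to X$ with $f_Y:=\phi\circ\psi_Y\colon Y\to X'$, so that $P$ is a prime divisor on $Y$ (replacing the birational datum $X\to X'$ by a higher model carrying the pullback of $M$, which is permitted for generalized pairs). The generalized lc condition reads
\[
K_Y+\Gamma_Y+\psi_Y^{*}M=f_Y^{*}(K_{X'}+\Delta'+M'),\qquad \mathrm{coeff}_P(\Gamma_Y)\leq 1,
\]
equivalently
\[
\mathrm{coeff}_P(K_Y)-\mathrm{coeff}_P\bigl(f_Y^{*}(K_{X'}+\Delta'+M')\bigr)+\mathrm{coeff}_P(\psi_Y^{*}M)\;\geq\;-1. \quad (\star)
\]

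Next, fix $\delta,\epsilon>0$, an affine open $W\subset Z$ meeting the image of $c_{X'}(P)$, and a Cartier divisor $A$ on $X'$ that is ample over $Z$ (such $A$ exists since $X'\to Z$ is projective). Then $L_\epsilon:=M+\epsilon\phi^{*}A$ on $X$ is nef over $Z$ (sum of two nef divisors) and big over $Z$ (since $\phi^{*}A$ is big). Because $L_\epsilon$ is nef, its asymptotic order of vanishing along $P$ is zero, so over the affine $W$ there exists an effective $\mathbb{R}$-divisor $N\sim_{\mathbb{R}}L_\epsilon|_{X_W}$ on $X_W:=(\pi\circ\phi)^{-1}(W)$ with $v_P(N)<\delta$. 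Set $G:=(\phi_{*}N)|_{U'}\geq 0$ on an affine open $U'\subset(\pi')^{-1}(W)$ meeting $c_{X'}(P)$. Since $\phi_{*}$ sends principal divisors to principal divisors, $G-M'|_{U'}-\epsilon A|_{U'}$ is principal, and therefore $K_{U'}+\Delta'|_{U'}+G\sim_{\mathbb{R}}K_{U'}+\Delta'|_{U'}+M'|_{U'}+\epsilon A|_{U'}$ is $\mathbb{R}$-Cartier.

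The key computation is to pull this $\mathbb{R}$-linear equivalence back to $V_Y=f_Y^{-1}(U')$ and extract the $P$-coefficient. Using $v_P(\psi_Y^{*}M)=\mathrm{coeff}_P(\psi_Y^{*}M)$, $v_P(\phi^{*}A)=\mathrm{coeff}_P(f_Y^{*}A)$, and that the $\epsilon\phi^{*}A$ contributions coming from the direct pullback and from the principal-divisor correction cancel exactly, one arrives at
\[
\alpha_{\langle X',\Delta'\rangle}(P,U',G)=\mathrm{coeff}_P(K_Y)-\mathrm{coeff}_P\bigl(f_Y^{*}(K_{X'}+\Delta'+M')\bigr)+\mathrm{coeff}_P(\psi_Y^{*}M)-v_P(N),
\]
which equals $-\mathrm{coeff}_P(\Gamma_Y)-v_P(N)\geq -1-\delta$ by $(\star)$ and $v_P(N)<\delta$. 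Taking the supremum in Definition \ref{defnalmostdiscrepancy} and letting $\delta\to 0$ gives $\alpha(P,X',\Delta')\geq -1$, so $\langle X',\Delta'\rangle$ is pseudo-lc.

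The main obstacle is the third step, the construction of the effective $\mathbb{R}$-divisor $N\sim_{\mathbb{R}}L_\epsilon|_{X_W}$ with $v_P(N)$ arbitrarily small. Since $L_\epsilon$ is only $\mathbb{R}$-Cartier and $X_W$ is merely normal, one must approximate $L_\epsilon$ by $\mathbb{Q}$-Cartier divisors that are still ample over $W$, invoke relative base-point-freeness over the affine $W$ to produce effective representatives, and use that $L_\epsilon$ is nef so that its Nakayama--Zariski negative part vanishes, ensuring the general effective representative has vanishing order along $P$ tending to zero.
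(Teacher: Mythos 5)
Your proposal is correct and follows essentially the same route as the paper: both arguments perturb $M$ by a small multiple of an ample divisor, produce an effective $\mathbb{R}$-linearly equivalent representative with small multiplicity along $P$, push it forward to $X'$ to obtain an admissible divisor $B_{U}$ in Definition \ref{defnalmostdiscrepancy}, and conclude from the (generalized/sub-) lc coefficient bound by letting the perturbation tend to $0$. The only substantive difference lies in the step you flag as the main obstacle: the paper handles it elementarily by writing $f^{*}A'\sim_{\mathbb{R}}H+G$ with $H$ ample on the resolution and $G\geq 0$, so that a general member $H_{t}\sim_{\mathbb{R}}tH+M$ of an ample class avoids $P$ and the error is just $t\cdot{\rm coeff}_{P}(G)\to 0$; you should justify the existence of your $N$ the same way (Kodaira's lemma applied to the big divisor $\phi^{*}A$, i.e.\ nef and big equals ample plus a small effective part) rather than via ``relative base-point-freeness,'' which a nef and big divisor need not satisfy --- although the fact you actually need, namely $\sigma_{P}(L_{\epsilon})=0$ for $L_{\epsilon}$ nef and big over the affine base, is indeed true.
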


\begin{proof}
By definition of pseudo-lc pairs, we can shrink $X'$ and $Z$. 
Therefore, we may assume that $Z$ is affine and there is an ample divisor on $X'$. 
We fix a prime divisor $P$ over $X'$, and we show $\alpha(P,X',\Delta')\geq-1$. 
We denote $X\to X'$ by $f$. 
By replacing $X$, we may assume that $f$ is a log resolution of $\langle X',{\rm Supp}\Delta'\rangle$ such that $P$ is a divisor on $X$. 
We can write $K_{X}+\Delta+M=f^{*}(K_{X'}+\Delta'+M')$, where $(X,\Delta)$ is sub-lc. 
Pick an ample divisor $A'$ on $X'$ and write $f^{*}A'\sim_{\mathbb{R}}H+G$, where $H$ is ample and $G\geq0$. 
For any $t>0$, we pick a general member $H_{t}\sim_{\mathbb{R}}tH+M$ such that $H_{t}\geq0$ and ${\rm Supp}H_{t}\nsupseteq P$. 
Then, we have $K_{X'}+\Delta'+f_{*}(H_{t}+tG)\sim_{\mathbb{R}}K_{X'}+\Delta'+M'+tA'$ and so $K_{X'}+\Delta'+f_{*}(H_{t}+tG)$ is $\mathbb{R}$-Cartier. 
We also have $f_{*}(H_{t}+tG)\geq0$ and
$$K_{X}+\Delta+H_{t}+tG=f^{*}(K_{X'}+\Delta'+f_{*}(H_{t}+tG))$$ for any $t>0$. 
Since $(X,\Delta)$ is sub-lc, by definition of $\alpha(P,X',\Delta')$, we have 
$$\alpha(P,X',\Delta')\geq {\rm coeff}_{P}(-\Delta-tG)\geq -1-t\cdot{\rm coeff}_{P}(G)$$
for any $t>0$. 
So $\alpha(P,X',\Delta')\geq -1$, and we see that $\langle X',\Delta' \rangle$ is pseudo-lc. 
\end{proof}

\begin{rem}\label{remdiffpseudolc}
We give two remarks on Example \ref{examnotdfhlc}. 
\begin{itemize}
\item[(1)]
Example \ref{examnotdfhlc} shows that there is a generalized lc pair with zero boundary part $(Z, M_{Z})$ such that there is no divisor $B$ with which the pair $(Z,B)$ is lc. 
Indeed, with notation as in Example \ref{examnotdfhlc}, put $N=-f^{*}K_{X}+\pi^{*}H$, which is nef by construction of $X$. 
Then we have 
$K_{Y}+S+N\sim_{\mathbb{Q}}0$.
Since $(Y,S)$ is plt, $(Z,M_{Z}:=g_{*}N)$ is a generalized lc pair which comes with the data $\pi\colon Y\to Z$ and $N$. 
But, as we have seen in Example \ref{examnotdfhlc}, there is no boundary divisor $B$ such that the pair $(Z,B)$ is lc (\cite[Proposition 7.2]{dfh}). 
\item[(2)]
Example \ref{examnotdfhlc} gives a negative answer to question (b) in \cite[Section 0]{bdff}. 
Indeed, with notation as in Example \ref{examnotdfhlc}, take $X$ as a smooth surface as in \cite[Example 1.1]{shokurovcompl}. 
Then $Z$ has only one isolated singular point $z_{0}=\pi(S)$. 
Since $\langle Z,0 \rangle$ is pseudo-lc and by Theorem \ref{thmnefenvelope}, we see that the log discrepancy $b$-divisor as in \cite[Definition 3.4]{bdff} is effective. 
Therefore, if ${\rm Vol}(Z,z_{0})$ is the volume defined in \cite[Definition 4.18]{bdff}, then we have ${\rm Vol}(Z,z_{0})=0$ by \cite[Proposition 4.19]{bdff}. 
But there is no boundary divisor $B$ such that the pair $(Z,B)$ is lc. 
For argument using notions of volumes, see \cite{yzhang}. 
\end{itemize}
\end{rem}

From now on, we prove the main result of this paper.

\begin{thm}\label{thmbirat}
Let $\langle X,\Delta \rangle$ be a pair such that $\Delta$ is a boundary $\mathbb{R}$-divisor. 
Then, there is a projective birational morphism $h\colon W\to X$ from a normal variety $W$ such that
\begin{itemize}
\item
any $h$-exceptional prime divisor $E_{h}$ satisfies $\alpha(E_{h},X,\Delta)<-1$, 
\item
the reduced $h$-exceptional divisor $E_{\rm red}$ is $\mathbb{Q}$-Cartier, and
\item
if we put $\Delta_{W}=h_{*}^{-1}\Delta+E_{\rm red}$, then $K_{W}+\Delta_{W}$ is $\mathbb{R}$-Cartier and the pair $(W,\Delta_{W})$ is lc. 
\end{itemize}
\end{thm}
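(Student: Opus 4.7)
My plan is to apply Theorem~\ref{thmrelmmp} to a suitable pair on a log resolution, and then to refine the output via Lemma~\ref{lemlcmodel} in order to secure the $\mathbb{Q}$-Cartier condition on the exceptional divisor.

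First I would take a log resolution $f\colon Y \to X$ of $\langle X,\Supp\Delta\rangle$, let $E=\sum_{i} E_{i}$ be the reduced $f$-exceptional divisor, and set $\Delta_{Y}=f_{*}^{-1}\Delta+E$. The pair $(Y,\Delta_{Y})$ is log smooth, hence dlt. I would apply Theorem~\ref{thmrelmmp} to $(Y,\Delta_{Y})\to X$. To check its pseudo-effectivity hypotheses, I would use that $f$ is birational, so general fibers are points and relative numerical dimensions vanish; combining the boundary assumption on $\Delta$ with the negativity lemma exhibits $-(K_{Y}+\Delta_{Y})$, and its restrictions to normalizations of lc centers of $(Y,\Delta_{Y})$, as pseudo-effective classes over $X$. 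Theorem~\ref{thmrelmmp} then furnishes a good minimal model over $X$; the Mori fiber space alternative is excluded by the birationality of $Y\to X$. Contracting along the relative Iitaka fibration produces a log canonical model $h_{0}\colon W_{0}\to X$ with $K_{W_{0}}+\Delta_{W_{0}}$ being $h_{0}$-ample and $(W_{0},\Delta_{W_{0}})$ lc.

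Next I would show that every $h_{0}$-exceptional prime divisor $E_{h}$ on $W_{0}$ satisfies $\alpha(E_{h},X,\Delta)<-1$. Since $E_{h}$ survives to $W_{0}$ with coefficient one in $\Delta_{W_{0}}$, we have $a(E_{h},W_{0},\Delta_{W_{0}})=-1$. If instead $\alpha(E_{h},X,\Delta)\geq -1$, Theorem~\ref{proppairdiscrepancy} provides, for each $\varepsilon>0$, an effective $G$ with $K_{X}+\Delta+G$ being $\mathbb{R}$-Cartier and $a(E_{h},X,\Delta+G)\geq -1-\varepsilon$. Comparing $f^{*}(K_{X}+\Delta+G)$ with $K_{Y}+\Delta_{Y}$ on $Y$, pushing forward to $W_{0}$, and invoking the negativity lemma together with the $h_{0}$-ampleness of $K_{W_{0}}+\Delta_{W_{0}}$, one derives a contradiction for sufficiently small $\varepsilon$.

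For the $\mathbb{Q}$-Cartier condition on the reduced exceptional divisor, I would invoke Lemma~\ref{lemlcmodel}. Fix a small rational $s>0$, replace the coefficient of $E$ in $\Delta_{Y}$ by $1-s$ to form $\Delta'_{Y}$, and take $D=sE$; then $\Delta'_{Y}+tD=f_{*}^{-1}\Delta+\bigl(1-s(1-t)\bigr)E$ is a boundary for every $0\leq t<1$, and the MMP argument used above (applied now to a pair that is klt along the exceptional divisors) produces a log canonical model of $(Y,\Delta'_{Y}+tD)$ over $X$ for each such $t$. Lemma~\ref{lemlcmodel} then yields a single birational contraction onto a model $W$ on which the proper transform of $D$, equivalently $E_{\rm red}$, is $\mathbb{R}$-Cartier; the rationality of $s$ upgrades $\mathbb{R}$-Cartier to $\mathbb{Q}$-Cartier. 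Checking that $W$ still satisfies the remaining two conditions, which one expects since $W$ is the log canonical model of a small perturbation of $(Y,\Delta_{Y})$, completes the construction. The main obstacle is verifying the pseudo-effectivity hypotheses of Theorem~\ref{thmrelmmp} in this relative birational setting, in particular on the normalizations of lc centers of $(Y,\Delta_{Y})$ over $X$, and then coordinating the perturbation in the last step so that the resulting $W$ inherits simultaneously the $\mathbb{Q}$-Cartier property, the discrepancy bound, and the log-canonicity condition.
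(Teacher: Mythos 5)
There is a genuine gap, and it sits exactly where you flag "the main obstacle": the pseudo-effectivity hypotheses of Theorem~\ref{thmrelmmp} for the pair $(Y,\Delta_{Y})$ with the \emph{full} reduced exceptional divisor in the boundary. For the total space the condition is indeed vacuous (general fibres of $Y\to X$ are points), but for an lc center $S$ of $(Y,\Delta_{Y})$ contained in $\Exc(f)$ the Stein factorization of $S^{\nu}\to X$ has positive-dimensional general fibre, so pseudo-effectivity of $-(K_{Y}+\Delta_{Y})|_{S^{\nu}}$ over $X$ is a real condition; and the negativity lemma is unavailable because $K_{X}+\Delta$ is not $\mathbb{R}$-Cartier, so there is no comparison divisor to apply it to. This is not a technicality: if your first two paragraphs went through, the output $(W_{0},\Delta_{W_{0}})$ with $K_{W_{0}}+\Delta_{W_{0}}$ $h_{0}$-ample and lc would be an lc modification of an \emph{arbitrary} boundary pair $\langle X,\Delta\rangle$, and by the paper's own Example~\ref{exammodel} this already implies the non-vanishing theorem for every smooth projective $X$ with $K_{X}$ pseudo-effective. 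The paper avoids this by never putting coefficient $1$ on the "bad" exceptional divisors: it isolates $G=\sum\{E_{i}:\alpha(E_{i},X,\Delta)<-1\}$, works with $(Y,\Gamma-tG)$ for small $t>0$, and--crucially--first builds a special resolution by iterated blow-ups (Step~\ref{step2.1}) so that every lc center of $(Y,\Gamma-G)$ is the center of some divisor $Q$ with $a(Q,Y,\Gamma-G)=-1$ and $\alpha(Q,X,\Delta)\geq-1$; the pseudo-effectivity on lc centers is then proved (Step~\ref{step2.3}) by pulling back the approximating $\mathbb{R}$-Cartier divisors $K_{X}+\Delta+C_{k}$ from Theorem~\ref{proppairdiscrepancy} and restricting to $Q$, with error terms $\beta_{k}\to0$. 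Without the divisor $G$ and this preparatory blow-up, the hypothesis of Theorem~\ref{thmrelmmp} cannot be verified and in general fails.

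Two further steps are also incomplete. First, your contradiction argument for a surviving exceptional divisor with $\alpha(E_{h},X,\Delta)\geq-1$ only yields (via relative ampleness and negativity) $a(E_{h},X,\Delta+G)\leq-1$ for all admissible $G$, i.e.\ $\alpha(E_{h},X,\Delta)\leq-1$; this does not exclude $\alpha(E_{h},X,\Delta)=-1$, which is the delicate case. The paper contracts such divisors by a different mechanism (Step~\ref{step2.35}): they are lc centers of $(Y'',\Gamma''-tG'')$, the restriction of $-(K_{Y''}+\Gamma''-tG'')$ to them is pseudo-effective over $X$ while $K_{Y''}+\Gamma''-tG''$ is semi-ample over $X$, forcing numerical triviality on general fibres and hence contraction. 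Second, in your last paragraph the lc-ness of the limiting pair $(W,\Delta_{W})$ with full coefficient $1$ on $E_{\rm red}$ does not follow from $W$ being "the log canonical model of a small perturbation": for each fixed $t>0$ one only knows ${\rm lct}(W_{t},\Gamma_{W_{t}}-G_{W_{t}};G_{W_{t}})\geq1-t$, and the paper needs the ACC for log canonical thresholds (\cite[Theorem 1.1]{hmx-acc}) applied to a sequence $t_{n}\to0$ to conclude that the threshold actually equals $1$ for some $n$. Your use of Lemma~\ref{lemlcmodel} to obtain the $\mathbb{Q}$-Cartier property of the (reduced, hence rational) exceptional divisor is the right idea and matches the paper, but it must be applied to the perturbation by $G$ rather than by all of $E$, and it must be coupled with the ACC argument.
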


\begin{proof}
We prove it in several steps. 
\begin{step2}\label{step2.1}
In this step, we construct a special log resolution of $\langle X,{\rm Supp}\Delta\rangle$ used in this proof. 

Let $f\colon Y\to X$ be a log resolution of $\langle X,{\rm Supp}\Delta\rangle$, and let $\Gamma$ be the sum of $f_{*}^{-1}\Delta$ and the reduced $f$-exceptional divisor.  
Let $G$ be the reduced divisor on $Y$ which is the sum of all $f$-exceptional prime divisors whose discrepancy $\alpha(\,\cdot\,,X,\Delta)$ is less than $-1$. 
By construction, we have $\Gamma-G\geq0$ and $\alpha(D,X,\Delta)\geq -1$ for any component $D$ of $\Gamma-G$ (see Lemma \ref{lembasic} (ii)). 
Suppose that there is an lc center $S_{0}$ of $(Y,\Gamma-G)$ such that for any prime divisor $P_{0}$ over $Y$ with $c_{Y}(P_{0})=S_{0}$ and $a(P_{0},Y,\Gamma-G)=-1$, we have $\alpha(P_{0},X,\Delta)<-1$. 
We take the blow-up $f_{1}\colon Y_{1}\to Y$ along $S_{0}$ and we set $\Gamma_{1}=f_{1*}^{-1}\Gamma+E_{1}$ and $G_{1}=f_{1*}^{-1}G+E_{1}$, where $E_{1}$ is the unique $f_{1}$-exceptional divisor. 
Note that $\alpha(E_{1},X,\Delta)<-1$ since we have $c_{Y}(E_{1})=S_{0}$ and $a(E_{1},Y,\Gamma-G)=-1$. 
We also see that $G_{1}$ is the sum of all $(f\circ f_{1})$-exceptional prime divisors on $Y_{1}$ whose discrepancy $\alpha(\,\cdot\,,X,\Delta)$ is less than $-1$. 
Suppose that there is an lc center $S_{1}$ of $(Y_{1},\Gamma_{1}-G_{1})$ such that for any prime divisor $P_{1}$ over $Y_{1}$ with $c_{Y_{1}}(P_{1})=S_{1}$ and $a(P_{1},Y_{1},\Gamma_{1}-G_{1})=-1$, we have $\alpha(P_{1},X,\Delta)<-1$. 
We take the blow-up $f_{2}\colon Y_{2}\to Y_{1}$ along $S_{1}$ and we set $\Gamma_{2}=f_{2*}^{-1}\Gamma_{1}+E_{2}$ and $G_{2}=f_{2*}^{-1}G_{1}+E_{2}$, where $E_{2}$ is the unique $f_{2}$-exceptional divisor. 
Then $\alpha(E_{2},X,\Delta)<-1$, and $G_{2}$ is the sum of all exceptional prime divisors over $X$ whose discrepancy $\alpha(\,\cdot\,,X,\Delta)$ is less than $-1$. 
By the standard argument, this process eventually stops. 

In this way, we obtain a log resolution $f\colon Y\to X$ of $\langle X,{\rm Supp}\Delta\rangle$, an effective $\mathbb{R}$-divisor $\Gamma$ and an effective $f$-exceptional divisor $G$ on $Y$ such that 
\begin{itemize}
\item
$\Gamma$ is the sum of $f_{*}^{-1}\Delta$ and the reduced $f$-exceptional divisor, 
\item
$G=0$ or it is a reduced divisor,
\item
for any $f$-exceptional prime divisor $E_{f}$ on $Y$, $E_{f}$ is a component of $G$ if and only if $\alpha(E_{f},X,\Delta)<-1$, and  
\item
for any lc center $S$ of $(Y,\Gamma-G)$, there is a prime divisor $Q$ over $X$ such that $c_{Y}(Q)=S$, $a(Q,Y,\Gamma-G)=-1$ and $\alpha(Q,X,\Delta)\geq-1$. 
\end{itemize}
\end{step2}

\begin{step2}\label{step2.2}
From this step to Step \ref{step2.35}, we prove that for any $0<t\leq 1$, there is the log canonical model $(W_{t}, \Gamma_{W_{t}}-tG_{W_{t}})$ of $(Y,\Gamma -tG)$ over $X$ such that any exceptional prime divisor $P$ of the morphism $W_{t}\to X$ satisfies $\alpha(P,X,\Delta)<-1$. 
We fix $0<t\leq 1$. 
Note that the conditions of $\Gamma$ and $G$ stated in Step \ref{step2.1} hold even if we restrict $f\colon Y\to X$ over an affine open subset of $X$. 
Since the log canonical model can be constructed locally, from this step to Step \ref{step2.35}, we assume that $X$ is affine. 

We run the $(K_{Y}+\Gamma-tG)$-MMP over $X$ with scaling of an ample divisor. 
After finitely many steps, we obtain a model $f'\colon (Y',\Gamma'-tG')\to X$ such that $K_{Y'}+\Gamma'-tG'$ is the limit of movable divisors over $X$, where $\Gamma'$ and $G'$ are the birational transforms of $\Gamma$ and $G$ on $Y'$, respectively. 
Then, for any $f'$-exceptional prime divisor $E'$ on $Y'$, we have $\alpha(E',X,\Delta)\leq-1$. 
Indeed, if $\alpha(E',X,\Delta)>-1$ for an $f'$-exceptional prime divisor $E'$, by Theorem \ref{proppairdiscrepancy}, there is an $\mathbb{R}$-divisor $B\geq0$ on $X$ such that $K_{X}+\Delta+B$ is $\mathbb{R}$-Cartier and $a(E',X,\Delta+B)>-1$. 
Then, 
\begin{equation*}
\begin{split}
K_{Y'}+\Gamma'-tG'=&f'^{*}(K_{X}+\Delta+B)+M'-f_{*}'^{-1}B-tG',
\end{split}
\end{equation*}
where $M'$ is an $f'$-exceptional divisor on $Y'$. 
Since $a(E',X,\Delta+B)>-1$ and $\Gamma'$ contains the reduced $f'$-exceptional divisor, the effective part of $M'$ contains $E'$ in its support. 
By construction of $G$ (see the third condition of Step \ref{step2.1} in this proof) and since $\alpha(E',X,\Delta)>-1$, we see that $E'$ is not a component of $G'$. 
Therefore, the divisor $M'-f_{*}'^{-1}B-tG'$ has non-zero effective $f'$-exceptional part.  
But it contradicts \cite[Lemma 3.3]{birkar-flip} because $K_{Y'}+\Gamma'-tG'$ is the limit of movable divisors over $X$. 
So we have $\alpha(E',X,\Delta)\leq-1$ for any $f'$-exceptional prime divisor $E'$. 

By the above argument, for any $\mathbb{R}$-divisor $C\geq 0$ on $X$ such that $K_{X}+\Delta+C$ is $\mathbb{R}$-Cartier, we can write
$$K_{Y'}+\Gamma'-tG'=f'^{*}(K_{X}+\Delta+C)-N$$
with an $N\geq0$. 
Then $a(P',Y',\Gamma'-tG')\geq a(P',X,\Delta+C)$ for any prime divisor $P'$ over $X$, where both hand sides are the usual discrepancies. 
By Theorem \ref{proppairdiscrepancy}, we have $a(P',Y',\Gamma'-tG')\geq \alpha(P',X,\Delta)$ for any prime divisor $P'$ over $X$. 
\end{step2}

\begin{step2}\label{step2.3}
We check with Theorem \ref{thmrelmmp} that $(Y',\Gamma'-tG')$ has a good minimal model over $X$. 
Note that in this step, we assume that $X$ is affine. 

It is clear that $-(K_{Y'}+\Gamma'-tG')$ is pseudo-effective over $X$. 
Pick any lc center $S'$ of $(Y',\Gamma'-tG')$. 
Then $S'$ is normal since $(Y',\Gamma'-tG')$ is $\mathbb{Q}$-factorial dlt. 
We prove that the divisor $-(K_{Y'}+\Gamma'-tG')|_{S'}$ is pseudo-effective over $X$. 
By construction, there is an lc center $S$ of $(Y,\Gamma-tG)$ such that the indeterminacy locus of the birational map $Y\dashrightarrow Y'$ does not contain $S$ and $Y\dashrightarrow Y'$ induces a birational map $S\dashrightarrow S'$. 
Since $(Y,\Gamma)$ is lc, $S$ is also an lc center of $(Y,\Gamma-G)$. 
By the fourth condition of Step \ref{step2.1} in this proof, we can find a prime divisor $Q$ over $X$ such that $c_{Y}(Q)=S$, $a(Q,Y,\Gamma-G)=-1$ and $\alpha(Q,X,\Delta)\geq-1$. 
Since $(Y,\Gamma)$ is lc, we have $a(Q,Y,\Gamma-tG)=-1$.
Since the indeterminacy locus of the map $Y\dashrightarrow Y'$ does not contain $S$, we see that $c_{Y'}(Q)=S'$ and $a(Q,Y',\Gamma'-tG')=-1$. 

Let $\overline{f}\colon \overline{Y}\to Y'$ be a log resolution of $(Y',\Gamma'-tG')$ such that $Q$ is a prime divisor on $\overline{Y}$. 
We define an $\mathbb{R}$-divisor $\Psi$ on $\overline{Y}$ by $K_{\overline{Y}}+\Psi=\overline{f}^{*}(K_{Y'}+\Gamma'-tG')$. 
We set  $\overline{f}_{Q}=\overline{f}|_{Q}\colon Q \to S'$. 
Then, $\overline{f}_{Q}$ is surjective and we have
$$-(K_{\overline{Y}}+\Psi)|_{Q}\sim_{\mathbb{R}}\overline{f}_{Q}^{*}\bigl(-(K_{Y'}+\Gamma'-tG')|_{S'}\bigr).$$ 
Therefore, to prove the pseudo-effectivity of $-(K_{Y'}+\Gamma'-tG')|_{S'}$ over $X$, it is sufficient to prove that $-(K_{\overline{Y}}+\Psi)|_{Q}$ is pseudo-effective over $X$. 

We recall that $a(Q,Y',\Gamma'-tG')\geq \alpha(Q,X,\Delta)$ (see the last sentence of Step \ref{step2.2} in this proof). 
Thus, we have  
$$-1=a(Q,Y',\Gamma'-tG')\geq \alpha(Q,X,\Delta)\geq-1,$$
and therefore, we see that $\alpha(Q,X,\Delta)=-1$. 
By Theorem \ref{proppairdiscrepancy}, for any $k\in \mathbb{Z}_{>0}$, we can find an $\mathbb{R}$-divisor $C_{k}\geq0$ on $X$ such that $K_{X}+\Delta+C_{k}$ is $\mathbb{R}$-Cartier and $a(Q,X,\Delta+C_{k})\geq -1-\frac{1}{k}$. 
We set $\beta_{k}=1+a(Q,X,\Delta+C_{k})$. 
Then $-\frac{1}{k}\leq \beta_{k}\leq0$ because we have $a(Q,X,\Delta+C_{k})\leq \alpha(Q,X,\Delta)=-1$ by Definition \ref{defnalmostdiscrepancy}. 

We recall that for any $\mathbb{R}$-divisor $C\geq 0$ on $X$ such that $K_{X}+\Delta+C$ is $\mathbb{R}$-Cartier, we can write $K_{Y'}+\Gamma'-tG'=f'^{*}(K_{X}+\Delta+C)-N$
with an $N\geq0$. 
This fact is stated in the last paragraph of Step \ref{step2.2} in this proof. 
Therefore, with an effective $\mathbb{R}$-divisor $N_{k}$ on $Y'$, we can write $K_{Y'}+\Gamma'-tG'=f'^{*}(K_{X}+\Delta+C_{k})-N_{k}$. 
By a simple calculation of discrepancies, we have 
$${\rm coeff}_{Q}(-\overline{f}^{*}N_{k})=a(Q,X,\Delta+C_{k})-a(Q,Y',\Gamma'-tG')=a(Q,X,\Delta+C_{k})+1=\beta_{k}.$$
Therefore, if we put $\overline{N}_{k}=\overline{f}^{*}N_{k}+\beta_{k}Q$, we have $\overline{N}_{k}\geq0$ and
$-\overline{f}^{*}N_{k}=\beta_{k}Q-\overline{N}_{k}.$ 
We also see that ${\rm Supp}\overline{N}_{k}\nsupseteq Q$ for any $k$ because we have ${\rm coeff}_{Q}(-\overline{f}^{*}N_{k})=\beta_{k}$. 
Furthermore, since we have $K_{\overline{Y}}+\Psi=\overline{f}^{*}(K_{Y'}+\Gamma'-tG')$ by construction, we can write 
\begin{equation*}
\begin{split}
K_{\overline{Y}}+\Psi=\overline{f}^{*}(K_{Y'}+\Gamma'-tG')&=\overline{f}^{*}f'^{*}(K_{X}+\Delta+C_{k})-\overline{f}^{*}N_{k}
\sim_{\mathbb{R},X}\beta_{k}Q-\overline{N}_{k}. 
\end{split}
\end{equation*}
From these facts, we have 
$$-(K_{\overline{Y}}+\Psi)|_{Q}+\beta_{k}Q|_{Q}\sim_{\mathbb{R},X}\overline{N}_{k}|_{Q}\geq0.$$ 
Since ${\rm lim}_{k\to \infty}\beta_{k}= 0$, we see that $-(K_{\overline{Y}}+\Psi)|_{Q}$ is pseudo-effective over $X$. 
By the argument in the third paragraph of this step, $-(K_{Y'}+\Gamma'-tG')|_{S'}$ is pseudo-effective over $X$. 
Since $S'$ is any lc center of $(Y',\Gamma'-tG')$, the morphism $(Y',\Gamma'-tG')\to X$ satisfies the hypothesis of Theorem \ref{thmrelmmp}. 
We note again that in this step, $X$ is assumed to be affine. 
In this way, we see that $(Y',\Gamma'-tG')$ has a good minimal model over $X$. 
\end{step2}

\begin{step2}\label{step2.35}
We successively assume that $X$ is affine. 
We run the $(K_{Y'}+\Gamma' - tG')$-MMP over $X$, and we get a good minimal model $(Y',\Gamma'-tG') \dashrightarrow (Y'',\Gamma''-tG'')$ over $X$. 
Let $Y'' \to W_{t}$ be the contraction over $X$ induced by $K_{Y''}+\Gamma''-tG''$. 
Because the birational map $Y\dashrightarrow Y''$ is a sequence of steps of the $(K_{Y}+\Gamma - tG)$-MMP over $X$, the pair $(W_{t}, \Gamma_{W_{t}}-tG_{W_{t}})$ is the log canonical model of $(Y,\Gamma-tG)$ over $X$, where $\Gamma_{W_{t}}$ and $G_{W_{t}}$ are the birational transforms of $\Gamma$ and $G$ on $W_{t}$, respectively. 
We denote the morphism $Y''\to X$ by $f''$. 
Now we have the following diagram.
$$
\xymatrix
{
(Y,\Gamma-tG)\ar@{-->}[r]\ar[dr]_(0.4){f}&(Y',\Gamma'-tG')\ar@{-->}[r]\ar[d]_{f'}&(Y'',\Gamma-tG'')\ar[r]\ar[dl]_(0.65){f''}&(W_{t}, \Gamma_{W_{t}}-tG_{W_{t}})\ar[dll]\\
&X
}
$$

We prove that any exceptional prime divisor $P$ of the morphism $W_{t}\to X$ satisfies $\alpha(P,X,\Delta)<-1$. 
To prove this, we prove that the morphism $Y''\to W_{t}$ contracts all $f''$-exceptional prime divisors $E''$ satisfying $\alpha(E'',X,\Delta)\geq-1$. 
By construction, $\Gamma''$ is the sum of $f_{*}''^{-1}\Delta$ and the reduced $f''$-exceptional divisor.  
We recall the third condition on $\Gamma$ and $G$ stated in Step \ref{step2.1} in this proof. 
From the condition, $E''$ is not a component of $G''$, and hence $E''$ is an lc center of $(Y'',\Gamma''-tG'')$. 
We also recall that the restriction $-(K_{Y'}+\Gamma'-tG')|_{S'}$ is pseudo-effective over $X$ for any lc center $S'$ of $(Y',\Gamma'-tG')$, which is proved in Step \ref{step2.3}. 
Therefore, by taking a common resolution of the map $Y'\dashrightarrow Y''$ and applying \cite[Lemma 4.2.10]{fujino-sp-ter}, we see that the divisor $-(K_{Y''}+\Gamma''-tG'')|_{E''}$ on $E''$ is pseudo-effective over $X$. 
On the other hand, since $(Y'',\Gamma''-tG'')$ is a good minimal model over $X$, the divisor $K_{Y''}+\Gamma''-tG''$ is semi-ample over $X$. 
From these facts, we see that the restriction of $(K_{Y''}+\Gamma''-tG'')|_{E''}$ to any sufficiently general fiber of the morphism $E''\to X$ is numerically trivial. 
This implies that the morphism $Y''\to W_{t}$ contracts all sufficiently general fibers of $E''\to X$. 
In particular, $E''$ is contracted by $Y''\to W_{t}$. 
In this way, we see that the morphism $Y''\to W_{t}$ contracts all $f''$-exceptional prime divisors $E''$ on $Y''$ satisfying $\alpha(E'',X,\Delta)\geq-1$. 
\end{step2}

\begin{step2}\label{step2.4}
In this step, $X$ is not necessarily affine. 
Let $f\colon(Y,\Gamma)\to X$ and $G$ be as in Step \ref{step2.1}. 
By steps \ref{step2.2}, \ref{step2.3} and \ref{step2.35}, for any $0<t\leq 1$, there exists the log canonical model $(W_{t}, \Gamma_{W_{t}}-tG_{W_{t}})$ of $(Y,\Gamma-tG)$ over $X$ such that any exceptional prime divisor $P$ of the morphism $W_{t}\to X$ satisfies $\alpha(P,X,\Delta)<-1$. 
Since $G_{W_{t}}$ is the birational transform of $G$ on $W_{t}$, it is the reduced exceptional divisor of $W_{t}\to X$ (see the second condition of Step \ref{step2.1} in this proof). 

Let $\{e_{n}\}_{n\geq1}$ be a strictly decreasing sequence of positive real numbers such that $e_{n}\leq 1$ and ${\rm lim}_{n\to \infty}e_{n}=0$. 
We apply Lemma \ref{lemlcmodel} to $(Y,\Gamma-e_{n}G)\to X$ and $e_{n}G$.
For each $n$, we can find $t_{n}\in(0, e_{n})$ and a birational contraction $Y\dashrightarrow W_{t_{n}}$ such that $(W_{t_{n}}, \Gamma_{W_{t_{n}}}-t_{n}G_{W_{t_{n}}})$ is the log canonical model of $(Y,\Gamma-t_{n}G)$ over $X$ and $G_{W_{t_{n}}}$ is $\mathbb{Q}$-Cartier. 
By construction, the pair $(W_{t_{n}}, \Gamma_{W_{t_{n}}}-G_{W_{t_{n}}})$ is lc, ${\rm lim}_{n\to \infty}t_{n}=0$ and the log canonical threshold ${\rm lct}(W_{t_{n}},\Gamma_{W_{t_{n}}}-G_{W_{t_{n}}};G_{W_{t_{n}}})$ is not less than $1-t_{n}$.  
By \cite[Theorem 1.1]{hmx-acc}, we can find $n$ such that ${\rm lct}(W_{t_{n}},\Gamma_{W_{t_{n}}}-G_{W_{t_{n}}};G_{W_{t_{n}}})=1$. 
For this $n$, put $W=W_{t_{n}}$, $\Delta_{W}=\Gamma_{W_{t_{n}}}$ and $G_{W}=G_{W_{t_{n}}}$. 
We denote the morphism $W\to X$ by $h$. 

We check that $h\colon (W,\Delta_{W})\to X$ satisfies all the conditions of Theorem \ref{thmbirat}. 
The first condition  of Theorem \ref{thmbirat} follows from construction of $h\colon W\to X$ (see steps \ref{step2.2}, \ref{step2.3} and \ref{step2.35}, or the third sentence of this step). 
Recall that $G_{W}$ is the reduced $h$-exceptional divisor (see the last sentence in the first paragraph of this step). 
We put $E_{\rm red}=G_{W}$, which is $\mathbb{Q}$-Cartier. 
Therefore, $E_{\rm red}$ satisfies the second condition of Theorem \ref{thmbirat}. 
We have $\Delta_{W}=h_{*}^{-1}\Delta+E_{\rm red}$ by construction of $\Gamma$ in Step \ref{step2.1} in this proof. 
Since $K_{W}+\Delta_{W}-E_{\rm red}$ is $\mathbb{R}$-Cartier and ${\rm lct}(W,\Delta_{W}-E_{\rm red};E_{\rm red})=1$, the third condition of Theorem \ref{thmbirat} is satisfied. 
\end{step2}
So we complete the proof.
\end{proof}

\begin{rem}
The proof of Theorem \ref{thmbirat} shows that for any pair $\langle X,\Delta \rangle$ such that $\Delta$ is a boundary $\mathbb{R}$-divisor and any $t>0$, we can construct $h\colon W\to X$ as in Theorem \ref{thmbirat} such that $K_{W}+\Delta_{W}-t' E_{{\rm red}}$ is $h$-ample for some $t'\in(0,t)$.   
Indeed, when we carry out the argument in Step \ref{step2.4}, we pick a strictly decreasing sequence $\{e_{n}\}_{n\geq1}$  so that $e_{1}<t$. 
With notations as in Step \ref{step2.4}, by construction of $h\colon W\to X$ the pair $(W,\Delta_{W}-t_{n}E_{{\rm red}})$ is the log canonical model of $(Y,\Gamma-t_{n}G)$ over $X$ for some $t_{n}\in (0,e_{n})$. 
Since $e_{n}<e_{1}<t$, putting $t'=t_{n}$ the divisor $K_{W}+\Delta_{W}-t' E_{{\rm red}}$ is $h$-ample. 
\end{rem}

\begin{cor}\label{corsurface}
Let $\langle X,\Delta \rangle$ be a pair. 
If $X$ is a surface, then $\langle X,\Delta \rangle$ is pseudo-lc if and only if $K_{X}+\Delta$ is $\mathbb{R}$-Cartier and $(X,\Delta)$ is lc. 
 \end{cor}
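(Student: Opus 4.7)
The plan is to handle both directions of this equivalence, with the nontrivial direction being a quick application of Theorem \ref{thmbirat}.

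For the ``if'' direction, suppose $K_X+\Delta$ is $\mathbb{R}$-Cartier and $(X,\Delta)$ is lc. Then by Lemma \ref{lembasic}(i), $\alpha(P,X,\Delta)=a(P,X,\Delta)\geq -1$ for any prime divisor $P$ over $X$, so $\langle X,\Delta\rangle$ is pseudo-lc. This direction works in any dimension and uses neither the surface hypothesis nor Theorem \ref{thmbirat}.

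For the ``only if'' direction, suppose $\langle X,\Delta\rangle$ is pseudo-lc. First, I would observe that $\Delta$ is automatically a boundary $\mathbb{R}$-divisor: by Lemma \ref{lembasic}(ii), for any component $P$ of $\Delta$ we have $-\mathrm{coeff}_P(\Delta)=\alpha(P,X,\Delta)\geq -1$, so every coefficient lies in $[0,1]$. Hence Theorem \ref{thmbirat} applies to $\langle X,\Delta\rangle$ and produces a projective birational morphism $h\colon W\to X$ whose exceptional divisors all have discrepancy strictly less than $-1$ and for which $K_W+\Delta_W$ is $\mathbb{R}$-Cartier with $(W,\Delta_W)$ lc.

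The key step is now to use the pseudo-lc hypothesis to rule out any $h$-exceptional divisor: if $E_h$ were one, then $\alpha(E_h,X,\Delta)<-1$ would contradict pseudo-lc-ness. Thus $h$ is a small projective birational morphism. This is where the surface hypothesis enters: since $X$ is a normal surface, any proper birational morphism $W\to X$ from a normal variety that contracts no divisor is an isomorphism (every nontrivial contraction from a normal surface contracts a curve, which is a divisor). Hence $h$ is an isomorphism, $\Delta_W$ corresponds to $\Delta$, and $K_X+\Delta$ is $\mathbb{R}$-Cartier with $(X,\Delta)$ lc.

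There is no real obstacle: the whole argument amounts to Theorem \ref{thmbirat} plus the elementary fact that small birational morphisms between normal surfaces are isomorphisms. The content is that Theorem \ref{thmbirat} produces a small lc modification in the surface case automatically, which forces $\langle X,\Delta\rangle$ to already be lc.
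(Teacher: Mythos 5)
Your proof is correct and is exactly the argument the paper intends: the corollary is stated immediately after Theorem \ref{thmbirat} with no written proof, and the intended reasoning is precisely that pseudo-lc-ness forces the morphism $h$ of Theorem \ref{thmbirat} to be small, hence an isomorphism in the surface case. Your additional observation that Lemma \ref{lembasic}(ii) guarantees $\Delta$ is a boundary divisor (so that Theorem \ref{thmbirat} applies) is a worthwhile detail that the paper leaves implicit.
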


Note that $(W,\Delta_{W})$ in Theorem \ref{thmbirat} is not an lc modification of $\langle X,\Delta \rangle$. 
If there is an lc modification $(X',\Delta')$ of $\langle X,\Delta \rangle$, then the pair $(X',\Delta')$ satisfies the first and third conditions of Theorem \ref{thmbirat}. 

By the arguments in steps \ref{step2.2}, \ref{step2.3} and \ref{step2.35} in the proof of Theorem \ref{thmbirat}, we obtain the following theorem:

\begin{thm}\label{thmsmalllc}
Let $\langle X,\Delta \rangle$ be a pair such that $\Delta$ is a boundary $\mathbb{R}$-divisor. 
Let $f\colon Y\to X$ be a log resolution of $\langle X,\Delta \rangle$, and let $\Gamma$ be the sum of $f_{*}^{-1}\Delta$ and the reduced $f$-exceptional divisor. 
Suppose that 
\begin{itemize}
\item
for any $f$-exceptional prime divisor $E$, we have $\alpha(E,X,\Delta)\geq-1$, and 
\item
for any lc center $S$ of $(Y,\Gamma)$, there is a prime divisor $Q$ over $X$ such that $c_{Y}(Q)=S$, $a(Q,Y,\Gamma)=-1$ and $\alpha(Q,X,\Delta)\geq-1$. 
\end{itemize}
Then, $(Y,\Gamma)$ has the log canonical model $(W,\Delta_{W})$ over $X$ such that the natural morphism $W\to X$ is small.  
In particular, if $\langle X,\Delta \rangle$ is pseudo-lc, then there is an lc modification $h\colon(W,\Delta_{W})\to X$ such that $h$ is small. 
\end{thm}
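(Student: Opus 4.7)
The strategy is to specialize the proof of Theorem \ref{thmbirat} to the case $G=0$. The first hypothesis forces the divisor $G$ constructed in Step \ref{step2.1} of that proof to vanish for our chosen log resolution $f$, and the second hypothesis is precisely the fourth condition of Step \ref{step2.1} specialized to $G=0$. Hence Steps \ref{step2.2}, \ref{step2.3}, and \ref{step2.35} apply verbatim; the scaling trick of Step \ref{step2.4} is unnecessary, and one obtains the log canonical model of $(Y,\Gamma)$ over $X$ directly.

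Concretely, after reducing to $X$ affine (log canonical models glue), I would run the $(K_Y+\Gamma)$-MMP over $X$ with scaling of an ample divisor to reach $(Y',\Gamma')$ on which $K_{Y'}+\Gamma'$ is a limit of movable divisors over $X$. The negativity-lemma argument of Step \ref{step2.2}, combined with Theorem \ref{proppairdiscrepancy}, yields $\alpha(E',X,\Delta)\leq -1$ for every $f'$-exceptional prime divisor $E'$; together with our first hypothesis this pins down $\alpha(E',X,\Delta)=-1$. For each lc center $S'$ of $(Y',\Gamma')$, the second hypothesis applied to the corresponding lc center $S$ of $(Y,\Gamma)$ supplies a prime divisor $Q$ over $X$ with $c_Y(Q)=S$, $a(Q,Y,\Gamma)=-1$, and $\alpha(Q,X,\Delta)\geq -1$; tracking $Q$ through the MMP and invoking the discrepancy inequality again shows $c_{Y'}(Q)=S'$, $a(Q,Y',\Gamma')=-1$, and $\alpha(Q,X,\Delta)=-1$. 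Approximating $\alpha(Q,X,\Delta)$ by usual discrepancies via Theorem \ref{proppairdiscrepancy} and pulling back to a common resolution on which $Q$ is a divisor, exactly as in Step \ref{step2.3}, yields the pseudo-effectivity of $-(K_{Y'}+\Gamma')|_{S'}$ over $X$. Theorem \ref{thmrelmmp} then produces a good minimal model $(Y'',\Gamma'')$ of $(Y',\Gamma')$ over $X$, whose induced contraction $Y''\to W$ is the log canonical model $(W,\Delta_W)$ of $(Y,\Gamma)$ over $X$.

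For the smallness assertion I would imitate Step \ref{step2.35}: every $f''$-exceptional prime divisor $E''$ is a component of $\Gamma''$, hence an lc center of $(Y'',\Gamma'')$, and satisfies $\alpha(E'',X,\Delta)=-1$; the pseudo-effectivity of $-(K_{Y''}+\Gamma'')|_{E''}$ over $X$ (transferred from $Y'$ via a common resolution and \cite[Lemma 4.2.10]{fujino-sp-ter}) combined with the semi-ampleness of $K_{Y''}+\Gamma''$ over $X$ forces $(K_{Y''}+\Gamma'')|_{E''}$ to be numerically trivial on sufficiently general fibers of $E''\to X$. Thus $Y''\to W$ contracts every $f''$-exceptional divisor, and the induced morphism $h\colon W\to X$ is small.

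For the final ``in particular'' assertion, if $\langle X,\Delta \rangle$ is pseudo-lc then the first hypothesis is immediate from the definition. For the second, log smoothness of $(Y,\Gamma)$ lets us produce, for each lc center $S$, a prime divisor $Q$ over $Y$ with $c_Y(Q)=S$ and $a(Q,Y,\Gamma)=-1$ by successive blow-ups of components of $\llcorner\Gamma\lrcorner$ meeting $S$, and pseudo-lc-ness then supplies $\alpha(Q,X,\Delta)\geq -1$. The main technical hurdle throughout is the verification of pseudo-effectivity of $-(K_{Y'}+\Gamma')|_{S'}$ over $X$: without a buffer divisor $G$ with ${\rm Supp}\,G\subset{\rm Supp}\llcorner\Gamma\lrcorner$ one cannot avoid the approximation-by-usual-discrepancies argument of Step \ref{step2.3}, which is the heart of the proof.
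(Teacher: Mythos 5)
Your proposal is correct and takes essentially the same route as the paper, whose entire proof consists of observing that the two hypotheses put one in the $G=0$ case of Step \ref{step2.1} of the proof of Theorem \ref{thmbirat}, so that Steps \ref{step2.2}, \ref{step2.3} and \ref{step2.35} apply without change. The extra detail you supply (the pseudo-effectivity verification, the smallness argument, and the blow-up construction of $Q$ for the ``in particular'' clause) is exactly what those steps contain or what the paper leaves implicit.
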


\begin{proof}
Let $\langle X,\Delta \rangle$ and $f\colon Y\to X$ be as in the theorem. 
Then we are in the same situation as the case where $G=0$ in the final paragraph of Step \ref{step2.1} in the proof of Theorem \ref{thmbirat}. 
So the arguments in steps \ref{step2.2}, \ref{step2.3} and \ref{step2.35} in the proof of Theorem \ref{thmbirat} work with no changes. 
\end{proof}

We would like to remark about lc modifications of $\langle X,\Delta \rangle$. 
If $\Delta$ is a $\mathbb{Q}$-divisor and $K_{X}+\Delta$ is $\mathbb{Q}$-Cartier, an lc modification of $\langle X,\Delta \rangle$ exists (\cite[Theorem 1.1]{ox-lcmodel}). 
But, as we see in Example \ref{exammodel} below, the existence of lc modifications for non-$\mathbb{Q}$-Cartier pairs is in general a very difficult problem. 

\begin{exam}[{\cite[Proof of Lemma 3.2]{fg-lcring}}]\label{exammodel}
Let $X$ be a smooth projective variety such that $K_{X}$ is pseudo-effective. 
Let $A$, $f\colon Y\to X$, $S$ and $\pi\colon Y\to Z$ be as in Example \ref{examnotlc}. 
By construction, we have $K_{Y}+2S+f^{*}A\sim_{\mathbb{Q},Z}K_{Y}+S$. 
Since $S\simeq X$, we have $\kappa(S)=\kappa(X)$, where both hand sides are Kodaira dimensions. 

Suppose that the pair $\langle Z,0 \rangle$ has an lc modification $(Z',\Delta_{Z'})\to Z$. 
Then, $\Delta_{Z'}$ is the reduced exceptional divisor over $Z$, $K_{Z'}+\Delta_{Z'}$ is $\mathbb{Q}$-Cartier and ample over $Z$, and $(Z',\Delta_{Z'})$ is lc. 
We show that $(Y,S)$ has a good minimal model over $Z$. 
Indeed, since $(Z',\Delta_{Z'})$ is lc, we have $a(S,Z',\Delta_{Z'})\geq-1= a(S,Y,S)$. 
By taking a common resolution of the birational map $Y\dashrightarrow Z'$ and by the negativity lemma, we see that $(Z',\Delta_{Z'})$ is a weak lc model of $(Y,S)$ over $X$ with relatively ample log canonical divisor. 
Note that $S$ is the unique exceptional divisor of the map $Y\dashrightarrow Z'$ because $S$ is the unique exceptional divisor of $\pi$. 
By \cite[Remark 2.10]{has-mmp}, we see that $(Y,S)$ has a good minimal model over $Z$. 

Let $(Y,S)\dashrightarrow (Y',S')$ be a sequence of steps of the $(K_{Y}+S)$-MMP over $Z$ to a good minimal model. 
Then, $S$ is not contracted by the log MMP because $K_{S}$ is pseudo-effective. 
Furthermore, we have $(K_{Y'}+S')|_{S'}=K_{S'}$ and $\kappa(S)\geq \kappa(S')$ by construction. 
Since $K_{S'}$ is semi-ample, we have $\kappa(S')\geq0$. 
Then $\kappa(X)\geq0$. 

In this way, the existence of an lc modification of $\langle Z,0 \rangle$ implies the non-vanishing theorem for $X$. 
\end{exam}
By using Theorem \ref{thmsmalllc}, we see that two important theorems for lc pairs hold true in the setting of pseudo-lc pairs.

\begin{thm}\label{thmfinite}
Let $\langle X,\Delta \rangle$ be a pseudo-lc pair such that $\Delta$ is a $\mathbb{Q}$-divisor. 
Then, the graded sheaf of $\mathcal{O}_{X}$-algebra $\bigoplus_{m\geq0}\mathcal{O}_{X}(\llcorner m(K_{X}+\Delta)\lrcorner)$ is finitely generated. 
If $X$ is projective and the minimal model theory holds, then the log canonical ring  
$\bigoplus_{m\geq0}H^{0}(X, \mathcal{O}_{X}(\llcorner m(K_{X}+\Delta)\lrcorner))$
is a finitely generated $\mathbb{C}$-algebra. 
\end{thm}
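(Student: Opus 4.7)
The plan is to transfer the finite generation question from $\langle X,\Delta \rangle$ to the lc pair sitting on a small lc modification. Since $\langle X,\Delta\rangle$ is pseudo-lc, Theorem \ref{thmsmalllc} supplies a small projective birational morphism $h \colon W \to X$ with $(W,\Delta_W)$ lc and $K_W+\Delta_W$ ample over $X$. Smallness of $h$ forces $\Delta_W = h_*^{-1}\Delta$, so $\Delta_W$ is a $\mathbb{Q}$-boundary whenever $\Delta$ is. Moreover, in the small case the construction of Theorem \ref{thmsmalllc} produces $(W,\Delta_W)$ as the log canonical model over $X$ of a dlt $\mathbb{Q}$-pair $(Y,\Gamma)$ via an ordinary relative log MMP with $\mathbb{Q}$-coefficients (no auxiliary real scaling parameter is introduced, in contrast with the non-small situation of Theorem \ref{thmbirat}), so $K_W+\Delta_W$ is in fact $\mathbb{Q}$-Cartier.

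The key identification uses only smallness of $h$: because $h$ is an isomorphism in codimension one, for every Weil divisor $D$ on $W$ one has $h_*\mathcal{O}_W(D)=\mathcal{O}_X(h_*D)$. Fixing $K_W$ with $h_*K_W=K_X$ and applying this to $D=\llcorner m(K_W+\Delta_W)\lrcorner$ yields
$$h_*\mathcal{O}_W\bigl(\llcorner m(K_W+\Delta_W)\lrcorner\bigr)\simeq\mathcal{O}_X\bigl(\llcorner m(K_X+\Delta)\lrcorner\bigr),$$
so the graded $\mathcal{O}_X$-algebra of the theorem is the direct image of the relative log canonical sheaf of $(W,\Delta_W)$, and in the projective case the ring in the statement coincides with the log canonical ring of the projective lc pair $(W,\Delta_W)$.

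For the first assertion I would choose $N\geq 1$ so that the line bundle $L:=\mathcal{O}_W(N(K_W+\Delta_W))$ is Cartier and $h$-ample, and for each $r\in\{0,\dots,N-1\}$ invoke the standard fact that $\bigoplus_{k\geq0}h_*(L^{\otimes k}\otimes\mathcal{O}_W(\llcorner r(K_W+\Delta_W)\lrcorner))$ is a finitely generated module over the finitely generated $\mathcal{O}_X$-algebra $\bigoplus_{k\geq0}h_*L^{\otimes k}$; writing $m=kN+r$ and assembling these modules for $r=0,\dots,N-1$ yields finite generation of $\bigoplus_m h_*\mathcal{O}_W(\llcorner m(K_W+\Delta_W)\lrcorner)$ as an $\mathcal{O}_X$-algebra. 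For the second assertion, $X$ projective implies $W$ is projective, and under the minimal model theory the log canonical ring of the projective lc pair $(W,\Delta_W)$ with $\mathbb{Q}$-boundary is a finitely generated $\mathbb{C}$-algebra, whence so is the ring in the statement. The only point that requires care is verifying the $\mathbb{Q}$-Cartierness of $K_W+\Delta_W$ from the small case of Theorem \ref{thmsmalllc}; once that is in hand, the remaining steps are routine manipulations with Weil sheaves under small birational morphisms and with section rings of relatively ample Cartier divisors.
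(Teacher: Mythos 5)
Your proposal is correct and follows essentially the same route as the paper: take the small lc modification $h\colon(W,\Delta_W)\to X$ from Theorem \ref{thmsmalllc} and reduce finite generation to the relatively ample $\mathbb{Q}$-Cartier divisor $K_W+\Delta_W$ on $W$, the paper simply citing \cite[Lemma 6.2]{kollar-mori} where you unpack that argument by hand. The one point you flag as delicate is in fact immediate: $K_W+\Delta_W$ is $\mathbb{R}$-Cartier by the definition of lc modification and is a $\mathbb{Q}$-Weil divisor since $h$ is small and $\Delta$ is a $\mathbb{Q}$-divisor, hence it is $\mathbb{Q}$-Cartier without any need to trace the MMP construction.
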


\begin{proof}
The first assertion follows from Theorem \ref{thmsmalllc} and \cite[Lemma 6.2]{kollar-mori}. 
Let $h\colon(W, \Delta_{W})\to \langle X,\Delta \rangle$ be as in Theorem \ref{thmsmalllc}. 
Suppose that $X$ is projective and the minimal model theory holds. 
Then, the log canonical ring of $(W,\Delta_{W})$ is finitely generated. 
Since $H^{0}(W, \mathcal{O}_{W}(\llcorner m(K_{W}+\Delta_{W})\lrcorner))\simeq H^{0}(X, \mathcal{O}_{X}(\llcorner m(K_{X}+\Delta)\lrcorner))$, the second assertion holds. 
\end{proof}

\begin{thm}[Kodaira type vanishing theorem]\label{thmkodaira}
Let $\pi\colon X\to Z$ be a projective morphism of normal varieties and $\langle X,\Delta \rangle$ be a pseudo-lc pair. 
Let $D$ be a Weil divisor on $X$ such that $D-(K_{X}+\Delta)$ is $\pi$-ample. 

Then $R^{i}\pi_{*}\mathcal{O}_{X}(D)=0$ for any $i>0$. 
\end{thm}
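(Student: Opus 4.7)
The strategy is to reduce the statement to Fujino's relative vanishing theorem for lc pairs by passing to a small lc modification.

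First, I would apply Theorem \ref{thmsmalllc} to obtain a small lc modification $h\colon (W,\Delta_{W})\to X$ of $\langle X,\Delta \rangle$, so that $K_{W}+\Delta_{W}$ is $h$-ample and $(W,\Delta_{W})$ is lc. Let $D_{W}:=h_{*}^{-1}D$ be the strict transform of $D$; because $h$ is small, $h_{*}\mathcal{O}_{W}(D_{W})=\mathcal{O}_{X}(D)$ as reflexive rank-one sheaves on $X$.

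Second, I would verify the identity
$$D_{W}=(K_{W}+\Delta_{W})+h^{*}L,$$
where $L:=D-(K_{X}+\Delta)$ is the given $\pi$-ample $\mathbb{R}$-Cartier divisor. Both sides are $\mathbb{R}$-Cartier $\mathbb{R}$-divisors on $W$ that agree on the big open subset where $h$ is an isomorphism, so they coincide as Weil divisors and hence as $\mathbb{R}$-Cartier divisors. In particular, $D_{W}$ is $\mathbb{R}$-Cartier and $D_{W}-(K_{W}+\Delta_{W})=h^{*}L$.

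Third, I would apply Fujino's relative vanishing theorem for lc pairs to $(W,\Delta_{W})$ in two ways. For $\pi\circ h\colon W\to Z$, the divisor $h^{*}L$ is $(\pi\circ h)$-nef (since $L$ is $\pi$-ample) and $(\pi\circ h)$-log big with respect to $(W,\Delta_{W})$ (using that $L$ is $\pi$-ample on the image of each lc center and that $h$ is birational), giving $R^{i}(\pi\circ h)_{*}\mathcal{O}_{W}(D_{W})=0$ for $i>0$. For $h\colon W\to X$, note that $D_{W}\equiv_{h}K_{W}+\Delta_{W}$ and the latter is $h$-ample, so $D_{W}$ is itself $h$-ample; combined with the lc structure of $(W,\Delta_{W})$ this yields $R^{j}h_{*}\mathcal{O}_{W}(D_{W})=0$ for $j>0$. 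The Leray spectral sequence
$$R^{p}\pi_{*}\bigl(R^{q}h_{*}\mathcal{O}_{W}(D_{W})\bigr)\Rightarrow R^{p+q}(\pi\circ h)_{*}\mathcal{O}_{W}(D_{W})$$
then collapses and gives $R^{i}\pi_{*}\mathcal{O}_{X}(D)=R^{i}(\pi\circ h)_{*}\mathcal{O}_{W}(D_{W})=0$ for $i>0$, as desired.

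The main obstacle is verifying the hypotheses of Fujino's vanishing, particularly the log bigness conditions. Over $Z$, log bigness requires that for each lc center $S$ of $(W,\Delta_{W})$ with normalization $S^{\nu}$, the pullback $h^{*}L|_{S^{\nu}}$ is big over $Z$; since $L|_{h(S)}$ is ample (restriction of an ample divisor) and $h$ is birational, this reduces to showing that $h|_{S^{\nu}}$ is generically finite onto its image, which holds whenever $S$ is not contained in the exceptional locus of $h$. Over $h$ itself, the divisor $h^{*}L$ is $h$-numerically trivial rather than $h$-big, so the vanishing of $R^{j}h_{*}\mathcal{O}_{W}(D_{W})$ cannot be deduced from Fujino's theorem via $D_{W}-(K_{W}+\Delta_{W})$ directly; instead one exploits the $h$-ampleness of $D_{W}$ itself, arguing either by a perturbation that reduces to a klt situation with $h$-ample twist, or by a direct application of the vanishing machinery for quasi-log canonical schemes.
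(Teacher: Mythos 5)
Your first step agrees with the paper: take the small lc modification $h\colon (W,\Delta_{W})\to X$ from Theorem \ref{thmsmalllc}, observe $D_{W}=K_{W}+\Delta_{W}+h^{*}L$ (both sides push forward to $D$ and $h$ is small), and use $h_{*}\mathcal{O}_{W}(D_{W})=\mathcal{O}_{X}(D)$. But from there your argument has a genuine gap, in fact two. First, your Leray reduction needs $R^{j}h_{*}\mathcal{O}_{W}(D_{W})=0$ for $j>0$, and as you yourself note, $D_{W}-(K_{W}+\Delta_{W})=h^{*}L$ is only $h$-numerically trivial, so no form of Kawamata--Viehweg--Fujino vanishing relative to $h$ applies; the fallbacks you offer (``perturbation to a klt situation with $h$-ample twist'', ``quasi-log machinery'') are not worked out, and the obvious perturbation $D_{W}-(1-\epsilon)(K_{W}+\Delta_{W})$ is $h$-ample but $(1-\epsilon)(K_{W}+\Delta_{W})$ is not of the form $K_{W}+(\text{boundary})$, so this does not reduce to a known statement. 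Second, your vanishing of $R^{i}(\pi\circ h)_{*}\mathcal{O}_{W}(D_{W})$ requires $h^{*}L$ to be log big over $Z$ with respect to $(W,\Delta_{W})$, and you only verify bigness on lc centers \emph{not} contained in $\Exc(h)$; since $h$ is merely small, an lc center of $(W,\Delta_{W})$ may well lie inside $\Exc(h)$ and be contracted by $h$, in which case $h^{*}L$ restricted to it is numerically trivial over its image and log bigness fails.

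The paper avoids both problems by not stopping at $W$: it takes a further log resolution $g\colon Y\to W$, writes $K_{Y}+\Gamma=g^{*}(K_{W}+\Delta_{W})+E$, massages the Weil divisor into a genuine Cartier divisor $L_{Y}=\llcorner g^{*}D_{W}+E\lrcorner+E''$ on the log smooth pair with $(h\circ g)_{*}\mathcal{O}_{Y}(L_{Y})=\mathcal{O}_{X}(D)$ and $L_{Y}-(K_{Y}+B)= (h\circ g)^{*}A$ for a boundary $B$, and then invokes the vanishing theorem in the form $R^{i}\pi_{*}R^{j}(h\circ g)_{*}\mathcal{O}_{Y}(L_{Y})=0$ for $i>0$, $j\geq 0$ (\cite[Theorem 5.6.2 (ii)]{fujino-book}), taking $j=0$. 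That formulation only asks that $L_{Y}-(K_{Y}+B)$ be the pullback of a $\pi$-ample divisor on $X$; it requires no positivity over $X$ itself and no log bigness over $Z$, which is exactly what your two applications of the ``nef and log big'' vanishing cannot supply. To repair your write-up you should replace the Leray decomposition through $W$ by this direct application of the $R^{i}\pi_{*}R^{j}f_{*}$ form of the theorem on a log resolution.
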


\begin{proof}
We put $A=D-(K_{X}+\Delta)$. 
Let $h\colon(W,\Delta_{W})\to \langle X,\Delta \rangle$ be the lc modification as in Theorem \ref{thmsmalllc}, and take a log resolution $g\colon Y\to W$ of $( W,\Delta_{W})$. 
We can write $K_{Y}+\Gamma=g^{*}(K_{W}+\Delta_{W})+E$, where $\Gamma\geq0$ and $E\geq0$ have no common components. 
We set $D_{W}=h_{*}^{-1}D=K_{W}+\Delta_{W}+h^{*}A$. 
Then $K_{Y}+\Gamma+g^{*}h^{*}A=g^{*}D_{W}+E$. 
Since $D_{W}$ is a Weil divisor, the divisor $g^{*}D_{W}+E-\llcorner (g^{*}D_{W}+E)\lrcorner$ is $g$-exceptional. 
If we set $E'=g^{*}D_{W}+E-\llcorner (g^{*}D_{W}+E)\lrcorner$, the divisor $\Gamma-E'$ is sub-boundary and the negative part of $\Gamma-E'$ is $g$-exceptional. 
Therefore, there is a $g$-exceptional Weil divisor $E''\geq 0$ such that $\Gamma-E'+E''$ is a boundary $\mathbb{R}$-divisor.
By construction, we have 
$$K_{Y}+(\Gamma-E'+E'')+g^{*}h^{*}A=\llcorner (g^{*}D_{W}+E)\lrcorner+E''$$
and 
$$(h\circ g)_{*}\mathcal{O}_{Y}(\llcorner (g^{*}D_{W}+E)\lrcorner+E'')=h_{*}\mathcal{O}_{W}(D_{W})=\mathcal{O}_{X}(D),$$
where the second equality follows from that $h$ is small. 
By \cite[Theorem 5.6.2 (ii)]{fujino-book}, we have $R^{i}\pi_{*}R^{j}(h\circ g)_{*}\mathcal{O}_{Y}(\llcorner (g^{*}D_{W}+E)\lrcorner+E'')=0$ for any $i>0$ and $j\geq0$. 
By considering the case when $j=0$, we have $R^{i}\pi_{*}\mathcal{O}_{X}(D)=0$ for any $i>0$. 
\end{proof}

\section{A criterion of log canonicity}\label{sec5}

In this section, we discuss about a sufficient condition of log canonicity. 

\begin{thm}\label{thm5.1}
Let $X$ be a normal quasi-projective variety, and let $\Delta$ be a boundary $\mathbb{R}$-divisor. 
\begin{enumerate}
\item[(1)]
There is $\mathfrak{D}_{1}$ a finite set of prime divisors over $X$ such that if 
$${\rm sup}\{a(P,X,\Delta+G)|\,G\geq0, K_{X}+\Delta+G{\rm \; is\;}\mathbb{R}{\rm \mathchar`-Cartier}\}\geq-1$$ 
for all $P\in \mathfrak{D}_{1}$, then $\langle X,\Delta \rangle$ has a small lc modification. 
In particular, when $\Delta$ is a $\mathbb{Q}$-divisor, the graded sheaf of $\mathcal{O}_{X}$-algebra $\bigoplus_{m\geq0}\mathcal{O}_{X}(\llcorner m(K_{X}+\Delta)\lrcorner)$ is finitely generated. 
\item[(2)]
Suppose that $\langle X,\Delta \rangle$ has a small lc modification. 
Let $x \in X$ be a closed point. 
Then, there is $\mathfrak{D}_{2}$ a finite set of prime divisors over $X$ such that $K_{X}+\Delta$ is $\mathbb{R}$-Cartier and $( X,\Delta )$ is lc in a neighborhood of $x$ if and only if the following relation holds for any $P\in \mathfrak{D}_{2}$. 
\begin{equation*} \begin{split} &{\rm sup}\{a(P,X,\Delta+G)|\,G\geq0, K_{X}+\Delta+G{\rm \; is\;}\mathbb{R}{\rm \mathchar`-Cartier}\}\\ =&{\rm inf}\{a(P,X,\Delta-G')|\,G'\geq0, K_{X}+\Delta-G'{\rm \; is\;}\mathbb{R}{\rm \mathchar`-Cartier}\}. \end{split} \end{equation*}
\end{enumerate}
\end{thm}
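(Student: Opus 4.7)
I would reduce directly to Theorem \ref{thmsmalllc}. Fix a log resolution $f\colon Y\to X$ of $\langle X,{\rm Supp}\,\Delta\rangle$ and set $\Gamma=f_*^{-1}\Delta+\sum_i E_i$ with $\{E_i\}$ the set of $f$-exceptional prime divisors on $Y$. Take $\mathfrak{D}_1$ to be the union of $\{E_i\}$ and, for each (of the finitely many) non-divisorial lc centers $S$ of $(Y,\Gamma)$, one chosen prime divisor $Q_S$ over $X$ with $c_Y(Q_S)=S$ and $a(Q_S,Y,\Gamma)=-1$ (e.g.\ the exceptional divisor of the blow-up of $Y$ along $S$). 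This is finite. By Theorem \ref{proppairdiscrepancy} the hypothesis translates to $\alpha(P,X,\Delta)\ge -1$ for every $P\in\mathfrak{D}_1$, giving the first bullet of Theorem \ref{thmsmalllc}. The second bullet also holds, because every lc center of $(Y,\Gamma)$ is either a divisorial center (which is either an $f$-exceptional prime in $\{E_i\}$ or the strict transform of a $\lfloor\Delta\rfloor$-component, whose $\alpha$-discrepancy equals $-1$ by Lemma \ref{lembasic}(ii)) or a non-divisorial stratum witnessed by $Q_S$. Theorem \ref{thmsmalllc} then yields a small lc modification, and the ``in particular'' finite-generation statement follows from the argument of Theorem \ref{thmfinite}, which needs only the existence of a small lc modification.

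\textbf{Plan for Part (2), forward direction and setup.} The forward direction is immediate: on an affine $U\ni x$ over which $K_X+\Delta$ is $\mathbb{R}$-Cartier and lc, taking $B_U=0$ in Definition \ref{defnalmostdiscrepancy} (and its infimum analogue) gives ${\rm sup}\,a(P,X,\Delta+G)=a(P,X,\Delta)={\rm inf}\,a(P,X,\Delta-G')$ for every prime $P$ with $c_X(P)\cap U\ne\emptyset$, regardless of how $\mathfrak{D}_2$ is chosen. For the reverse direction, let $h\colon(W,\Delta_W)\to X$ be the given small lc modification; by uniqueness of lc modifications, ``$K_X+\Delta$ is $\mathbb{R}$-Cartier and $(X,\Delta)$ is lc in a neighborhood of $x$'' is equivalent to ``$h$ is an isomorphism over some neighborhood of $x$''. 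For any admissible $G\ge 0$, smallness of $h$ gives $h^{*}(K_X+\Delta+G)=K_W+\Delta_W+h^{-1}_{*}G$, so $h^{-1}_{*}G$ is automatically $\mathbb{R}$-Cartier on $W$ and
$$a(P,X,\Delta+G)\;=\;a(P,W,\Delta_W)-{\rm mult}_P(g^{*}h^{-1}_{*}G)$$
for any resolution $g\colon Y\to W$ realizing $P$, with a parallel formula for $\Delta-G'$. Hence
$${\rm sup}\,a(P,X,\Delta+G)\;\leq\;a(P,W,\Delta_W)\;\leq\;{\rm inf}\,a(P,X,\Delta-G'),$$
which the hypothesis sup $=$ inf on $\mathfrak{D}_2$ collapses to equalities.

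\textbf{Plan for Part (2), main argument and obstacle.} Take $g\colon Y\to W$ to be a log resolution of $(W,\Delta_W)$ that factors through the blow-up of each positive-dimensional irreducible component $T$ of $h^{-1}(x)$ and of one chosen curve $C_T\subset T$, and let $\mathfrak{D}_2$ consist of the (finitely many) $g$-exceptional primes on $Y$ whose center on $W$ is contained in $h^{-1}(x)$. Assume for contradiction that $h$ is not an isomorphism near $x$. Since $K_W+\Delta_W$ is $h$-ample one can choose each $C_T$ with $(K_W+\Delta_W)\cdot C_T=\eta_T>0$, and by construction there is $E_T\in\mathfrak{D}_2$ with $c_W(E_T)=C_T$, so ${\rm mult}_{E_T}(g^{*}D)={\rm mult}_{C_T}(D)$ for every $\mathbb{R}$-Cartier effective $D$ on $W$. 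Intersecting with $C_T$ yields $h^{-1}_{*}G\cdot C_T=-\eta_T<0$, forcing $C_T$ to lie in the support of $h^{-1}_{*}G$ with positive multiplicity along some divisorial component. The \emph{principal obstacle} is to upgrade this pointwise non-vanishing into a \emph{uniform} positive lower bound ${\rm mult}_{C_T}(h^{-1}_{*}G)\ge\delta_T>0$ independent of the admissible $G$; my plan is to obtain $\delta_T$ by enlarging $\mathfrak{D}_2$ with additional prime divisors on $Y$ that record the finitely many candidate prime components of $W$ through $C_T$ and their $C_T$-intersection numbers, so that the rigid constraint $h^{-1}_{*}G\cdot C_T=-\eta_T$ combined with this finite bookkeeping produces such a $\delta_T$. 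Once $\delta_T$ is in hand, Theorem \ref{proppairdiscrepancy} gives ${\rm sup}\,a(E_T,X,\Delta+G)\le a(E_T,W,\Delta_W)-\delta_T<a(E_T,W,\Delta_W)$, contradicting the sup $=$ inf hypothesis at $E_T$. A more intrinsic alternative, parallel to the paper's indicated treatment of Theorem \ref{thm1.7} via \cite{bdffu}, is to interpret the sup $=$ inf condition on a sufficiently rich $\mathfrak{D}_2$ as saying that $K_X+\Delta$ is \emph{numerically Cartier} near $x$; together with the $h$-ampleness of $K_W+\Delta_W$ this again forces $h$ to be an isomorphism near $x$.
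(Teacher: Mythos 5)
Your part (1) is correct and is essentially the paper's argument: the paper takes the same $f\colon Y\to X$ and $\Gamma$, and puts into $\mathfrak{D}_1$ the $f$-exceptional primes together with one witness $E'_S$ for \emph{every} lc center $S$ of $(Y,\Gamma)$, whereas you handle the divisorial centers separately via Lemma \ref{lembasic}(ii); both reduce to Theorem \ref{thmsmalllc} via Theorem \ref{proppairdiscrepancy}. The forward direction of (2) and the sandwich $\sup_G a(P,X,\Delta+G)\le a(P,W,\Delta_W)\le \inf_{G'}a(P,X,\Delta-G')$ are also fine and match the paper.

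The genuine gap is exactly where you flag it, and your proposed repair does not work. From $h_*^{-1}G\cdot C_T=-\eta_T$ you get $\sum_j c_j\,(D_j\cdot C_T)\le-\eta_T$, where $D_j$ runs over the components of $h_*^{-1}G$ containing $C_T$ and $c_j>0$ are their coefficients; this yields a lower bound on $\sum_j c_j$ (hence on $\mathrm{mult}_{C_T}(h_*^{-1}G)$) only after dividing by $\max_j|D_j\cdot C_T|$. But as $G$ ranges over all admissible divisors, the $D_j$ range over the \emph{infinitely many} prime divisors of $W$ through $C_T$, with no a priori bound on $|D_j\cdot C_T|$, so there is no ``finite bookkeeping'' of candidate components and $\inf_G\mathrm{mult}_{C_T}(h_*^{-1}G)$ could a priori be $0$. (Your alternative via numerically Cartier divisors has the analogous defect: Lemma \ref{lemnumercartier} requires the sup $=$ inf equality for \emph{all} prime divisors over $X$, not a finite set.) The paper circumvents this entirely. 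It takes $\mathfrak{D}_2$ to be all $f$-exceptional primes of a log resolution $f\colon Y\to X$ of $\langle X,\mathrm{Supp}\,\Delta\rangle$ with $f^{-1}(x)$ SNC and $g\colon Y\to W$ a morphism, whose center on $X$ contains $x$. The hypothesis together with Theorem \ref{proppairdiscrepancy} produces, for each $n$, a divisor $G_n\ge0$ with $\beta_{n,P}:=a(P,W,\Delta_W)-a(P,X,\Delta+G_n)\in[0,\tfrac{1}{n}]$ for all $P\in\mathfrak{D}_2$ simultaneously, whence $g^*(K_W+\Delta_W)\sim_{\mathbb{R},X}-f_*^{-1}G_n-\sum_P\beta_{n,P}P$. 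Intersecting with a sufficiently general curve $\xi\subset f^{-1}(x)$ such that $g(\xi)$ is a curve, the term $-f_*^{-1}G_n\cdot\xi$ is $\le0$ because the general $\xi$ is not contained in $\mathrm{Supp}\,f_*^{-1}G_n$ (no multiplicity bound needed), and $\sum_P\beta_{n,P}(P\cdot\xi)\to0$ since the $(P\cdot\xi)$ are finitely many fixed numbers; hence $g^*(K_W+\Delta_W)\cdot\xi\le0$, contradicting the $h$-ampleness of $K_W+\Delta_W$. Replacing your fixed curve $C_T\subset W$ by a moving general curve in $f^{-1}(x)\subset Y$, so that the strict transform of $G_n$ only needs to meet it non-negatively, is the idea your plan is missing.
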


\begin{proof}
First, we prove (1). 
Let $f\colon Y\to X$ be a log resolution of  $\langle X,{\rm Supp}\Delta \rangle$. 
Let $\{E_{i}\}_{i}$ be the set of all $f$-exceptional prime divisors. 
We set $\Gamma=f_{*}^{-1}\Delta+\sum_{i}E_{i}$.  
For any lc center $S$ of $(Y,\Gamma)$, fix a prime divisor $E'_{S}$ over $X$ such that the center of $E'_{S}$ on $Y$ is $S$ and $a(E'_{S},Y,\Gamma)=-1$. 
We set $\mathfrak{D}_{1}=\{E_{i}\}_{i}\cup \{E'_{S}\}_{S}$, where $S$ runs over all lc centers of $(Y,\Gamma)$. 
Then $\mathfrak{D}_{1}$ satisfies the condition of Theorem \ref{thm5.1} (1). 
Indeed, if 
$${\rm sup}\{a(P,X,\Delta+G)|\,G\geq0, K_{X}+\Delta+G{\rm \; is\;}\mathbb{R}{\rm \mathchar`-Cartier}\}\geq-1$$ 
for any $P\in \mathfrak{D}_{1}$, by Theorem \ref{proppairdiscrepancy}, the morphism $f\colon (Y,\Gamma)\to\langle X,\Delta\rangle $ satisfies the conditions of Theorem \ref{thmsmalllc}. 
So, $(Y,\Gamma)$ has the log canonical model $(W,\Delta_{W})$ over $X$ such that the induced morphism $h\colon W\to X$ is small. 
By Definition \ref{defnlcmodification}, $(W,\Delta_{W})$ is a small lc modification of  $\langle X,\Delta \rangle$. 
Thus, we complete the proof of (1). 

Next, we prove (2). 
Let $h\colon(W,\Delta_{W})\to \langle X,\Delta \rangle$ be a small lc modification. 
Note that $\Delta_{W}=h_{*}^{-1}\Delta$. 
Let $f\colon Y\to X$ be a log resolution of  $\langle X,{\rm Supp}\Delta \rangle$ such that $f^{-1}(x)$ is a simple normal crossing divisor and the induced map $g\colon Y\dashrightarrow W$ is a morphism. 
Let $\mathfrak{D}_{2}$ be the set of all $f$-exceptional prime divisors whose centers on $X$ contain $x$. 
We prove that $\mathfrak{D}_{2}$ satisfies the condition of Theorem \ref{thm5.1} (2). 
Since Theorem \ref{thm5.1} (2) is a local problem, shrinking $X$, we may assume that $\mathfrak{D}_{2}$ is the set of all $f$-exceptional prime divisors. 
Suppose that $K_{X}+\Delta$ is $\mathbb{R}$-Cartier and $(X,\Delta )$ is lc in a neighborhood of $x$. 
By shrinking $X$ again, we can assume $(X,\Delta)$ is lc. 
Then, as in the proof of Lemma \ref{lembasic}, for any $P\in \mathfrak{D}_{2}$, we obtain
\begin{equation*}
\begin{split}
&{\rm sup}\{a(P,X,\Delta+G)|\,G\geq0, K_{X}+\Delta+G{\rm \; is\;}\mathbb{R}{\rm \mathchar`-Cartier}\}\\
=&{\rm inf}\{a(P,X,\Delta-G')|\,G'\geq0, K_{X}+\Delta-G'{\rm \; is\;}\mathbb{R}{\rm \mathchar`-Cartier}\}\\
=&a(P,X,\Delta)\geq-1. 
\end{split}
\end{equation*}
Therefore, the first condition implies the second condition. 

Conversely, suppose that the equation as in Theorem \ref{thm5.1} (2) holds for all $P\in\mathfrak{D}_{2}$. 
We check
$a(P,W,\Delta_{W})=\alpha(P,X,\Delta)$ for any $P\in\mathfrak{D}_{2}$, where $\alpha(\,\cdot\,,X,\Delta)$ is as in Definition \ref{defnalmostdiscrepancy}. 
By Theorem \ref{proppairdiscrepancy}, we only have to show
\begin{equation}\tag{$*$}
\begin{split}
&{\rm sup}\{a(P,X,\Delta+G)|\,G\geq0, K_{X}+\Delta+G{\rm \; is\;}\mathbb{R}{\rm \mathchar`-Cartier}\}\\\leq& a(P,W,\Delta_{W})\\
\leq&{\rm inf}\{a(P,X,\Delta-G')|\,G'\geq0, K_{X}+\Delta-G'{\rm \; is\;}\mathbb{R}{\rm \mathchar`-Cartier}\}
\end{split}
\end{equation}
for any $P\in\mathfrak{D}_{2}$. 
We only show the first relation because the second relation can be obtained similarly. 
Since $h$ is small, for any $G\geq0$ on $X$ such that $K_{X}+\Delta+G$ is $\mathbb{R}$-Cartier, we have $h^{*}(K_{X}+\Delta+G)-(K_{W}+\Delta_{W})\geq0$. 
By \cite[Lemma 2.27]{kollar-mori}, we have $a(P,X,\Delta+G)\leq a(P,W,\Delta_{W})$ for any $G$ and $P\in\mathfrak{D}_{2}$. 
Therefore, the first relation of ($*$) holds. 
Thus, we see that the relation ($*$) holds. 
In this way, we have $a(P,W,\Delta_{W})=\alpha(P,X,\Delta)$ for any $P\in\mathfrak{D}_{2}$.  

From now on, we prove that $K_{X}+\Delta$ is $\mathbb{R}$-Cartier and $(X,\Delta )$ is lc near $x$. 
Since $(W,\Delta_{W})$ is lc by construction, it is sufficient to prove that the morphism $h\colon W\to X$ is an isomorphism over $x$. 
To prove this, we only have to show that $h^{-1}(x)$ is a point. 
Suppose by contradiction that $h^{-1}(x)$ is not a point. 
Then $h^{-1}(x)$ contains a curve. 
We may write
$K_{Y}+f_{*}^{-1}\Delta-\sum_{P\in\mathfrak{D}_{2}}a(P,W,\Delta_{W})P=g^{*}(K_{W}+\Delta_{W}).$
Since $a(P,W,\Delta_{W})=\alpha(P,X,\Delta)$, for any integer $n>0$, we can find $G_{n}\geq0$ on $X$ such that $K_{X}+\Delta+G_{n}$ is $\mathbb{R}$-Cartier and 
$a(P,W,\Delta_{W})-a(P,X,\Delta+G_{n})\in[0, \frac{1}{n}]$
for any $P\in\mathfrak{D}_{2}$ (Theorem \ref{proppairdiscrepancy}).
Therefore, if we put $\beta_{n,P}=a(P,W,\Delta_{W})-a(P,X,\Delta+G_{n})$ for any $P\in\mathfrak{D}_{2}$, we have $0\leq \beta_{n,P} \leq \tfrac{1}{n}$. 
Moreover, we can write
\begin{equation*}
\begin{split}
g^{*}(K_{W}+\Delta_{W})&=K_{Y}+f_{*}^{-1}\Delta-\sum_{P\in\mathfrak{D}_{2}}\bigl(a(P,X,\Delta+G_{n})+\beta_{n,P}\bigr)P\\
&=f^{*}(K_{X}+\Delta +G_{n})-f_{*}^{-1}G_{n}-\sum_{P\in\mathfrak{D}_{2}}\beta_{n,P}P\\
&\sim_{\mathbb{R},X}-f_{*}^{-1}G_{n}-\sum_{P\in\mathfrak{D}_{2}}\beta_{n,P}P.
\end{split}
\end{equation*}
Now we recall that $g^{-1}(h^{-1}(x))=f^{-1}(x)$ is a simple normal crossing divisor and $h^{-1}(x)$ contains a curve. 
Pick any sufficiently general curve $\xi\subset f^{-1}(x)$ such that $g(\xi)$ is a curve on $W$. 
Then, for any $n$, we obtain 
$$(g^{*}(K_{W}+\Delta_{W}))\,\cdot \,\xi=-(f_{*}^{-1}G_{n}\,\cdot\,\xi)-\sum_{P\in\mathfrak{D}_{2}}\beta_{n,P}(P\,\cdot\, \xi)\leq-\sum_{P\in\mathfrak{D}_{2}}\beta_{n,P}(P\,\cdot\, \xi).$$
Since $0\leq \beta_{n,P} \leq \tfrac{1}{n}$, considering the limit $n\to \infty$, we have $(g^{*}(K_{W}+\Delta_{W}))\,\cdot\, \xi\leq0$. 
Now recall that $(W,\Delta_{W})$ is an lc modification of $\langle X,\Delta \rangle$. 
So $K_{W}+\Delta_{W}$ is ample over $X$. 
Since $g(\xi)$ is a curve on $W$ and $f(\xi)=x$, we have $(g^{*}(K_{W}+\Delta_{W}))\,\cdot \,\xi>0$. 
In this way, we get a contradiction. 

In this way, we see that $h^{-1}(x)$ is a point. 
So $h$ is an isomorphism over $x$, and $K_{X}+\Delta$ is $\mathbb{R}$-Cartier and $(X,\Delta )$ is lc near $x$. 
Thus we complete the proof. 
\end{proof}

\begin{cor}\label{corlccriterion}
Let $\langle X,\Delta \rangle$ be a pair such that $X$ is quasi-projective. 
Then $K_{X}+\Delta$ is $\mathbb{R}$-Cartier and $( X,\Delta )$ is lc if and only if $\langle X,\Delta \rangle$ is pseudo-lc and the following equation holds for any prime divisor $P$ over $X$. 
\begin{equation*}
\begin{split}
&{\rm sup}\{a(P,X,\Delta+G)|\,G\geq0, K_{X}+\Delta+G{\rm \; is\;}\mathbb{R}{\rm \mathchar`-Cartier}\}\\
=&{\rm inf}\{a(P,X,\Delta-G')|\,G'\geq0, K_{X}+\Delta-G'{\rm \; is\;}\mathbb{R}{\rm \mathchar`-Cartier}\}.
\end{split}
\end{equation*} 
\end{cor}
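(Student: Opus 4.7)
The plan is to prove the two directions separately, both by direct reduction to results already established in the paper.

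For the easy direction, assume $K_X+\Delta$ is $\mathbb{R}$-Cartier and $(X,\Delta)$ is lc. By Lemma \ref{lembasic}(i) applied with $\alpha(P,X,\Delta)=a(P,X,\Delta)\geq -1$, the pair $\langle X,\Delta\rangle$ is pseudo-lc. Now fix a prime divisor $P$ over $X$. For any $G\geq 0$ with $K_X+\Delta+G$ $\mathbb{R}$-Cartier, $G$ itself is $\mathbb{R}$-Cartier, and so for any birational model $f\colon Y\to X$ containing $P$,
\[
a(P,X,\Delta+G)=a(P,X,\Delta)-\operatorname{coeff}_P(f^{*}G)\leq a(P,X,\Delta),
\]
with equality attained at $G=0$. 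The symmetric computation with $G'\geq 0$ yields $a(P,X,\Delta-G')\geq a(P,X,\Delta)$, with equality at $G'=0$. Thus both the supremum and the infimum equal the common value $a(P,X,\Delta)$, which gives the desired equality.

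For the interesting direction, assume $\langle X,\Delta\rangle$ is pseudo-lc and the displayed equation holds for every prime divisor $P$ over $X$. Since $\langle X,\Delta\rangle$ is pseudo-lc, Theorem \ref{thmsmalllc} applies and produces a small lc modification $h\colon (W,\Delta_W)\to X$. The hypothesis of Theorem \ref{thm5.1}(2) is thereby met at every closed point $x\in X$: the finite set $\mathfrak{D}_2$ supplied by that theorem is automatically covered by our global assumption. Theorem \ref{thm5.1}(2) then concludes that $K_X+\Delta$ is $\mathbb{R}$-Cartier and $(X,\Delta)$ is lc in a neighborhood of $x$. Since this holds at every closed point of $X$, the $\mathbb{R}$-Cartier property and log canonicity spread to all of $X$, completing the proof.

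There is essentially no obstacle: the entire content is already packaged in Theorem \ref{thm5.1}(2) (whose proof in turn relies on Theorem \ref{thmsmalllc} and hence on the relative MMP of Theorem \ref{thmrelmmp}). The role of Corollary \ref{corlccriterion} is to strip the local quantifier ``in a neighborhood of $x$'' from Theorem \ref{thm5.1}(2) by imposing the discrepancy identity for all prime divisors over $X$ simultaneously, together with the global pseudo-lc assumption that ensures existence of the small lc modification on all of $X$.
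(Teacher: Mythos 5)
Your proof is correct and takes essentially the same route as the paper, whose entire proof of this corollary is ``It follows from Theorem \ref{thm5.1}'': you simply spell out that reduction (the easy direction via Lemma \ref{lembasic}, the hard direction by producing the small lc modification and then invoking the local criterion of Theorem \ref{thm5.1}(2) at every closed point). The only detail worth adding is that Theorem \ref{thmsmalllc} and Theorem \ref{thm5.1} are stated for a boundary $\mathbb{R}$-divisor $\Delta$, which in your setting follows from the pseudo-lc hypothesis via Lemma \ref{lembasic}(ii).
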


\begin{proof}
It follows from Theorem \ref{thm5.1}. 
\end{proof}

In the rest of this paper, we give an other proof of the Corollary \ref{corlccriterion}. 
We prove it using the notion of numerically Cartier divisors (see \cite[Definition 5.2]{bdffu}) and the minimal model theory. 

First, we recall the notion of numerically Cartier divisors. 

\begin{defn}[{\cite[Definition 5.2]{bdffu}}, see also {\cite[Definition 2.26]{bdff}}]
Let $X$ be a normal variety, and let $D$ be an $\mathbb{R}$-divisor on it. 
Then $D$ is {\em numerically} {\em Cartier} if there is a resolution $f\colon Y\to X$ and an $\mathbb{R}$-divisor $D_{Y}$ on $Y$ such that $D_{Y}$ is numerically trivial over $X$ and $f_{*}D_{Y}=D$. 
\end{defn}

The following lemma connects the notion of numerically Cartier divisors and the usual discrepancy. 

\begin{lem}\label{lemnumercartier}
Let $\langle X,\Delta \rangle$ be a pair such that $X$ is quasi-projective. 
Then, $K_{X}+\Delta$ is numerically Cartier if and only if for any prime divisor $P$ over $X$, the following equality holds: 
\begin{equation*}
\begin{split}
&{\rm sup}\{a(P,X,\Delta+G)|\,G\geq0, K_{X}+\Delta+G{\rm \; is\;}\mathbb{R}{\rm \mathchar`-Cartier}\}\\
=&{\rm inf}\{a(P,X,\Delta-G')|\,G'\geq0, K_{X}+\Delta-G'{\rm \; is\;}\mathbb{R}{\rm \mathchar`-Cartier}\}.
\end{split}
\end{equation*} 
\end{lem}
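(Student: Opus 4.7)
The plan is to express both sides of the conjectured equality as coefficients, at the chosen prime divisor $P$ on a birational model $f\colon Y\to X$ carrying $P$, of the traces of the two nef envelopes $\mathrm{Env}_{X}(\pm(K_{X}+\Delta))$, and then to invoke the characterization that an $\mathbb{R}$-divisor $D$ on $X$ is numerically Cartier if and only if $\mathrm{Env}_{X}(D)+\mathrm{Env}_{X}(-D)=0$ as $b$-divisors (see \cite[Proposition 2.12]{bdff} and \cite[Section 5]{bdffu}).

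For the supremum, Theorem \ref{proppairdiscrepancy} identifies it with $\alpha(P,X,\Delta)$, and Theorem \ref{thmnefenvelope} then gives
\[
\sup\bigl\{a(P,X,\Delta+G)\bigm|G\geq 0,\,K_{X}+\Delta+G\text{ is $\mathbb{R}$-Cartier}\bigr\}={\rm coeff}_{P}(K_{Y})+{\rm coeff}_{P}\bigl((\mathrm{Env}_{X}(-(K_{X}+\Delta)))_{Y}\bigr).
\]
The core step is to establish the dual identity
\[
\inf\bigl\{a(P,X,\Delta-G')\bigm|G'\geq 0,\,K_{X}+\Delta-G'\text{ is $\mathbb{R}$-Cartier}\bigr\}={\rm coeff}_{P}(K_{Y})-{\rm coeff}_{P}\bigl((\mathrm{Env}_{X}(K_{X}+\Delta))_{Y}\bigr),
\]
which mirrors Proposition \ref{propdiscrepancy}, Theorem \ref{proppairdiscrepancy} and Theorem \ref{thmnefenvelope} with the signs reversed. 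I would treat the $\mathbb{Q}$-divisor case directly via the $v_{P}^{\natural}$-computation of Proposition \ref{propdiscrepancy} applied to $K_{X}+\Delta-G'$ in place of $K_{X}+\Delta+G$, and then extend to $\mathbb{R}$-divisors by the convex-geometry approximation of Step \ref{step2discre} of the proof of Theorem \ref{proppairdiscrepancy}, combined with continuity of nef-envelope traces under $\mathbb{R}$-linear perturbations. Effectivity of $\Delta-G'$ plays no role; discrepancies are interpreted in the sub-pair sense. Subtracting the two displays, the lemma's equation for every $P$ over $X$ is equivalent to $\mathrm{Env}_{X}(K_{X}+\Delta)+\mathrm{Env}_{X}(-(K_{X}+\Delta))=0$ as $b$-divisors, at which point the cited criterion closes the argument.

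The hard part is thus the dual identity: one must transcribe \emph{verbatim} the machinery behind Theorem \ref{thmnefenvelope} to the sign-reversed side, which requires reworking Proposition \ref{propdiscrepancy} and Step \ref{step2discre} of Theorem \ref{proppairdiscrepancy} simultaneously. A minor technical nuisance is to guarantee the existence of at least one $G'\geq 0$ with $K_{X}+\Delta-G'$ being $\mathbb{R}$-Cartier: if none exists the infimum is $+\infty$, which forces the (always finite) supremum to be $+\infty$, giving an automatic contradiction precluding the equality. Under the quasi-projectivity of $X$ this non-degeneracy is standard in the cases of interest, so the proof proceeds cleanly after the dual identity is in hand.
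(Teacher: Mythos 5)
Your proposal is correct and follows essentially the same route as the paper: identify the supremum with $K_{Y}+({\rm Env}_{X}(-(K_{X}+\Delta)))_{Y}$ via Theorem \ref{proppairdiscrepancy} and Theorem \ref{thmnefenvelope}, obtain the sign-reversed identity $K_{Y}-({\rm Env}_{X}(K_{X}+\Delta))_{Y}$ for the infimum by the same arguments, and reduce to the criterion that $K_{X}+\Delta$ is numerically Cartier if and only if ${\rm Env}_{X}(K_{X}+\Delta)+{\rm Env}_{X}(-(K_{X}+\Delta))=0$, as in the proof of \cite[Proposition 5.9]{bdffu}. The paper likewise dispatches the dual identity with ``by the same arguments,'' so your explicit flagging of that step (and of the degenerate case where no $G'$ exists) only makes the writeup more careful, not different.
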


\begin{proof}
We use notations in \cite{bdff}. 
Let ${\rm Env}_{X}(\,\cdot\,)$ be the nef envelope.  
We set 
\begin{equation*}
\begin{split}
\alpha'(P,X,\Delta)={\rm inf}\{a(P,X,\Delta-G')|\,G'\geq0, K_{X}+\Delta-G'{\rm \; is\;}\mathbb{R}{\rm \mathchar`-Cartier}\}, 
\end{split}
\end{equation*} 
and for any log resolution $f\colon Y \to X$ of  $\langle X,{\rm Supp}\Delta \rangle$, we put $D_{Y}=\sum_{P}\alpha(P,X,\Delta)$ and $D'_{Y}=\sum_{P}\alpha'(P,X,\Delta)$, where $P$ runs over all prime divisors on $Y$. 
Then, we have $D_{Y}=K_{Y}+({\rm Env}_{X}(-(K_{X}+\Delta)))_{Y}$ by Theorem \ref{proppairdiscrepancy} and Theorem \ref{thmnefenvelope}. 
By the same arguments, we also obtain $D'_{Y}=K_{Y}-({\rm Env}_{X}(K_{X}+\Delta))_{Y}$.
Therefore, the equality 
\begin{equation*}
\begin{split}
&{\rm sup}\{a(P,X,\Delta+G)|\,G\geq0, K_{X}+\Delta+G{\rm \; is\;}\mathbb{R}{\rm \mathchar`-Cartier}\}\\
=&{\rm inf}\{a(P,X,\Delta-G')|\,G'\geq0, K_{X}+\Delta-G'{\rm \; is\;}\mathbb{R}{\rm \mathchar`-Cartier}\}
\end{split}
\end{equation*} is equivalent to ${\rm Env}_{X}(-(K_{X}+\Delta))=-{\rm Env}_{X}(K_{X}+\Delta)$ as $b$-$\mathbb{R}$-divisors. 
But this is equivalent to that $K_{X}+\Delta$ is numerically Cartier. 
For details, see the proof of \cite[Proposition 5.9]{bdffu}. 
Note that the proof of \cite[Proposition 5.9]{bdffu} is carried out with $\mathbb{Q}$-divisors, but the argument works for $\mathbb{R}$-divisors without any change. 
\end{proof}

By Lemma \ref{lemnumercartier}, Corollary \ref{corlccriterion} is equivalent to the following statement, which is an lc analog of \cite[Corollary 5.17]{bdffu}. 

\begin{thm}
Let $\langle X,\Delta \rangle$ be a pair such that $X$ is quasi-projective. 
Suppose that $K_{X}+\Delta$ is numerically Cartier. 
Suppose in addition that for any log resolution $f \colon Y\to X$ of $\langle X,\Delta \rangle$, the coefficient of any $P$ in $K_{Y}+({\rm Env}_{X}(-(K_{X}+\Delta)))_{Y}$ is not less
than $-1$. 

Then, $K_{X}+\Delta$ is $\mathbb{R}$-Cartier and $(X,\Delta)$ is lc. 

\end{thm}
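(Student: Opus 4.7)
The plan is to combine the small lc modification produced by Theorem \ref{thmsmalllc} with the numerical Cartier hypothesis to force that modification to be an isomorphism. First, by Theorem \ref{thmnefenvelope}, for any prime divisor $P$ on a log resolution $f\colon Y\to X$ the coefficient of $P$ in $K_{Y}+({\rm Env}_{X}(-(K_{X}+\Delta)))_{Y}$ equals $\alpha(P,X,\Delta)$, and since every prime divisor over $X$ appears on some log resolution, the coefficient hypothesis says exactly that $\langle X,\Delta\rangle$ is pseudo-lc. Applying Theorem \ref{thmsmalllc}, we obtain a small lc modification $h\colon (W,\Delta_{W})\to X$ with $\Delta_{W}=h_{*}^{-1}\Delta$ and $K_{W}+\Delta_{W}$ being $h$-ample.

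Next I would exploit numerical Cartierness on a common log resolution $p\colon Y\to X$, $q\colon Y\to W$ with $p=h\circ q$. The numerically Cartier property is preserved under passage to a higher model via pullback, so we may assume there is an $\mathbb{R}$-divisor $D_{Y}$ on $Y$ that is $\mathbb{R}$-Cartier, numerically trivial over $X$, and satisfies $p_{*}D_{Y}=K_{X}+\Delta$. Since $h$ has no exceptional divisors, $q_{*}D_{Y}$ coincides with the strict transform $h_{*}^{-1}(K_{X}+\Delta)=K_{W}+\Delta_{W}$; and since every $q$-fiber sits inside a $p$-fiber, $D_{Y}$ is also numerically trivial over $W$. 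Setting $F=D_{Y}-q^{*}(K_{W}+\Delta_{W})$, we see that $F$ is $q$-exceptional and $q$-numerically trivial, so the negativity lemma applied to $F$ and to $-F$ forces $F=0$. Hence $q^{*}(K_{W}+\Delta_{W})=D_{Y}$ is numerically trivial over $X$.

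Finally, I would rule out positive-dimensional fibers of $h$. Suppose some fiber $h^{-1}(x)$ contains a positive-dimensional irreducible component $Z\subset W$. Then $q^{-1}(Z)\subset p^{-1}(x)$, so $q^{*}(K_{W}+\Delta_{W})=D_{Y}$ is numerically trivial along every curve in $q^{-1}(Z)$. For any curve $C\subset Z$, pick an irreducible component of $q^{-1}(C)$ dominating $C$ and cut it with generic hyperplanes down to a curve $C'\subset Y$ with $q|_{C'}\colon C'\to C$ finite surjective; the projection formula then yields $(K_{W}+\Delta_{W})\cdot C=\tfrac{1}{\deg(q|_{C'})}\,D_{Y}\cdot C'=0$, contradicting the $h$-ampleness of $K_{W}+\Delta_{W}$. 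Therefore all fibers of $h$ are zero-dimensional; since $h$ is projective birational onto the normal variety $X$, Zariski's main theorem identifies it with an isomorphism, giving both the $\mathbb{R}$-Cartierness of $K_{X}+\Delta$ and the lc property of $(X,\Delta)$.

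The main obstacle I expect is the middle step: realizing the numerical triviality on the common log resolution $Y$ (which requires transporting the witnessing divisor via pullback), and confirming that the $\mathbb{R}$-Cartier hypothesis of the negativity lemma is met on the possibly non-$\mathbb{Q}$-factorial variety $W$. Once $D_{Y}=q^{*}(K_{W}+\Delta_{W})$ is obtained, the final contradiction is a standard intersection-theoretic argument. The deeper MMP ingredients, namely Theorem \ref{thm1.3} and Lemma \ref{lemneflogabund}, are invoked implicitly through Theorem \ref{thmsmalllc}, as suggested by the paper's parenthetical remark.
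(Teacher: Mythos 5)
Your argument is correct, but it is organized quite differently from the paper's proof of this theorem. You first translate the coefficient hypothesis into pseudo-lc-ness via Theorem \ref{thmnefenvelope}, invoke the small lc modification $h\colon (W,\Delta_{W})\to X$ from Theorem \ref{thmsmalllc}, and then use the numerically Cartier hypothesis together with the negativity lemma on a common resolution to show that $q^{*}(K_{W}+\Delta_{W})$ is numerically trivial over $X$, which contradicts $h$-ampleness unless $h$ is an isomorphism. The paper instead never forms the small lc modification here: it works directly on one log resolution, writes $K_{Y}+f_{*}^{-1}\Delta+E_{+}-E_{-}=-({\rm Env}_{X}(-(K_{X}+\Delta)))_{Y}$, observes that numerical Cartierness makes this divisor numerically trivial over $X$, runs the $(K_{Y}+f_{*}^{-1}\Delta+E_{+})$-MMP over $X$, kills $E_{-}$ with \cite[Lemma 3.3]{birkar-flip} once the log canonical divisor is a limit of movable divisors, and then applies Theorem \ref{thmrelmmp} to get relative semi-ampleness, so that the relative lc model is $X$ itself. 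Your route is in effect a hybrid of the paper's two arguments: it reuses the machinery of Theorem \ref{thm5.1}(2) (small modification plus intersection with curves in fibers of $h$), but replaces the $\beta_{n,P}$-approximation there by the cleaner observation that the numerically Cartier witness $D_{Y}$ must equal $q^{*}(K_{W}+\Delta_{W})$ by the negativity lemma. Both proofs ultimately rest on Theorem \ref{thmrelmmp} (yours through Theorem \ref{thmsmalllc}); yours avoids re-running an MMP and isolates exactly where each hypothesis enters, while the paper's avoids the bookkeeping with the common resolution. Two small points worth making explicit in your write-up: pseudo-lc-ness forces $\Delta$ to be a boundary (Lemma \ref{lembasic}(ii)), which is needed to apply Theorem \ref{thmsmalllc}; and the identification $q_{*}D_{Y}=K_{W}+\Delta_{W}$ uses that $h$ small makes $q$-exceptional and $p$-exceptional divisors on $Y$ coincide, so that pushforward along $q$ really recovers the strict transform of $K_{X}+\Delta$.
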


\begin{proof}
Fix a log resolution $f \colon Y\to X$ of $\langle X,\Delta \rangle$. 
Then, we may write 
$$K_{Y}+f_{*}^{-1}\Delta+E_{+}-E_{-}=-({\rm Env}_{X}(-(K_{X}+\Delta)))_{Y},$$
where $E_{+}$ and $E_{-}$ are effective $f$-exceptional $\mathbb{R}$-divisors which have no common components, and $E_{+}$ is a boundary divisor. 
Since $K_{X}+\Delta$ is numerically Cartier, we see that $K_{Y}+f_{*}^{-1}\Delta+E_{+}-E_{-}$ is numerically trivial over $X$. 

We set $\Gamma=f_{*}^{-1}\Delta+E_{+}$. 
Then $(Y,\Gamma)$ is lc. 
We run the $(K_{Y}+\Gamma)$-MMP over $X$ with scaling of an ample divisor. 
After finitely many steps, we reach a model $(Y,\Gamma)\dashrightarrow (Y',\Gamma')$ over $X$ such that $K_{Y'}+\Gamma'$ is the limit of movable divisors over $X$. 
Let $E'_{-}$ be the birational transform of $E_{-}$ on $Y'$. 
By the contraction theorem \cite[Theorem 4.5.2 (4)]{fujino-book}, we see that $K_{Y'}+\Gamma'-E'_{-}$ is numerically trivial over $X$. 
Then, for any sufficiently general curve $\xi$ whose image on $X$ is a point, we have 
$(E'_{-}\,\cdot\, \xi)=(K_{Y'}+\Gamma')\,\cdot\, \xi\geq0$. 
Since $E'_{-}$ is effective and exceptional over $X$, by \cite[Lemma 3.3]{birkar-flip}, we have $E'_{-}=0$. 
Therefore, we see that $K_{Y'}+\Gamma'$ is numerically trivial over $X$. 
By Theorem \ref{thmrelmmp}, $K_{Y'}+\Gamma'$ is semi-ample over $X$. 
So there is the log canonical model $(W,\Delta_{W})$ of $(Y',\Gamma')$ over $X$ such that $W$ is isomorphic to $X$, where $\Delta_{W}$ is the birational transform of $\Gamma'$ on $W$. 
By construction, $\Delta_{W}$ is the birational transform of $\Delta$ on $W$. 
So we see that $K_{X}+\Delta$ is $\mathbb{R}$-Cartier and $(X,\Delta)$ is lc. 
\end{proof}


\end{document}